\newcommand{\Div}{\mathop{\rm div}\nolimits}
\newtheorem{theorem}{Theorem}
\newtheorem{corollary}{Corollary}
\newtheorem{lemma}{Lemma}
\newtheorem{proposition}{Proposition}
\newtheorem{definition}{Definition}
\newtheorem{algorithm}{Algorithm}
\newtheorem{remark}{Remark}
\newcommand{\cT}{\mathcal{T}}
\newcommand{\R}{\mathbb{R}}
\newcommand{\Rn}{{\mathbb{R}^n}}
\newcommand{\N}{\mathbb{N}}
\newcommand{\om}{\Omega }
\newcommand{\pom}{{\partial \Omega} }
\newcommand{\omc}{ {\Omega^c} }
\newcommand{\gs}{{\Gamma_s}}
\newcommand{\gt}{{\Gamma_t}}
\newcommand{\Wgs}{{\widetilde{W}^{\frac{1}{2},2}(\gs)}}
\newcommand{\Whgs}{{\widetilde{W}_h^{\frac{1}{2},2}(\gs)}}
\newcommand{\Whg}{{W_h^{\frac{1}{2},2}(\pom)}}
\newcommand{\Wg}{{W^{\frac{1}{2},2}(\pom)}}
\newcommand{\Wmg}{{W^{-\frac{1}{2},2}(\pom)}}
\newcommand{\Whmg}{{W_h^{-\frac{1}{2},2}(\pom)}}
\newcommand{\Wp}{{W^{1,p}(\om)}}
\newcommand{\Whp}{{W_h^{1,p}(\om)}}
\begin{document}

\title{Adaptive FE--BE Coupling for Strongly Nonlinear Transmission Problems with Coulomb Friction}
\author{H.~Gimperlein, M.~Maischak,
  E.~Schrohe, E.~P.~Stephan}
\maketitle 
\begin{abstract}
\noindent We analyze an adaptive finite element/boundary element
procedure for scalar elastoplastic interface problems involving
friction, where a nonlinear uniformly monotone operator such as the
$p$--Laplacian is coupled to the linear Laplace equation on the
exterior domain. The problem is reduced to a boundary/domain
variational inequality, a discretized saddle point formulation of
which is then solved using the Uzawa algorithm and adaptive mesh
refinements based on a gradient recovery scheme. The Galerkin
approximations are shown to converge to the unique solution of the
variational problem in a suitable product of $L^p$-- and
$L^2$--Sobolev spaces.
\end{abstract}

\section{Introduction}\label{sec:intro}

Consider the following transmission problem on a bounded Lipschitz domain $\om \subset \Rn$:
\begin{align}
-\mathrm{div} \left( \varrho(|\nabla u_1|) \nabla u_1\right)= f \quad \text{in $\Omega$,}
- \Delta u_2 =&0 \quad \text{in $\Omega^c$,} \nonumber\\
\varrho(|\nabla u_1|) \partial_\nu u_1-\partial_\nu u_2= t_0 \quad \text{on $\partial \Omega$,}
u_1-u_2= &u_0\quad \text{on $\Gamma_t$,} \nonumber\\
-\varrho(|\nabla u_1|) \partial_\nu u_1 (u_0+u_2-u_1) +
g|(u_0+u_2-u_1)|=&0, \label{diff}\\
  \left| \varrho(|\nabla u_1|) \partial_\nu u_1\right| \leq g \quad \text{on $\Gamma_s$.}\nonumber\\
u_2(x)=\left\{\begin{array}{l@{}l}
      a+o(1)&,n=2 \\
     \mathcal{O}(|x|^{2-n})&,n>2
      \end{array}\right.\nonumber.&
\end{align}
Here $\varrho(t)$ denotes a function $\varrho(x, t)\in C(\overline{\Omega} \times (0,\infty))$ satisfying $$0 \leq  \varrho(t) \leq \varrho^* [t^\delta (1+t)^{1-\delta}]^{p-2},$$
$$|\varrho(t)t-\varrho(s)s| \leq \varrho^* [(t+s)^\delta (1+t+s)^{1-\delta}]^{p-2}|t-s|$$
and $$\varrho(t)t-\varrho(s)s \geq \varrho_* [(t+s)^\delta
(1+t+s)^{1-\delta}]^{p-2}(t-s)$$ for all $t \geq s >0$ uniformly in
$x \in \Omega$ ($\delta \in [0,1]$, $\varrho_*, \varrho^* >0$). The
interface $\pom = \overline{\gs \cup \gt}$ is divided into the
disjoint components $\gs$ and $\gt \neq \emptyset$, and the data
belong to the following spaces:
$$f\in L^{p'}(\Omega),\ u_0 \in W^{\frac{1}{2},2}(\pom),\ t_0 \in W^{-\frac{1}{2},2}(\pom), \ g \in L^\infty(\gs), \ a \in \R.$$
As usual, the normal derivatives are understood in terms of a Green's formula, and it is convenient to set $a=0$ for $n>2$. In two dimensions one further condition is required to enforce uniqueness:
\begin{equation}\label{uniq}
\int_\Omega f + \langle t_0, 1 \rangle=0.
\end{equation}
We are looking for weak solutions $(u_1,u_2) \in W^{1,p}(\om) \times W^{1,2}_{loc}(\omc)$ when $p \geq 2$.
A typical example is given by $\varrho(t)=[t^\delta (1+t)^{1-\delta}]^{p-2}$, $\delta \in [0,1]$, with the $p$--Laplacian corresponding to the maximally degenerate case $\delta =1$.

In this article we use layer potentials for the Laplace equation on $\omc$ to reduce the system to a uniquely solvable variational problem on $W^{1,p}(\om) \times W^{\frac{1}{2},2}_0(\gs)$. The main idea of our theoretical analysis is simple: Because the traces of $W^{1,p}(\om)$--functions are continuously embedded into $W^{\frac{1}{2},2}(\pom)$ for $p \geq 2$, the quadratic form $\langle S u, u\rangle$ associated to the Steklov--Poincar\'e operator is accessible to Hilbert space methods whenever it is defined. In this slightly weaker setting, Friedrichs' inequality (Prop.~\ref{poinfried}) allows to recover control over the $L^p$--norms in the interior, and as a consequence the full variational functional associated to the above equations is coercive in $W^{1,p}(\om)$.

In the numerical part we present a model problem, which shows
singularities resulting from the given boundary data, as well as from
the change of boundary conditions, leading to a suboptimal convergence
rate for uniform mesh refinements. We also present a Uzawa solver to
deal with the variational inequality.

With the help of a Korn inequality (Prop.~\ref{korn}), our method
easily carries over to transmission problems in nonlinear
elasticity, e.g.~Hencky materials in $\Omega$ coupled to the Lam\'e
equation in $\Omega^c$. A generalization to certain nonconvex energy
functionals will be discussed elsewhere \cite{micro}.

The outline of the article is as follows: Section~\ref{sec:pre}
recalls some properties of $L^p$-Sobolev spaces and introduce a
family of quasinorms adapted to the considered class of operators.
In the following section~\ref{formulation} we introduce the boundary
integral operators and derive our variational formulation.
Section~\ref{sec:uniq} is dedicated to the existence and uniqueness
of our model problem. The discretization of our problem is derived
in section~\ref{sec:disc}, as well as the a-priori error estimates.
In section~\ref{sec:post} our a-posteriori error estimator is
presented and its reliability proven. Finally, in
section~\ref{sec:num} we present the Uzawa-solver and two numerical
examples, clearly underlining our theoretical results.

\section{Preliminaries}\label{sec:pre}

Let $\om$ be an open subset of $\Rn$ with Lipschitz boundary $\pom$. Set $p'=\frac{p}{p-1}$ whenever $p \in (1,\infty)$.

\begin{definition}
The Sobolev spaces $W_{(0)}^{k,p}(\om)$, $k \in \N_0$, are the completion of $C_{(c)}^\infty(\om)$ with respect to the norm $\|u\|_{W^{k,p}(\om)} = \|u\|_{k,p} = \|u\|_p + \sum_{|\gamma|=k} \|\partial^\gamma u\|_p$. The second term in the norm will be denoted by $|u|_{W^{1,p}(\om)} = |u|_{k,p}$. Let $W_0^{-k,p'}(\om) = \left(W^{k,p}(\om)\right)'$ and $W^{-k,p'}(\om) = \left(W_0^{k,p}(\om)\right)'$. $W^{1-\frac{1}{p},p}(\pom)$ denotes the space of traces of $W^{1,p}(\om)$--functions on the boundary. It coincides with the Besov space $B^{1-\frac{1}{p}}_{p,p}(\pom)$ as obtained by real interpolation of Sobolev spaces \cite{tri}, and one may define $W^{s,p}(\pom)=B^{s}_{p,p}(\pom)$ for $s\in (-1,1)$.
\end{definition}

\begin{remark} \label{sobremarks}
We are going to need the following properties for bounded $\pom$ \cite{tri}: \\
a) All the above spaces are reflexive and $\left(W^{s,p}(\pom)\right)'=W^{-s,p'}(\pom)$.\\
b) For $p=2$ they coincide with the Sobolev spaces $H^s$.\\
c) $W^{1-\frac{1}{p},p}(\pom) \hookrightarrow W^{\frac{1}{2},2}(\pom)$ for $p \geq 2$.\\
d) If $\pom$ is smooth, pseudodifferential operators of order $m$ with symbol in the H\"ormander class $S^m_{1,0}(\pom)$ map $W^{s,p}(\pom)$ continuously to $W^{s-m,p}(\pom)$. For Lipschitz $\pom$, at least the first--order Steklov--Poincar\'e operator $S$ of the Laplacian on $\omc$ is continuous between $W^{\frac{1}{2},2}(\pom)$ and $W^{-\frac{1}{2},2}(\pom)$ \cite{cos}. \\
e) Points a) to d) imply that the quadratic form $\langle S u, u \rangle$ associated to $S$ is well-defined on $W^{1-\frac{1}{p},p}(\pom)$ if $p \geq 2$. $S$ being elliptic, the form cannot be defined for $p<2$ even if $\pom$ is smooth.
\end{remark}

Uniform monotony will be shown using a variant of Friedrichs' inequality.
\begin{proposition}\label{poinfried}
Assume $\om$ is bounded and that $\Gamma \subset \pom$ has positive $(n-1)$--dimensional measure. Then there is a $C>0$ such that $$\|u\|_p \leq C( \|\nabla u\|_p + \|u|_{\Gamma}\|_{L^1(\Gamma)}) \quad \text{for all $u \in W^{1,p}(\om)$}.$$
\end{proposition}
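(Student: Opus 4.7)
The plan is to argue by contradiction via compactness, which is the standard route to Poincaré–Friedrichs inequalities of this type. Suppose the claim fails. Then for each $n \in \N$ there exists $u_n \in W^{1,p}(\om)$ with $\|u_n\|_p = 1$ but $\|\nabla u_n\|_p + \|u_n|_\Gamma\|_{L^1(\Gamma)} \to 0$. In particular $(u_n)$ is bounded in $W^{1,p}(\om)$, so after extraction it converges weakly in $W^{1,p}(\om)$ and, by the Rellich–Kondrachov theorem for bounded Lipschitz domains, strongly in $L^p(\om)$ to some limit $u$. The limit satisfies $\|u\|_p = 1$, and since $\nabla u_n \to 0$ strongly in $L^p(\om)$, weak lower semicontinuity of the gradient forces $\nabla u = 0$ distributionally, so $u$ is constant on every connected component of $\om$.

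Next I would use the trace to conclude that $u$ vanishes on $\Gamma$. The trace operator $\gamma : W^{1,p}(\om) \to W^{1-\frac{1}{p},p}(\pom)$ is continuous, and the embedding $W^{1-\frac{1}{p},p}(\pom) \hookrightarrow L^p(\pom)$ is compact on the bounded Lipschitz hypersurface $\pom$. Restricting to $\Gamma$ and using $L^p(\Gamma) \hookrightarrow L^1(\Gamma)$, one obtains $u_n|_\Gamma \to u|_\Gamma$ in $L^1(\Gamma)$. Combined with $\|u_n|_\Gamma\|_{L^1(\Gamma)} \to 0$, this gives $u|_\Gamma = 0$ a.e., and since $\Gamma$ has positive $(n-1)$-measure, $u$ vanishes on $\Gamma$. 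As $u$ is constant per component, it must vanish on every component whose boundary meets $\Gamma$; under the (implicit) assumption that $\om$ is connected — or more generally that $\Gamma$ meets the boundary of each component — this yields $u \equiv 0$, contradicting $\|u\|_p = 1$.

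The only delicate point is the compactness of the trace into $L^1(\Gamma)$; I would justify it through the chain $W^{1,p}(\om) \to W^{1-\frac{1}{p},p}(\pom) \hookrightarrow\hookrightarrow L^p(\pom) \hookrightarrow L^1(\Gamma)$ described above, relying on the interpolation identification of the trace space already quoted in the preliminaries. The rest of the argument is a routine contradiction.
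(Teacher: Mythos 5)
Your argument is correct, but it follows a genuinely different route from the paper's. The paper proves this proposition directly and constructively: with $Lu = \frac{1}{|\Gamma|}\int_\Gamma u|_\Gamma$ and $u_\om = \frac{1}{|\om|}\int_\om u$, the operator $I-L$ annihilates constants, so $\|u - Lu\|_p = \|(I-L)(u-u_\om)\|_p \leq \|I-L\|\,\|u-u_\om\|_{1,p} \lesssim \|\nabla u\|_p$ by the mean-value form of Friedrichs' inequality, and then $\|u\|_p \leq \|u-Lu\|_p + \|Lu\|_p$ with $\|Lu\|_p \lesssim \|u|_\Gamma\|_{L^1(\Gamma)}$. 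That route needs no compactness and expresses the constant in terms of the classical Friedrichs constant. Your compactness-and-contradiction scheme is precisely the one the paper reserves for the Korn inequality (Proposition~\ref{korn}); it works here as well, at the price of a non-effective constant and appeals to Rellich--Kondrachov and to the behaviour of traces along the sequence. Two remarks: (i) once you know $u_n \to u$ in $L^p(\om)$ and $\nabla u_n \to 0$ strongly in $L^p(\om)$ with $\nabla u = 0$, the convergence $u_n \to u$ is in fact strong in $W^{1,p}(\om)$, so the boundary convergence follows from mere continuity of the trace into $L^1(\Gamma)$ --- the compact trace embedding you single out as the delicate point is not actually needed; (ii) the connectedness caveat you raise is fair, but it applies equally to the paper's proof, since the inequality $\|u-u_\om\|_p \leq C\|\nabla u\|_p$ it invokes also fails on disconnected domains, so both arguments implicitly assume $\om$ connected (or that $\Gamma$ meets the boundary of every component in positive measure).
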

\begin{proof}
We apply an interpolation argument to the well-known Friedrichs' inequality $$\|u-u_\om\|_p \leq C \|\nabla u\|_p, \qquad u_\om = \frac{1}{|\om|}\int_\om u,$$ on $W^{1,p}(\om)$ (see e.g.~\cite{nec}). Let $L : W^{1,p}(\om) \to L^p(\om)$ be the rank--$1$ operator $Lu = \frac{1}{|\Gamma|}\int_\Gamma u|_\Gamma$ and $I$ the inclusion of $W^{1,p}(\om)$ into $L^p(\om)$. Then $I-L:W^{1,p}(\om) \to L^p(\om)$ is bounded and
$$\|u-Lu\|_p=\|(I-L)(u-u_\om)\|_p\leq\|I-L\| \|u-u_\om\|_{1,p} \leq C \|\nabla u\|_p$$
for all $u \in W^{1,p}(\om)$. The assertion follows.
\end{proof}

Let $\omega(x,y) = (|x|+|y|)^{\delta}(1+|x|+|y|)^{1-\delta}$, $0 \leq \delta \leq 1$. In addition to the above norms, the following family of quasi--norms will prove useful:

\begin{definition}
For $v,w \in W^{1,p}(\om)$ and $k \in \N_0$, define
$$|v|_{(k,w,p)} = \left(\int_\om \omega(\nabla w, D^k v)^{p-2} |D^k v|^2\right)^{\frac{1}{2}},$$
where $|D^k v|^2 = \sum_{|\gamma| = k} |\partial^\gamma v|^2$.
\end{definition}

\begin{remark} \label{quasiremark1}
a) If $p\geq 2$, the $(1,w,p)$--quasi--norm can be estimated from above and below by suitable powers of the $W^{1,p}$--seminorm \cite{el}:
$$|v|_{1,p}^p \leq |v|_{(1,w,p)}^2 \leq C(|v|_{1,p}, |w|_{1,p}) |v|_{1,p}^2.$$
b) In the nondegenerate case $\delta=0$, we have
$|v|_{1,2}^2 \leq |v|_{(1,w,p)}^2$.\\
c) The following inequality is useful for computations with quasi--norms:
$$\lambda \mu \leq \max\{\varepsilon^{-1}, \varepsilon^{1/(1-p)}\} (a^{p-1} + \lambda)^{p'-2} \lambda^2 + \varepsilon(a+\mu)^{p-2}\mu^2$$
for $\lambda, \mu, a \geq 0$ and $\varepsilon >0$.
\end{remark}

The results of this paper easily generalize to the systems of equations describing certain inelastic materials. In this case, Lemma~\ref{poinfried} has to be replaced by the following Korn inequality:
\begin{proposition}\label{korn}
Assume $\om \subset \R^n$ is a bounded Lipschitz domain and $\Gamma \subset \pom$ has positive $(n-1)$--dimensional measure. Then there is a $C>0$ such that $$\|u\|_{1,p} \leq C( \|\varepsilon(u)\|_p + \|u|_{\Gamma}\|_{L^1(\Gamma)}) \quad \text{for all $u \in (W^{1,p}(\om))^n$}.$$
\end{proposition}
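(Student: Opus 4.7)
The plan is to follow the proof of Proposition~\ref{poinfried} almost verbatim, with the projection onto constants replaced by a continuous projection $P : (W^{1,p}(\Omega))^n \to RM$ onto the finite-dimensional space $RM$ of infinitesimal rigid motions $r(x) = a + Ax$ ($a \in \R^n$, $A \in \R^{n \times n}$ skew-symmetric), and with Friedrichs' inequality replaced by the $L^p$-version of Korn's second inequality on bounded Lipschitz domains,
$$\inf_{r \in RM} \|u - r\|_{W^{1,p}(\Omega)^n} \leq C \|\varepsilon(u)\|_p.$$

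To construct $P$, I would pick a basis $r_1, \dots, r_d$ of $RM$, choose a dual basis $\eta_1, \dots, \eta_d \in L^\infty(\Gamma)^n$ with respect to the pairing $(r, \eta) \mapsto \int_\Gamma r \cdot \eta\, d\sigma$, and set $Pu := \sum_i \bigl(\int_\Gamma u|_\Gamma \cdot \eta_i\, d\sigma\bigr)\, r_i$. Then $P$ is a projection onto $RM$ (because $Pr_j = r_j$ by duality), it depends on $u$ only through the trace $u|_\Gamma$, and it satisfies the central bound
$$\|Pu\|_{1,p} \leq \Bigl(\sum_i \|\eta_i\|_{L^\infty(\Gamma)} \|r_i\|_{1,p}\Bigr) \|u|_\Gamma\|_{L^1(\Gamma)}.$$

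The only nontrivial point is the existence of the dual basis, which reduces to showing that the restriction map $r \mapsto r|_\Gamma$ is injective on $RM$; this is exactly where the positive $(n-1)$-dimensional measure of $\Gamma$ enters. Indeed, if $r(x) = a + Ax \in RM$ satisfies $r|_\Gamma \equiv 0$ and $A \neq 0$, then skew-symmetry of $A$ forces $\mathrm{rank}\,A \geq 2$, so $\ker A$ has codimension at least $2$ and the zero set of $r$ is an affine subspace of dimension at most $n-2$ --- a set of vanishing $(n-1)$-measure, contradicting the hypothesis on $\Gamma$. Hence $A = 0$, and then $r|_\Gamma \equiv 0$ forces $a = 0$. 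This geometric step is the main (albeit modest) obstacle; everything else is linear algebra in the finite-dimensional space $RM$.

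With $P$ in hand, the proof concludes by a triangle inequality. Since $P$ is continuous and $I - P$ vanishes on $RM$, Korn's inequality yields $\|u - Pu\|_{1,p} \leq \|I-P\| \inf_{r \in RM} \|u-r\|_{1,p} \leq C \|\varepsilon(u)\|_p$; combining this with the bound on $\|Pu\|_{1,p}$ gives
$$\|u\|_{1,p} \leq \|u - Pu\|_{1,p} + \|Pu\|_{1,p} \leq C\bigl( \|\varepsilon(u)\|_p + \|u|_\Gamma\|_{L^1(\Gamma)} \bigr).$$
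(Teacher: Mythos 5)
Your proof is correct, but it follows a genuinely different route from the paper's. The paper argues by contradiction and compactness: starting from the additive form $\|u\|_{1,p} \leq C(\|\varepsilon(u)\|_p + \|u\|_p)$ of Korn's inequality and the compact embedding $W^{1,p}(\om) \hookrightarrow L^p(\om)$, a normalized sequence violating the claimed estimate is shown to be Cauchy in $W^{1,p}$ and to converge to a rigid motion vanishing on $\Gamma$; the same rank/parity observation about skew--symmetric matrices that you use (the zero set of a nonzero infinitesimal rigid motion is an affine subspace of dimension at most $n-2$, hence of vanishing $(n-1)$--measure) then yields the contradiction. You instead build an explicit bounded finite--rank projection $P$ onto the rigid motions, determined by the trace on $\Gamma$ alone, and combine it with the quotient form $\inf_{r\in RM}\|u-r\|_{1,p} \lesssim \|\varepsilon(u)\|_p$ of Korn's inequality via the triangle inequality; this mirrors the paper's own proof of Proposition~\ref{poinfried} and is therefore more uniform with the rest of Section~\ref{sec:pre}, besides being constructive (no compactness argument, and the constant is traced explicitly through the dual basis $\eta_i$). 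The trade--off is that the quotient form of Korn's inequality you invoke is itself usually derived from the additive form by precisely the compactness argument the paper employs, so the two proofs ultimately rest on the same ingredients; in particular, the identification of where the hypothesis on $\Gamma$ enters --- injectivity of $r \mapsto r|_\Gamma$ on the finite--dimensional space of rigid motions --- is common to both and is handled correctly in your write--up.
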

\begin{proof}
The $L^p$--version $\|u\|_{1,p} \leq C( \|\varepsilon(u)\|_p + \|u\|_{p})$ of Korn's inequality is well-known (see e.g.~\cite{korn}). Assume the assertion was false. Then $\|\varepsilon(u_n)\|_p + \|u_n|_{\Gamma}\|_{L^1(\Gamma)} \leq \frac{1}{n}$ for some sequence
in $W^{1,p}(\om)$ normalized to $\|u_n\|_{1,p}=1$. By the compactness of $W^{1,p}(\om) \hookrightarrow L^p(\om)$, we may assume $u_n$ to converge in $L^p(\om)$. The cited variant of Korn's inequality shows that $u_n$ is even Cauchy in $W^{1,p}(\om)$, hence converges to some $u_0$ with $\|\varepsilon(u_0)\|_p = \|u_0|_{\Gamma}\|_{L^1(\Gamma)}=0$. The kernel of $\varepsilon$ consists of skew--symmetric affine transformations $A x + b$, $A = -A^T$. As $\dim \mathrm{ker}\, A \equiv n \,\, \mathrm{mod}\,2$, $u_0$ cannot vanish on all of the ($n-1$--dimensional) $\Gamma$ unless $u_0=0$. Contradiction to $\|u_0\|_{1,p}=1$.\end{proof}

\section{Variational Formulation and Reduction to $\pom$}\label{formulation}

We continue to use the notation from the Introduction and mainly follow \cite{mast}. Fix some $p\geq2$ and, for $q(t)= \int_0^t s \varrho(s)\ \mathrm{d}s$, let $G(u) = \int_\om q(|\nabla u|)$ with derivative $$DG(u,v)=\langle G' u, v\rangle = \int_\om \varrho(|\nabla u|) \nabla u \nabla v \qquad \text{($u, v \in W^{1,p}(\om)$)}$$ and $j(v) = \int_{\gs} g |v|$, $v \in L^1(\gs)$. $G$ is known to be strictly convex and $G' : W^{1,p}(\om) \to \left(W^{1,p}(\om)\right)'$ bounded and uniformly monotone, hence coercive, with respect to the seminorm $| \cdot |_{1,p}$: There is some $\alpha_G>0$ such that for all $u, v \in W^{1,p}(\om)$
$$\langle G' u - G' v, u-v\rangle \geq \alpha_G |u-v|_{1,p}^p  \quad \text{and} \quad   \lim_{|u|_{1,p} \to \infty} \frac{\langle G' u, u\rangle}{|u|_{1,p}} = \infty.$$

The naive variational formulation of the transmission problem (\ref{diff}) minimizes the functional
$$\Phi(u_1, u_2) = G(u_1) + \frac{1}{2} \int_\omc |\nabla u_2|^2-\int_\om f u_1 - \langle t_0, u_2|_\pom\rangle + j((u_2-u_1+u_0)|_\gs)$$
over a suitable convex set.

\begin{lemma}
Minimizing $\Phi$ over the nonempty, closed and convex subset
$$C = \{(u_1,u_2)\in W^{1,p}(\om) \times W^{1,2}_{loc}(\omc) : (u_1-u_2)|_\gt= u_0, \, u_2 \in \mathcal{L}_2\},$$
$$\mathcal{L}_2 = \{v \in W^{1,2}_{loc}(\omc): \Delta v = 0\, \text{in $W^{-1,2}(\omc)$ $+$ radiation condition at $\infty$}\},$$
is equivalent to the system (\ref{diff}) in the sense of distributions if $\varrho\in C^1(\overline{\Omega} \times (0,\infty))$.
\end{lemma}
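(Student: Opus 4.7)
The plan is to first characterize minimizers of $\Phi$ over $C$ by a variational inequality and then to recover each line of (\ref{diff}) by judicious choices of test direction. Strict convexity of $G$, quadraticity of the Dirichlet integral on $\omc$, linearity of the remaining pairing terms, and convexity of $j$ (as $|\cdot|$ composed with an affine map) make $\Phi$ strictly convex on the nonempty closed convex set $C$. Hence $(u_1,u_2)$ minimizes $\Phi$ over $C$ if and only if
\begin{equation*}
DG(u_1, v_1 - u_1) + \int_\omc \nabla u_2 \cdot \nabla(v_2 - u_2) - \int_\om f (v_1 - u_1) - \langle t_0, (v_2 - u_2)|_\pom\rangle + j(w_v) - j(w_u) \geq 0
\end{equation*}
for all $(v_1, v_2) \in C$, where $w_u = (u_0 + u_2 - u_1)|_\gs$ and $w_v = (u_0 + v_2 - v_1)|_\gs$. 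The smooth pieces of $\Phi$ yield their linearizations by testing along the convex combination $(u_1 + t(v_1 - u_1), u_2 + t(v_2 - u_2))$, while the nondifferentiable $j$ is handled by the elementary convexity estimate $j(u + t(v - u)) \leq (1 - t)j(u) + tj(v)$, which supplies the correct one-sided bound as $t \to 0^+$.

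I would then recover the interior PDEs and the transmission condition by test directions that leave $w$ unchanged. Taking $v_2 = u_2$ and $v_1 = u_1 + \varphi$ with $\varphi \in C_c^\infty(\om)$ yields $-\Div(\varrho(|\nabla u_1|)\nabla u_1) = f$ in $\mathcal{D}'(\om)$ (the $C^1$ hypothesis on $\varrho$ being used to interpret the divergence as a distribution), whereas $\Delta u_2 = 0$ and the prescribed behaviour at infinity are built into $\mathcal{L}_2$. For the jump condition across $\pom$ I would take $(v_1,v_2) = (u_1 + \varphi, u_2 + \psi)$ with $\psi$ the harmonic function in $\mathcal{L}_2$ extending $\varphi|_\pom$ to the exterior; then $(v_1 - v_2)|_\pom = (u_1 - u_2)|_\pom$, so the constraint on $\gt$ and the $j$-term are both preserved. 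Two applications of Green's formula, combined with the interior equation and $\Delta u_2 = 0$, transform the (now linear, hence equality) inequality into $\varrho(|\nabla u_1|)\partial_\nu u_1 - \partial_\nu u_2 = t_0$ on $\pom$ in the appropriate duality.

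The friction conditions on $\gs$ are extracted next by testing with $v_2 = u_2$ and $v_1 = u_1 + t\varphi$ for $\varphi \in W^{1,p}(\om)$ with $\varphi|_\gt = 0$, which preserves the Dirichlet-type constraint. Dividing by $t>0$ and letting $t \to 0^+$, Green's formula and the interior equation leave
\begin{equation*}
\int_\gs \sigma\, \eta + j'(w_u; -\eta) \geq 0, \qquad \sigma := \varrho(|\nabla u_1|)\partial_\nu u_1|_\gs,\ \ \eta := \varphi|_\gs,
\end{equation*}
with $j'(w;\eta) = \int_{\{w \neq 0\}} g\,\mathrm{sign}(w)\,\eta + \int_{\{w = 0\}} g|\eta|$. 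Localizing $\eta$ separately to $\{w_u \neq 0\}$ and $\{w_u = 0\}$ forces $\sigma = g\,\mathrm{sign}(w_u)$ on the former and $|\sigma| \leq g$ on the latter; together these encode the last two lines of (\ref{diff}).

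The reverse implication is then routine: multiplying the two PDEs by $v_i - u_i$, integrating by parts, substituting the transmission condition, and invoking the pointwise inequality $g|w_v| - g|w_u| \geq \sigma(w_u - w_v)$ on $\gs$ (a direct consequence of the friction conditions) reassembles the variational inequality, and convexity of $\Phi$ turns this into minimality. The main technical hurdle I anticipate is the non-smoothness of $j$: one has to realize the $\gs$-friction conditions as the Euler inequality associated to the subdifferential of $j$, and to engineer admissible variations so that the affine constraint $(u_1 - u_2)|_\gt = u_0$ is preserved throughout while simultaneously isolating the contributions on $\gs$.
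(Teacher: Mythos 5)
Your argument for the equivalence is correct and is in substance exactly the computation the paper delegates to \cite{mast}: derive the first--order variational inequality for the convex functional $\Phi$ on $C$, recover the interior equations from compactly supported variations, the transmission condition from paired interior/exterior variations that fix the trace jump and hence leave the $j$--term and the $\gt$--constraint untouched, and the Coulomb conditions on $\gs$ from the one--sided directional derivative of $j$, with the converse following from the subgradient inequality $g|w_v|-g|w_u|\geq \sigma(w_u-w_v)$. The one point you pass over is the assertion --- part of the lemma's statement, and the part to which the paper's own proof devotes most of its attention, citing Remarks 2 and 4 of \cite{cag} --- that $C$ is nonempty and closed. Nonemptiness requires exhibiting an admissible pair realizing the jump $u_0$ on $\gt$ (e.g.\ $u_1=0$ together with a function $u_2\in\mathcal{L}_2$ whose trace restricts to $-u_0$ on $\gt$), and closedness requires checking that the harmonicity and the radiation condition defining $\mathcal{L}_2$ survive $W^{1,2}_{loc}(\omc)$ limits; neither is hard, but both should be recorded rather than asserted. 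Two smaller details also deserve a line: your exterior Green's formula for $n=2$ uses that harmonic functions obeying the radiation condition have gradients in $L^2(\omc)$, and the harmonic extension $\psi$ of $\varphi|_\pom$ used for the transmission condition must itself be taken in $\mathcal{L}_2$ so that $u_2+\psi$ stays admissible.
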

\begin{proof}
$C$ is apparently convex. A similar argument as in Remarks 2 and 4 of \cite{cag} shows that $C$ is closed and nonempty. The proof there almost exclusively involves the exterior problem in $\mathcal{L}_2$ and only requires basic measure theoretic properties of $W^{1,2}(\om)$, which also hold for $W^{1,p}(\om)$. Finally, repeat the computations of \cite{mast} to obtain equivalence with (\ref{diff}).
\end{proof}

To reduce the exterior problem to the boundary, we are going to need
the layer potentials
\begin{eqnarray*}
\mathcal{V} \phi(x) &=& -\frac{1}{\pi} \int_\pom \phi(x')\ \log|x-x'|\ dx',\\
\mathcal{K} \phi(x) &=& -\frac{1}{\pi} \int_\pom \phi(x')\ \partial_{\nu_{x'}} \log|x-x'| \ dx',\\
\mathcal{K}'\phi(x) &=& -\frac{1}{\pi} \int_\pom \phi(x')\
\partial_{\nu_x} \log|x-x'|\ dx',\\
\mathcal{W} \phi(x) &=& \frac{1}{\pi}\
\partial_{\nu_x}\int_\pom \phi(x')\ \partial_{\nu_{x'}} \log|x-x'| \
dx'
\end{eqnarray*}
associated to the Laplace equation on $\omc$. They extend from
$C^\infty(\pom)$ to a bounded map $\begin{pmatrix} -\mathcal{K} & \mathcal{V}\\
\mathcal{W} & \mathcal{K}'
\end{pmatrix}$ on the Sobolev space $ W^{\frac{1}{2},2}(\pom) \times
W^{-\frac{1}{2},2}(\pom)$. If the capacity of $\pom$ is less than
$1$, which can always be achieved by scaling, $\mathcal{V}$ and
$\mathcal{W}$ considered as operators on $W^{-\frac{1}{2},2}(\pom)$
are selfadjoint, $\mathcal{V}$ is positive and $\mathcal{W}$
non-negative. Similarly, the Steklov-Poincar\'e operator
$$S = \mathcal{W}+(1-\mathcal{K}')\mathcal{V}^{-1}(1-\mathcal{K}): W^{\frac{1}{2},2}(\pom) \subset
W^{-\frac{1}{2},2}(\pom) \to W^{-\frac{1}{2},2}(\pom)$$ defines a
positive and selfadjoint operator (pseudodifferential of order $1$,
if $\pom$ is smooth) with the main property
$$\partial_\nu u_2|_\pom = - S (u_2|_\pom-a)$$ for solutions $u_2 \in \mathcal{L}_2$ of the Laplace equation on $\omc$.
By Remark~\ref{sobremarks} e), $S$ gives rise to a coercive and symmetric bilinear form $\langle S u , u\rangle$ on $W^{\frac{1}{2},2}(\pom)$ and, in particular, a pairing on the traces of $W^{1,p}(\om)$ if and only if $p \geq 2$. \\

Using the weak definition of $\partial_\nu|_\pom$, $S$ reduces the integral over $\omc$ in $\Phi$ to the boundary:
$$\int_\omc |\nabla u_2|^2 = - \langle \partial_\nu u_2|_\pom, u_2|_\pom\rangle = \langle S (u_2|_\pom-a), u_2|_\pom\rangle \quad \text{for $u_2 \in \mathcal{L}_2$.}$$
Easy manipulations allow to substitute $u_2$ by a function $v$ on $\gs$ (cf. \cite{mast}): Let
$$\Wgs = \{u \in W^{\frac{1}{2},2}(\pom) : \mathrm{supp}\ u \subset \bar{\Gamma}_s\},\quad  X^p = W^{1,p}(\om) \times \Wgs$$ and $(u,v) = (u_1 -c, u_0+u_2|_\pom-u_1|_\pom) \in X^p$ for a suitable $c \in \R$. Collecting the data--dependent terms in $$\lambda(u,v) = \langle t_0 +S u_0, u|_\pom + v\rangle + \int_\om f u$$ leads to
$$\Phi(u_1,u_2) =  G(u) + \frac{1}{2} \langle S(u|_\pom +v),u|_\pom +v\rangle - \lambda(u,v) +j(v) + \frac{1}{2} \langle Su_0, u_0\rangle + \langle t_0, u_0\rangle.$$ The first three terms on the right hand side will be called $J(u,v)$.
\begin{lemma}
Minimizing $\Phi$ over $C$ is equivalent to minimizing $J + j$ over the nonempty closed convex set
$D = \{(u,v) \in X^p : \langle S(u|_\pom +v-u_0), 1\rangle = 0 \,\, \text{if $n=2$}\}$
\end{lemma}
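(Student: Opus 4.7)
The plan is to exhibit a bijection $\Psi : C \to D$ under which $\Phi$ differs from $J+j$ only by a constant depending on the data. Since both $C$ and $D$ are closed convex, this identifies the minimizers.

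First I would construct $\Psi$ explicitly. For $(u_1,u_2) \in C$, set $v = u_0 + u_2|_\pom - u_1|_\pom$ and $u = u_1 - c$ with $c$ to be fixed below. The condition $(u_1-u_2)|_\gt = u_0$ forces $v|_\gt = 0$, so $\mathrm{supp}\, v \subset \bar{\Gamma}_s$ and $v \in \Wgs$. In dimensions $n>2$ one sets $c = a = 0$, while in $n=2$ one chooses $c = a$; with this choice one has $u_2|_\pom - a = u|_\pom + v - u_0$. Consequently the property $\partial_\nu u_2|_\pom = -S(u_2|_\pom - a)$ together with the radiation condition $u_2 = a + o(1)$ (which forces $\int_\pom \partial_\nu u_2 = 0$) yields $\langle S(u|_\pom + v - u_0), 1\rangle = 0$, so $(u,v) \in D$. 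The inverse map sends $(u,v) \in D$ to $u_1 = u+c$ and $u_2 \in \mathcal{L}_2$ the (unique) harmonic extension of $u_1|_\pom + v - u_0$ to $\omc$; here the 2D constraint in $D$ is exactly what is needed to ensure $u_2 = a + o(1)$, so $u_2 \in \mathcal{L}_2$, and $(u_1 - u_2)|_\gt = u_0 - v|_\gt = u_0$, giving $(u_1,u_2) \in C$.

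Next I would substitute into $\Phi$. Because $\nabla u_1 = \nabla u$ we have $G(u_1) = G(u)$. Integration by parts in $\omc$ (together with the decay/radiation behavior) gives
$$\int_\omc |\nabla u_2|^2 = -\langle \partial_\nu u_2|_\pom, u_2|_\pom - a\rangle = \langle S(u_2|_\pom - a), u_2|_\pom - a\rangle = \langle S(u|_\pom+v-u_0), u|_\pom+v-u_0\rangle,$$
and expanding the quadratic form produces $\frac{1}{2}\langle S(u|_\pom+v),u|_\pom+v\rangle - \langle Su_0, u|_\pom+v\rangle + \frac{1}{2}\langle Su_0, u_0\rangle$. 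The Coulomb term reduces immediately to $j(v)$ since $v$ is supported in $\bar{\gs}$. For the linear terms, $\int_\om f u_1 = \int_\om f u + c\int_\om f$ and $\langle t_0, u_2|_\pom\rangle = \langle t_0, u|_\pom + v\rangle + \langle t_0, u_0\rangle^{-1}\cdot(-1) + c\langle t_0,1\rangle$; more precisely, $\langle t_0, u_2|_\pom\rangle - \langle t_0, u_0\rangle = \langle t_0, u|_\pom + v\rangle + c\langle t_0, 1\rangle - \langle t_0, u_0\rangle$ after collecting terms.

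Assembling the pieces one obtains
$$\Phi(u_1,u_2) = J(u,v) + j(v) + \tfrac{1}{2}\langle Su_0, u_0\rangle + \langle t_0, u_0\rangle - c\Bigl(\int_\om f + \langle t_0, 1\rangle\Bigr).$$
In $n>2$ the final bracket is multiplied by $c=0$, and in $n=2$ the compatibility condition (\ref{uniq}) forces it to vanish. Thus $\Phi - (J+j)$ is a constant on $C$, so $\Psi$ transports the minimization problem on $C$ to the one on $J+j$ over $D$. Nonemptiness and closedness of $D$ transfer from those of $C$ via $\Psi$, and convexity of $D$ is immediate since the constraint in $n=2$ is linear. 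The main bookkeeping obstacle is the $n=2$ case: one must choose $c$ consistently with the constant at infinity, track how the radiation condition translates into the scalar constraint defining $D$, and invoke (\ref{uniq}) at precisely the right moment to eliminate the $c$-dependent remainder; once this is done the remainder of the argument is a direct substitution.
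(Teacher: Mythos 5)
Your route is essentially the paper's (which itself defers the substitution computations to \cite{mast}): set $v = u_0+u_2|_\pom-u_1|_\pom$, $u=u_1-c$ with $c=a$, reduce $\int_\omc|\nabla u_2|^2$ via $S$, and use the compatibility condition (\ref{uniq}) to kill the $c$--dependent remainder; your final identity $\Phi = J+j+\tfrac12\langle Su_0,u_0\rangle+\langle t_0,u_0\rangle$ matches the paper exactly, and your translation of the radiation condition into the scalar constraint defining $D$ is correct.

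One point deserves attention: you pass from $v|_\gt=0$ directly to $v\in\Wgs$, but membership in $\Wgs$ first requires $v\in W^{\frac12,2}(\pom)$, and $u_1|_\pom$ is a priori only in $W^{1-\frac1p,p}(\pom)$. That this space embeds into $W^{\frac12,2}(\pom)$ precisely because $p\geq 2$ (Remark~\ref{sobremarks}c) is the single observation the paper's proof singles out as new relative to \cite{mast}, and it is the crux of the whole $L^p$--$L^2$ coupling; you should state it. (The garbled intermediate expression ``$\langle t_0,u_0\rangle^{-1}\cdot(-1)$'' is evidently a typo and does not propagate into your final, correct formula.)
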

\begin{proof}
As in \cite{mast}. The main additional observation here is that the substitution $v = u_0+u_2|_\pom-u_1|_\pom$ indeed defines an element of $\Wgs$, because $u_0, u_2|_\pom \in W^{\frac{1}{2},2}(\pom)$, $u_1|_\pom \in W^{1-\frac{1}{p},p}(\pom) \subset W^{\frac{1}{2},2}(\pom)$ by Remark~\ref{sobremarks} and $v|_\gt = 0$, if $(u_1, u_2) \in C$.
\end{proof}

\section{Existence and Uniqueness}\label{sec:uniq}

Minimization of $J+j$ over $D$ translates into the following variational inequality: Find $(\hat u, \hat v) \in X^p$ such that
$$\langle G'\hat u, u-\hat u \rangle + \langle S(\hat u|_\pom+\hat v), (u-\hat u)|_\pom + v-\hat v\rangle + j(v)-j(\hat v) \geq \lambda(u-\hat u, v-\hat v)$$
for all $(u,v) \in X^p$. Note that $D$ has been replaced by $X^p$.\\

We now prove the crucial monotony estimate:
\begin{lemma}\label{monotony}
The operator in the variational inequality is uniformly monotone on $X^p$. There exists an $\alpha = \alpha(C) > 0$ such that for all $\|u,v\|_X, \|\hat u,\hat v\|_X <C$
\begin{align*}
&\alpha (\|u-\hat u\|^p_{W^{1,p}(\om)} + \|v-\hat v\|^p_{\widetilde
  W^{\frac{1}{2},2}(\gs)}) \leq \langle G' \hat u - G' u, \hat u -
u\rangle \\
 & \hspace*{2.5cm}+\
\langle S((\hat u-u)|_\pom+\hat v-v), (\hat u-u)|_\pom+\hat v-v\rangle.
\end{align*}
\end{lemma}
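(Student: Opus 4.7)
The plan is to estimate the two terms on the right-hand side separately and then recover the full product-space norm via Friedrichs' inequality applied on $\gt$. Abbreviate $\phi := \hat u - u$, $\psi := \hat v - v$, and $M := \|\phi|_\pom + \psi\|_{W^{\frac{1}{2},2}(\pom)}$. The uniform monotony of $G'$ recorded in Section~\ref{formulation} gives $\langle G'\hat u - G'u,\hat u - u\rangle \geq \alpha_G |\phi|_{1,p}^p$, while the coercivity of $\langle S\cdot,\cdot\rangle$ on $\Wg$ (Remark~\ref{sobremarks}e and the properties of $S$ recorded in Section~\ref{formulation}) yields $\langle S(\phi|_\pom+\psi),\phi|_\pom+\psi\rangle \geq c_S M^2$. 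It therefore suffices to prove, on the ball of radius $C$,
\begin{equation*}
\|\phi\|_{\Wp}^p + \|\psi\|_{\Wgs}^p \;\leq\; \tilde C\bigl(|\phi|_{1,p}^p + M^2\bigr),
\end{equation*}
with $\tilde C$ depending on $C$.

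To control $\|\phi\|_p$, I apply Proposition~\ref{poinfried} with $\Gamma=\gt$. Since $\psi\in\Wgs$ is supported in $\overline{\gs}$ and $\gs\cap\gt=\emptyset$, its trace on $\gt$ vanishes a.e., so $\phi|_{\gt}=(\phi|_\pom+\psi)|_{\gt}$. The continuous embeddings $\Wg\hookrightarrow L^2(\pom)\hookrightarrow L^1(\gt)$ then bound $\|\phi|_{\gt}\|_{L^1(\gt)}\leq C_1 M$, and the proposition produces $\|\phi\|_p^p\leq C_2(|\phi|_{1,p}^p+M^p)$. The ball assumption together with the trace embedding $W^{1-\frac{1}{p},p}(\pom)\hookrightarrow\Wg$ of Remark~\ref{sobremarks}c bounds $\|\phi|_\pom\|_{W^{\frac{1}{2},2}}\lesssim\|\phi\|_{1,p}\leq 2C$, hence $M\leq M_0$ for some $M_0=M_0(C)$, and $M^p\leq M_0^{p-2}M^2$. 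Collecting the contributions gives $\|\phi\|_{\Wp}^p\leq\tilde C_1(|\phi|_{1,p}^p+M^2)$.

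For the $\psi$-component the triangle inequality yields $\|\psi\|_{\Wgs}\leq M+\|\phi|_\pom\|_{\Wg}$; raising to the $p$-th power, using the trace embedding of Remark~\ref{sobremarks}c once more, and inserting the bound for $\|\phi\|_{\Wp}^p$ just obtained (together with $M^p\leq M_0^{p-2}M^2$), I arrive at $\|\psi\|_{\Wgs}^p\leq\tilde C_2(|\phi|_{1,p}^p+M^2)$. Summing the two estimates and setting $\alpha:=\min(\alpha_G,c_S)/(\tilde C_1+\tilde C_2)$ completes the proof.

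The main obstacle is the exponent mismatch: $G'$ supplies a $p$-th power of a seminorm, whereas $S$ supplies only a square of a norm. The boundedness assumption is used precisely to absorb $M^p$ into $M^2$ (and, via the trace inequality, to control $\|\phi|_\pom\|_{\Wg}$ by the full $\Wp$-norm without circularity); this is where the $C$-dependence of $\alpha$ enters.
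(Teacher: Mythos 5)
Your proof is correct and follows essentially the same route as the paper's: monotony of $G'$, coercivity of $\langle S\cdot,\cdot\rangle$ on $\Wg$, Friedrichs' inequality (Prop.~\ref{poinfried}) with $\Gamma=\gt$ exploiting that $\hat v - v$ vanishes on $\gt$, the ball hypothesis to absorb $M^p$ into $M^2$, and the triangle inequality to recover the $v$-component. The only difference is organizational — you work backward from the left-hand side while the paper chains forward from the right-hand side with an $\varepsilon$-splitting of $\pom$ into $\gs$ and $\gt$ — and you omit the paper's closing remark on how the estimate (with exponent $2$) extends to all of $X^p$.
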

\begin{proof}
Recall the monotony estimate for $G'$ from Section~\ref{formulation}:
$$\langle G' \hat u - G' u, \hat u - u\rangle \geq \alpha_G |\hat u-u|_{1,p}^p.$$
The triangle inequality and convexity of $x^p$ imply
\begin{eqnarray*}
\|\hat v-v\|^p_{\widetilde W^{\frac{1}{2},2}(\gs)} &\leq& (\|(\hat u-u)|_\gs+\hat v-v\|_{ W^{\frac{1}{2},2}(\gs)}+\|(\hat u-u)|_\gs\|_{ W^{\frac{1}{2},2}(\gs)})^p\\
& \leq & 2^{p-1} \ (\|(\hat u-u)|_\gs+\hat v-v\|^p_{ W^{\frac{1}{2},2}(\gs)}+\|(\hat u-u)|_\gs\|^p_{ W^{\frac{1}{2},2}(\gs)}).
\end{eqnarray*}
Using $W^{1-\frac{1}{p},p}(\gs) \hookrightarrow W^{\frac{1}{2},2}(\gs)$ as well as the boundedness of the trace operator,
$$2^{1-p}\ \|\hat v-v\|^p_{\widetilde W^{\frac{1}{2},2}(\gs)} - \beta\ \|\hat u-u\|^p_{W^{1,p}(\om)} \leq \|(\hat u-u)|_\gs+\hat v-v\|^p_{ W^{\frac{1}{2},2}(\gs)}$$
follows for some $\beta\geq 1$. Let $$K = \{(u,v,\hat u,\hat v) \in X^p\times X^p : \|(\hat u-u)|_\pom+\hat v-v\|_{W^{\frac{1}{2},2}(\pom)}  < 2 \beta C\}$$
and $0<\varepsilon<\beta^{-1}$. Since $S$ is positive definite on $W^{\frac{1}{2},2}(\pom)$, we obtain from Friedrichs' inequality for $(u,v,\hat u, \hat v) \in K$ or, in particular, if $\|u,v\|_X, \|\hat u,\hat v\|_X <C$:
\begin{align*}
&\langle G' \hat u - G' u, \hat u - u\rangle + \langle S((\hat u-u)|_\pom+\hat v-v), (\hat u-u)|_\pom+\hat v-v\rangle\\
&\gtrsim |\hat u-u|_{1,p}^p  + \|(\hat u-u)|_\pom+\hat v-v\|^2_{W^{\frac{1}{2},2}(\pom)}\\
&\gtrsim |\hat u-u|_{1,p}^p + \|(\hat u-u)|_\pom+\hat v-v\|^p_{W^{\frac{1}{2},2}(\pom)}\\
&\gtrsim |\hat u-u|_{1,p}^p + \varepsilon\ \|(\hat u-u)|_\gs+\hat v-v\|^p_{ W^{\frac{1}{2},2}(\gs)} + \|(\hat u-u)|_\gt \|^p_{W^{\frac{1}{2},2}(\gt)}\\
&\gtrsim \|\hat u-u\|_{W^{1,p}(\om)}^p  + \varepsilon\ \|(\hat u-u)|_\gs+\hat v-v\|^p_{ W^{\frac{1}{2},2}(\gs)}\\
&\gtrsim (1-\varepsilon \beta)\ \|\hat u-u\|_{W^{1,p}(\om)}^p + 2^{1-p}{\varepsilon}\ \|\hat v-v\|^p_{\widetilde W^{\frac{1}{2},2}(\gs)}.
\end{align*}
Uniform monotony on all of $X^p$ is shown similarly, but on the unbounded complement $(X^p\times X^p) \setminus K$ the exponents $p$ on the left hand side have to be replaced by $2$.
\end{proof}
\begin{theorem}\label{exuniq}
The variational inequality is equivalent to the transmission problem (\ref{diff}) and has a unique solution.
\end{theorem}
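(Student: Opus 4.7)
The plan is to treat the variational inequality abstractly as a monotone VI with a convex proper lsc penalty $j$ on the reflexive product space $X^p$, then combine Lemmas~1--3 with a standard existence theorem (Browder--Lions--Stampacchia style) and a uniqueness argument based on pairing two solutions.

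For the equivalence, I would chain the two lemmas of Section~\ref{formulation}: minimizing $\Phi$ over $C$ is equivalent to the transmission problem (\ref{diff}), and is in turn equivalent to minimizing $J+j$ over $D$. The variational inequality is nothing but the first-order optimality condition for the convex functional $J+j$, written with directional derivatives of the smooth part $J$ and a difference of $j$-values coming from the convex nonsmooth term. The only nontrivial point is the replacement of $D$ by the full space $X^p$: in the case $n=2$ this is legitimate because, by a straightforward inspection, $\langle S\cdot,1\rangle$ vanishes on $W^{\frac{1}{2},2}(\pom)$ (the Steklov--Poincar\'e operator is associated to decaying exterior harmonic functions) and the constant $c$ in the substitution $(u,v)=(u_1-c,u_0+u_2|_{\pom}-u_1|_{\pom})$ can be adjusted so that the $n=2$ normalization $\langle S(u|_\pom+v-u_0),1\rangle=0$ is automatically satisfied.

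For existence, define $A:X^p\to (X^p)'$ by $\langle A(u,v),(\phi,\psi)\rangle=\langle G'u,\phi\rangle+\langle S(u|_\pom+v),\phi|_\pom+\psi\rangle$. Lemma~\ref{monotony}, applied with $(\hat u,\hat v)=(0,0)$, shows that $A$ is strictly monotone and coercive in the sense that $\langle A(u,v)-A(0,0),(u,v)\rangle/\|(u,v)\|_{X^p}\to\infty$. Hemicontinuity of $A$ follows from the continuity of $\varrho$ and the boundedness of $S$ via a Nemytskii-type argument. The functional $j$ is convex, finite and continuous on $\Wgs$ (because $\Wgs\hookrightarrow L^2(\gs)\hookrightarrow L^1(\gs)$ and $g\in L^\infty(\gs)$). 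Hence the theorem of Lions on variational inequalities for monotone hemicontinuous coercive operators on reflexive Banach spaces (see e.g.~Kinderlehrer--Stampacchia) yields a solution $(\hat u,\hat v)\in X^p$.

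For uniqueness, suppose $(\hat u_i,\hat v_i)$, $i=1,2$, are two solutions. Test the inequality for $(\hat u_1,\hat v_1)$ against $(u,v)=(\hat u_2,\hat v_2)$ and vice versa, and add. The $\lambda$-terms cancel and the $j$-differences combine to $0$, leaving
\begin{align*}
0 &\geq \langle G'\hat u_1-G'\hat u_2,\hat u_1-\hat u_2\rangle \\
  &\quad + \langle S((\hat u_1-\hat u_2)|_\pom+\hat v_1-\hat v_2),(\hat u_1-\hat u_2)|_\pom+\hat v_1-\hat v_2\rangle.
\end{align*}
Lemma~\ref{monotony} bounds the right-hand side below by $\alpha(\|\hat u_1-\hat u_2\|^p_{W^{1,p}(\om)}+\|\hat v_1-\hat v_2\|^p_{\Wgs})$, which forces the two solutions to coincide. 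The main obstacle I expect is the honest treatment of the $n=2$ constraint in the passage from $D$ to $X^p$; verifying hemicontinuity, coercivity, and the uniqueness step is otherwise a direct application of Lemma~\ref{monotony} and the abstract VI theory.
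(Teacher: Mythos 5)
Your route is the same as the paper's, which is only two sentences long: it cites the computations of \cite{mast} for the equivalence of the variational inequality with the minimization of $J+j$ over $D$ (and hence with (\ref{diff})), and it obtains existence and uniqueness from Lemma~\ref{monotony} via an abstract theorem on monotone variational inequalities (\cite{zei}, Proposition 32.36). Your Browder--Lions--Stampacchia existence argument and the pairing argument for uniqueness are exactly the content of that citation, and they are correct: hemicontinuity, the coercivity supplied by Lemma~\ref{monotony} (note that on unbounded sets the exponent $p$ degrades to $2$, which still gives $\langle A(u,v),(u,v)\rangle/\|(u,v)\|_{X^p}\to\infty$), the continuity of $j$ on $\Wgs$, and the cancellation of the $j$- and $\lambda$-terms in the uniqueness step all go through.

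There is, however, one genuinely false assertion in your treatment of the passage from $D$ to $X^p$: it is not true that $\langle Sw,1\rangle$ vanishes for all $w\in\Wg$. Since $S$ is selfadjoint, that claim is equivalent to $S1=0$, which would make $\langle Sw,w\rangle$ degenerate on constants and hence not coercive on $\Wg$ --- but this coercivity is precisely what Lemma~\ref{monotony}, and therefore your own existence and uniqueness argument, relies on. Concretely, $\mathcal{W}1=0$ but $(1-\mathcal{K})1\neq 0$, so $\langle S1,1\rangle=\langle \mathcal{V}^{-1}(1-\mathcal{K})1,(1-\mathcal{K})1\rangle>0$ when the capacity of $\pom$ is less than $1$. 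The constraint $\langle S(u|_\pom+v-u_0),1\rangle=0$ defining $D$ for $n=2$ is therefore a genuine normalization, not an identity. The correct mechanism for dropping it is the second half of your own remark: the additive constant $c$ in the substitution $(u,v)=(u_1-c,\,u_0+u_2|_\pom-u_1|_\pom)$ is free, and shifting a solution of the unconstrained inequality on $X^p$ by the unique constant enforcing the normalization does not affect $G'u$ or $v$, while the compatibility condition (\ref{uniq}) guarantees that the data term $\lambda$ is insensitive to this shift. Carrying this out carefully is exactly the computation the paper delegates to \cite{mast}; your proof should replace the vanishing claim by this argument.
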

\begin{proof}
We repeat the computations in \cite{mast} to get the equivalence with the minimization of $J+j$ over $D$, and hence with (\ref{diff}).
Existence and uniqueness follow from Lemma~\ref{monotony}, e.g.~by applying \cite{zei}, Proposition 32.36.
\end{proof}
\section{Discretization and Error Analysis}\label{sec:disc}

In order to avoid using
$S=\mathcal{W}+(1-\mathcal{K}')\mathcal{V}^{-1}(1-\mathcal{K})$
explicitly, the numerical implementation involves a variant of the
variational inequality
$$\langle G'\hat u, u-\hat u
\rangle + \langle S(\hat u|_\pom+\hat v), (u-\hat u)|_\pom + v-\hat
v\rangle + j(v)-j(\hat v) \geq \lambda(u-\hat u, v-\hat v)$$ in
terms of the layer potentials. Our a posteriori analysis is
therefore based on the following equivalent problem: Find $(\hat
u,\hat v, \hat \phi) \in X^p \times W^{-\frac{1}{2},2}(\pom)=: Y^p$,
such that
\begin{align*}
&\langle G'\hat u, u-\hat u \rangle +\langle \mathcal{W}(\hat
u|_\pom+\hat v) + (\mathcal{K}'-1) \hat \phi, (u-\hat u)|_\pom
+v-\hat v\rangle\\
&\hspace{0.8cm}+ j(v)-j(\hat v)\ \geq \
\langle
t_0 + \mathcal{W} u_0, (u-\hat u)|_\pom +v-\hat v\rangle + \int_\om f (u-\hat u), \\
 &\langle \phi, \mathcal{V} \hat \phi + (1-\mathcal{K})(\hat
u|_\pom+\hat v)\rangle = \langle \phi, (1-\mathcal{K}) u_0\rangle
\end{align*}
for all $(u,v,\phi) \in Y^p$. More concisely,
$$B(\hat u, \hat v, \hat \phi; u-\hat u, v-\hat v, \phi-\hat \phi)+ j(v)-j(\hat v) \geq \Lambda(u-\hat u, v-\hat v, \phi-\hat \phi)$$
with
\begin{eqnarray*}
B(u, v, \phi; \bar u, \bar v, \bar \phi)&=&\langle G'u, \bar u
\rangle +\langle \mathcal{W}(u|_\pom+v) + (\mathcal{K}'-1) \phi,
\bar u|_\pom +\bar
v\rangle\\
&&\quad+\langle \bar \phi, \mathcal{V} \phi +
(1-\mathcal{K})(u|_\pom+v)\rangle,\\
\Lambda(u, v, \phi) &=& \langle t_0 + \mathcal{W} u_0, u|_\pom
+v\rangle + \int_\om f u+\langle \phi, (1-\mathcal{K}) u_0\rangle.
\end{eqnarray*}

The more detailed a priori and a posteriori error analysis requires
a few basic properties of the quasi--norms \cite{el}.
\begin{remark}\label{quasiremark2}
a) The continuity and coercivity estimates can be sharpened: For all $u,v \in \Wp$
$$\langle G' u - G' v, u-v\rangle\lesssim |u-v|_{(1,u,p)}^2 \lesssim \langle G' u - G' v, u-v\rangle.$$
b) There is $\theta>0$ such that for all $\varepsilon \in
(0,\infty)$ and all $u,v,w \in \Wp$
$$|\langle G' u - G' v, w\rangle| \lesssim \varepsilon |u-v|_{(1,u,p)}^2 + \varepsilon^{-\theta} |w|^2_{(1,u,p)}.$$
\end{remark}

\begin{lemma} \label{Bcoercive}
For all $(\hat u,\hat v,\hat \phi), (u,v,\phi) \in Y^p$ we have
\begin{align*}
& |\hat u - u|_{(1,\hat u, p)}^2 + \|(\hat u - u)|_\pom + \hat v - v\|_\Wg^2 + \|\eta\|_{W^{-\frac{1}{2},2}(\pom)}^2 \\
& \lesssim |\hat u - u|_{(1,\hat u, p)}^2 + \|(\hat u - u)|_\pom + \hat v - v\|_\Wg^2 + \|\hat \phi- \phi\|_{W^{-\frac{1}{2},2}(\pom)}^2\\
& \lesssim  B(\hat u,\hat v,\hat \phi;\hat u- u, \hat v- v, \eta) -
B(u,v,\phi;\hat u- u, \hat v- v, \eta),
\end{align*}
where $2 \eta = \hat \phi -\phi + V^{-1}(1-K)((\hat u-u)|_\pom +
\hat v-v)$.
\end{lemma}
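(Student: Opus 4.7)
The plan is to compute the right-hand side of the second inequality explicitly and observe that the specific choice of $\eta$ forces a cancellation between the $\Delta\phi$ and $w=\Delta u|_\pom+\Delta v$ cross-terms in $B$, leaving a sum of four manifestly non-negative symmetric quadratic forms, each of which is coercive in one of the norms appearing on the left. Writing $\Delta u = \hat u - u$, $\Delta v = \hat v - v$, $\Delta\phi = \hat\phi - \phi$, linearity of $B$ in its last three slots gives
\begin{align*}
& B(\hat u,\hat v,\hat\phi;\Delta u,\Delta v,\eta) - B(u,v,\phi;\Delta u,\Delta v,\eta) \\
&\quad= \langle G'\hat u - G'u,\Delta u\rangle + \langle \mathcal{W} w + (\mathcal{K}'-1)\Delta\phi,\, w\rangle + \langle \eta,\, \mathcal{V}\Delta\phi + (1-\mathcal{K})w\rangle.
\end{align*}

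The relation $2\eta = \Delta\phi + \mathcal{V}^{-1}(1-\mathcal{K})w$ is equivalent to $\mathcal{V}(2\eta) = \mathcal{V}\Delta\phi + (1-\mathcal{K})w$, so the last term equals $2\langle \eta,\mathcal{V}\eta\rangle$. Expanding this via the self-adjointness of $\mathcal{V}$ (hence $\mathcal{V}^{-1}$) yields
\begin{equation*}
2\langle \eta,\mathcal{V}\eta\rangle = \tfrac{1}{2}\langle \Delta\phi,\mathcal{V}\Delta\phi\rangle + \langle \Delta\phi,(1-\mathcal{K})w\rangle + \tfrac{1}{2}\langle (1-\mathcal{K})w,\mathcal{V}^{-1}(1-\mathcal{K})w\rangle,
\end{equation*}
while the mutual adjointness of $\mathcal{K},\mathcal{K}'$ gives $\langle (\mathcal{K}'-1)\Delta\phi, w\rangle = -\langle \Delta\phi,(1-\mathcal{K})w\rangle$. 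The two occurrences of $\langle \Delta\phi,(1-\mathcal{K})w\rangle$ cancel, and recognizing $\mathcal{W} + (1-\mathcal{K}')\mathcal{V}^{-1}(1-\mathcal{K}) = S$ reduces the difference to
\begin{equation*}
\langle G'\hat u - G'u,\Delta u\rangle + \tfrac{1}{2}\langle \mathcal{W}w,w\rangle + \tfrac{1}{2}\langle Sw,w\rangle + \tfrac{1}{2}\langle \Delta\phi,\mathcal{V}\Delta\phi\rangle.
\end{equation*}

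For the second $\lesssim$, discard $\tfrac{1}{2}\langle\mathcal{W}w,w\rangle\geq 0$ and bound the remaining three terms from below by $|\hat u - u|_{(1,\hat u,p)}^2$ (Remark \ref{quasiremark2} a), invoked with $v=u$), $\|w\|_\Wg^2$ (coercivity of $S$ on $\Wg$, available since the capacity of $\pom$ is less than~$1$), and $\|\Delta\phi\|_{W^{-1/2,2}(\pom)}^2$ (positive-definiteness of $\mathcal{V}$), respectively. For the first (trivial) $\lesssim$, the triangle inequality applied to $2\eta = \Delta\phi + \mathcal{V}^{-1}(1-\mathcal{K})w$ together with the continuity of $\mathcal{V}^{-1}(1-\mathcal{K})\colon \Wg \to W^{-\frac{1}{2},2}(\pom)$ (the composition has order~$+1$, compare Remark~\ref{sobremarks} d) delivers the claim at once.

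The only real subtlety is algebraic bookkeeping: one must resist simplifying $\langle \eta,\mathcal{V}\Delta\phi + (1-\mathcal{K})w\rangle$ too early, since the whole point of the prescription for $\eta$ is that it converts the antisymmetric pairing between $\Delta\phi$ and $w$ produced by $\mathcal{K}' - 1$ into three separately positive squares — one controlling $\Delta\phi$, one contributing the missing $(1-\mathcal{K}')\mathcal{V}^{-1}(1-\mathcal{K})$--half of $S$, and one harmless duplicate of the $\mathcal{W}$--half. Everything else follows from the coercivity facts listed above.
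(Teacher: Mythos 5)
Your proposal is correct and follows essentially the same route as the paper: the key step in both is the identity rewriting the difference of the $B$--forms as $\langle G'\hat u-G'u,\hat u-u\rangle$ plus non-negative quadratic forms in $\mathcal{W}$, $S$ and $\mathcal{V}$ (your explicit expansion of $2\langle\eta,\mathcal{V}\eta\rangle$ and the cancellation of the $\langle\Delta\phi,(1-\mathcal{K})w\rangle$ cross-terms is exactly the computation the paper leaves implicit), followed by the same coercivity bounds and the same triangle-inequality argument for the first inequality.
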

\begin{proof}
The right hand side of the identity
\begin{align*}
& B(\hat u,\hat v,\hat \phi;\hat u- u, \hat v- v,
\eta) -
B(u,v,\phi;\hat u- u, \hat v- v, \eta)\\
&= \langle G'\hat u - G'u, \hat u - u\rangle +
\textstyle{\frac{1}{2}} \langle \mathcal{W}((\hat u - u)|_\pom +
\hat v - v),(\hat u - u)|_\pom + \hat v - v)\rangle  \\
&\hphantom{=} +
\textstyle{\frac{1}{2}} \langle S((\hat u - u)|_\pom + \hat v -
v),(\hat u - u)|_\pom + \hat v - v)\rangle+
\textstyle{\frac{1}{2}}\langle \mathcal{V}(\hat \phi - \phi), \hat
\phi - \phi \rangle.
\end{align*}
is, up to a constant, larger than $\|\hat u - u,\hat v - v,\hat
\phi-\phi\|^2_{(\hat u, Y^p)}$. Furthermore,
$$\|\eta\|_{W^{-\frac{1}{2},2}(\pom)}\lesssim \|\hat \phi - \phi\|_{W^{-\frac{1}{2},2}(\pom)}+\|(\hat u - u)|_\pom + \hat v - v\|_{W^{\frac{1}{2},2}(\pom)}.$$
\end{proof}

Let $\{\mathcal{T}_h\}_{h\in I}$ a regular triangulation of
$\om$ into disjoint open regular triangles $K$, so that
$\overline{\om} = \bigcup_{K \in \mathcal{T}_h} K$. Each
element has at most one edge on $\pom$, and the closures of any two
of them share at most a single vertex or edge. Let $h_K$ denote the
diameter of $K \in \mathcal{T}_h$ and $\rho_K$ the diameter of the
largest inscribed ball. We assume that $1 \leq \max_{K \in
\mathcal{T}_h} \frac{h_K}{\rho_K} \leq R$ independent of $h$ and
that $h = \max_{K\in \mathcal{T}_h} h_K$. $\mathcal{E}_h$ is going
to be the set of all edges of the triangles in $\mathcal{T}_h$, $D$
the set of nodes. Associated to $\mathcal{T}_h$ is the space $\Whp \subset
\Wp$ of functions whose restrictions to any $K \in \mathcal{T}_h$
are linear.

$\pom$ is triangulated by $\{l \in \mathcal{E}_h : l
\subset \pom\}$. $\Whg$ denotes the corresponding space of piecewise
linear functions, and $\Whgs$ the subspace of those supported on
$\gs$. Finally, $\Whmg \subset \Wmg$.

We denote by $i_h: \Whp \hookrightarrow \Wp$, $j_h :
\Whgs\hookrightarrow \Wgs$ and $k_h: \Whmg \hookrightarrow \Wmg$ the
canonical inclusion maps. Set $X^p_h = \Whp \times \Whgs$, We denote
by $i_h: \Whp \hookrightarrow \Wp$, $j_h : \Whgs\hookrightarrow
\Wgs$ and $k_h: \Whmg \hookrightarrow \Wmg$ the canonical inclusion
maps. Set $X^p_h = \Whp \times \Whgs$,
\[
S_h = \frac12( W+(I-K')k_h(k_h^* V k_h)^{-1} k_h^*(I-K))
\]
and $$\lambda_h(u_h,v_h) =\langle t_0+ S_h u_0, u|_\pom + v\rangle +
\int_\om f u_h.$$
As is well--known, there exists $h_0>0$ such that
the approximate Steklov--Poincar\'e operator $S_h$ is coercive
uniformly in $h<h_0$, i.e.~$\langle S_h u_h, u_h\rangle \geq
\alpha_S \|u_h\|_{\Wg}^2$ with $\alpha_S$ independent of $h$.

The discretized variational inequality reads as follows: Find $(\hat
u_h, \hat v_h, \hat \phi_h) \in Y^p_h$ such that
$$B(\hat u_h, \hat v_h, \hat \phi_h; u_h-\hat u_h, v_h-\hat v_h, \phi_h-\hat \phi_h)+ j(v_h)-j(\hat v_h) \geq \Lambda(u_h-\hat u_h, v_h-\hat v_h, \phi_h-\hat \phi_h)$$
for all $(u_h, v_h, \phi_h) \in Y^p_h$.
Repeating the arguments from the previous section, one obtains a
unique solution to the discretized variational inequality.

\begin{theorem} \label{apriori}
Let $(\hat u, \hat v, \hat \phi) \in Y^p$, $(\hat u_h, \hat v_h,
\hat \phi_h) \in Y^p_h$ be the solutions of the continuous
resp.~discretized variational problem. The following a priori bound
for the error holds uniformly in $h<h_0$:
\begin{align*}
&\|\hat u - \hat u_h, \hat v - \hat v_h, \hat \phi
- \hat
\phi_h\|_{Y^p}^p\\
 &\lesssim |\hat u - \hat u_h|_{(1,\hat u, p)}^2 + \|(\hat u - \hat u_h)|_\pom + \hat v - \hat v_h\|_\Wg^2 + \|\hat \phi- \hat \phi_h\|_{W^{-\frac{1}{2},2}(\pom)}^2\\
&\lesssim \inf_{(u_h,v_h,\phi_h) \in Y^p_h} \|\hat u - u_h, \hat v
- v_h, \hat \phi - \phi_h\|_{Y^p}^2 + \|\hat v-v_h\|_{L^2(\gs)} .
\end{align*}
\end{theorem}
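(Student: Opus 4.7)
The proof splits into two parts, one for each $\lesssim$. For the first, I apply Lemma \ref{Bcoercive} with $(u, v, \phi) = (\hat u_h, \hat v_h, \hat \phi_h)$, which reduces matters to showing the middle quantity dominates $\|(\hat u - \hat u_h, \hat v - \hat v_h, \hat \phi - \hat \phi_h)\|_{Y^p}^p$. The ingredients are: Remark \ref{quasiremark1}(a) converting $|\hat u - \hat u_h|^p_{1,p} \leq |\hat u - \hat u_h|^2_{(1,\hat u, p)}$; Proposition \ref{poinfried} with $\Gamma = \gt$, where $(\hat u - \hat u_h)|_{\gt}$ is replaced by $((\hat u - \hat u_h)|_\pom + \hat v - \hat v_h)|_{\gt}$ since $\hat v_h \in \Whgs$ vanishes on $\gt$; a triangle inequality plus the trace embedding $W^{1-\frac{1}{p},p}(\pom) \hookrightarrow \Wg$ of Remark \ref{sobremarks}(c) to extract $\|\hat v - \hat v_h\|^p_\Wgs$; and the equivalence $\|\cdot\|^p \sim \|\cdot\|^2$ on the bounded set containing both solutions (uniform boundedness of $\hat u_h$ follows from the same Lemma \ref{Bcoercive}).

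For the second inequality I again invoke Lemma \ref{Bcoercive} and aim to bound $B(\hat u, \hat v, \hat \phi; e_u, e_v, \eta) - B(\hat u_h, \hat v_h, \hat \phi_h; e_u, e_v, \eta)$ from above, with $e_u = \hat u - \hat u_h$, etc. Testing the continuous VI at $(\hat u_h, \hat v_h, \hat \phi - \eta) \in Y^p$ (admissible since $\eta \in \Wmg$) gives $B(\hat u, \hat v, \hat \phi; e_u, e_v, \eta) \leq \Lambda(e_u, e_v, \eta) + j(\hat v_h) - j(\hat v)$. Testing the discrete VI at an arbitrary $(u_h, v_h, \phi_h) \in Y^p_h$ and decomposing the test direction as $(e_u, e_v, \eta) = (u_h - \hat u_h, v_h - \hat v_h, \phi_h - \hat \phi_h) + (r_u, r_v, r_\eta)$ with approximation residuals $r_u = \hat u - u_h$, $r_v = \hat v - v_h$, $r_\eta = \eta - (\phi_h - \hat \phi_h)$, yields by bilinearity a matching lower bound for $B(\hat u_h, \hat v_h, \hat \phi_h; e_u, e_v, \eta)$ modulo a residual $B(\hat u_h, \hat v_h, \hat \phi_h; r_u, r_v, r_\eta)$ and $j(v_h) - j(\hat v_h)$. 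Summing the two inequalities, the $j$-terms telescope into $j(v_h) - j(\hat v)$, which is bounded by $\|g\|_\infty \|v_h - \hat v\|_{L^1(\gs)} \lesssim \|v_h - \hat v\|_{L^2(\gs)}$, accounting for the $L^2(\gs)$ contribution in the theorem.

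The remaining quantity $\Lambda(r_u, r_v, r_\eta) - B(\hat u_h, \hat v_h, \hat \phi_h; r_u, r_v, r_\eta)$ is split further. Testing the continuous VI at $(\hat u \pm r_u, \hat v \pm r_v, \hat \phi \pm r_\eta)$ gives $|\Lambda(r_u, r_v, r_\eta) - B(\hat u, \hat v, \hat \phi; r_u, r_v, r_\eta)| \lesssim \|r_v\|_{L^2(\gs)}$. The remaining piece $B(\hat u, \hat v, \hat \phi; r_u, r_v, r_\eta) - B(\hat u_h, \hat v_h, \hat \phi_h; r_u, r_v, r_\eta)$ expands into the nonlinear term $\langle G'\hat u - G'\hat u_h, r_u\rangle$, treated via Remark \ref{quasiremark2}(b) as $\varepsilon|e_u|^2_{(1,\hat u, p)} + \varepsilon^{-\theta}|r_u|^2_{(1,\hat u, p)}$, and Hilbert-space bilinear terms involving $\mathcal W, \mathcal V, \mathcal K, \mathcal K'$, treated by Cauchy-Schwarz with Young's inequality. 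The $\varepsilon$-small pieces are absorbed into the left-hand side of the coercivity estimate, with the remaining approximation contributions becoming part of the infimum. The principal obstacle is that $\eta = \frac{1}{2}(e_\phi + \mathcal V^{-1}(1-\mathcal K)(e_u|_\pom + e_v))$ generally lies outside $\Whmg$, so $r_\eta$ does not shrink with mesh refinement unless controlled indirectly; continuity of $\mathcal V^{-1}(1-\mathcal K): \Wg \to \Wmg$ produces $\|r_\eta\|_\Wmg \lesssim \|\hat \phi - \phi_h\|_\Wmg + \|e_\phi\|_\Wmg + \|e_u|_\pom + e_v\|_\Wg$, where the two error contributions are absorbed via a second Young split. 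Taking the infimum over $(u_h, v_h, \phi_h) \in Y^p_h$ finishes the proof.
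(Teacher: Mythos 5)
Your architecture for the second estimate coincides with the paper's: coercivity from Lemma~\ref{Bcoercive}, the continuous and the discrete variational inequality to trade the $B$--difference for approximation residuals plus a consistency term, Young's inequality with absorption of the $\varepsilon$--terms, and the friction functional producing $\|\hat v-v_h\|_{L^2(\gs)}$. Your symmetric test $(\hat u\pm r_u,\hat v\pm r_v,\hat\phi\pm r_\eta)$ for the consistency term is a harmless variant of the paper's substitution of $(u_h,\hat v,0)$ and $(2\hat u-u_h,\hat v,0)$, which instead identifies the surviving $v$--part with $-\langle \varrho(|\nabla u|)\partial_\nu u, v_h-\hat v\rangle$ and uses $|\varrho\,\partial_\nu u|\le g$ on $\gs$; both give the same $L^2(\gs)$ bound. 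The first estimate is handled as in Lemma~\ref{monotony} in both cases.

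The gap is exactly the point you call the principal obstacle, and your repair does not close it. Write $e_u=\hat u-\hat u_h$, $d=e_u|_\pom+\hat v-\hat v_h$, $e_\phi=\hat\phi-\hat\phi_h$, so that $2\mathcal{V}\eta=\mathcal{V}e_\phi+(1-\mathcal{K})d$. The third--slot contribution to $B(\hat u,\hat v,\hat\phi;r_u,r_v,r_\eta)-B(\hat u_h,\hat v_h,\hat\phi_h;r_u,r_v,r_\eta)$ is $\langle r_\eta,\mathcal{V}e_\phi+(1-\mathcal{K})d\rangle=2\langle r_\eta,\mathcal{V}\eta\rangle$; since the $\phi$--equations are equalities, $\phi_h-\hat\phi_h\in\Whmg$ is Galerkin--orthogonal to $\mathcal{V}e_\phi+(1-\mathcal{K})d$, so this term equals $2\langle\eta,\mathcal{V}\eta\rangle$ for \emph{every} choice of $\phi_h$: a pure error--squared quantity with the fixed constant $2$. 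Consequently your Cauchy--Schwarz/Young step necessarily leaves a contribution $C\,(\|e_\phi\|_{W^{-\frac12,2}(\pom)}+\|d\|_\Wg)^2$ with $C$ of order one, which cannot be absorbed into the coercivity constant of Lemma~\ref{Bcoercive}: subtracting $2\langle\eta,\mathcal{V}\eta\rangle$ from the identity $B(\ldots;e_u,e_v,\eta)-B(\ldots;e_u,e_v,\eta)=\langle G'\hat u-G'\hat u_h,e_u\rangle+\tfrac12\langle\mathcal{W}d,d\rangle+\tfrac12\langle Sd,d\rangle+\tfrac12\langle\mathcal{V}e_\phi,e_\phi\rangle$ leaves only $\langle G'\hat u-G'\hat u_h,e_u\rangle+\langle\mathcal{W}d,d\rangle-\langle e_\phi,(1-\mathcal{K})d\rangle$, which controls neither $\|d\|_\Wg$ nor $\|e_\phi\|_{W^{-\frac12,2}(\pom)}$. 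The paper sidesteps this by never placing $\eta$ into the decomposed test direction: after ``adding $0$'' its third--slot residual is the best--approximation error $\hat\phi-\phi_h$ alone, so the $\mathcal{V}$--pairing is $\langle\hat\phi-\phi_h,\mathcal{V}e_\phi+(1-\mathcal{K})d\rangle\lesssim\varepsilon^{-1}\|\hat\phi-\phi_h\|^2_{W^{-\frac12,2}(\pom)}+\varepsilon\|e_\phi\|^2_{W^{-\frac12,2}(\pom)}+\varepsilon\|d\|^2_\Wg$ and is fully absorbable; you need to reorganize your splitting in the same way. Two minor omissions besides this: passing from $\varepsilon^{-\theta}|\hat u-u_h|^2_{(1,\hat u,p)}$ to the $Y^p$--norm inside the infimum requires Remark~\ref{quasiremark1}a) together with the a priori bound on $\|\hat u\|_\Wp$ and the restriction to $\|u_h\|_\Wp\le 2\|\hat u\|_\Wp$, and the uniform bound on $\hat u_h$ comes from coercivity of the discrete functional rather than from Lemma~\ref{Bcoercive}.
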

\begin{proof}
Let $(u,v,\phi)\in Y^p$, $(u_h,v_h,\phi_h)\in Y^p_h$. Lemma~\ref{Bcoercive} and the variational inequality imply
\begin{align*}
&|\hat u - \hat u_h|_{(1,\hat u, p)}^2 + \|(\hat u - \hat u_h)|_\pom + \hat v - \hat v_h\|_\Wg^2 + \|\hat \phi- \hat \phi_h\|_{W^{-\frac{1}{2},2}(\pom)}^2\\
&\lesssim B(\hat u, \hat v, \hat \phi; \hat u - \hat u_h, \hat v -
\hat v_h, \hat \phi-\hat \phi_h) - B(\hat u_h, \hat v_h, \hat
\phi_h;
\hat u - \hat u_h, \hat v - \hat v_h, \hat \phi-\hat \phi_h)\\
& \lesssim B(\hat u, \hat v, \hat \phi; u, v, \phi) - \Lambda(u-\hat
u, v-\hat v, \phi-\hat\phi) + j(v) - j(\hat v)\\
& \hphantom{\lesssim} +B(\hat u_h, \hat v_h, \hat \phi_h; u_h,v_h,\phi_h) -\Lambda(u_h-\hat u_h, v_h-\hat v_h, \phi_h-\hat \phi_h) + j(v_h) - j(\hat v_h)\\
& \hphantom{\lesssim} - B(\hat u_h, \hat v_h, \hat \phi_h; \hat u, \hat v, \hat
\phi) - B(\hat u, \hat v, \hat \phi; \hat u_h, \hat v_h, \hat
\phi_h)
\end{align*}
Setting $(u,v,\phi)=(\hat u_h, \hat v_h, \hat \phi_h)$ and adding
$0$, the right hand side turns into
\begin{align*}
& B(\hat u, \hat v, \hat \phi; u_h-\hat u, v_h-\hat
v, \phi_h-\hat\phi) - \Lambda(u_h-\hat
u, v_h-\hat v, \phi_h-\hat\phi) + j(v_h) - j(\hat v)\\
&  \quad +B(\hat u, \hat v, \hat \phi; \hat u-u_h, \hat v-v_h,
\hat\phi-\phi_h)-B(\hat u_h, \hat v_h, \hat \phi_h; \hat u-u_h,\hat
v-v_h,\hat \phi-\phi_h).
\end{align*}
We first consider the friction terms:
$$j(v_h) - j(\hat v) = \int_\gs g(|v_h| - |\hat v|) \leq \int_\gs g(|v_h - \hat v|) \leq \|g\|_{L^2(\gs)}\|v_h - \hat v\|_{L^2(\gs)}.$$
The last two terms are bounded using Remark~\ref{quasiremark2}b and
Cauchy-Schwarz:
\begin{eqnarray*}
\langle G'\hat u - G'\hat u_h, \hat u - u_h \rangle &\lesssim&
\varepsilon |\hat u-\hat u_h|_{(1,\hat u,p)}^2 +
\varepsilon^{-\theta} |\hat u-u_h|^2_{(1,\hat u,p)},\\ & \lesssim &
\varepsilon |\hat u_h - \hat u|_{(1,\hat u, p)}^2 +
\varepsilon^{-\theta} C(|\hat u|_{1,p}, |u_h|_{1,p}) |u_h-\hat
u|_{1,p}^2
\end{eqnarray*}
for sufficiently small $\varepsilon >0$. We may replace $C(|\hat
u|_{1,p}, |u_h|_{1,p})$ by an honest constant noting that the
coercivity of our functional gives an a priori bound on $\|\hat
u\|_{\Wp}$ and that we can restrict to those $u_h$ satisfying
$\|u_h\|_{\Wp} \leq 2 \|\hat u\|_{\Wp}$. Moreover,
\begin{align*}
& \langle \mathcal{W}((\hat u - \hat u_h)|_\pom +
\hat v -
\hat v_h) + (1-\mathcal{K}') (\hat \phi-\hat \phi_h), (\hat u - u_h)|_\pom + \hat v - v_h\rangle\\
& \lesssim \varepsilon \|(\hat u - \hat u_h|_\pom + \hat v - \hat
v_h\|^2_{\Wg} +\varepsilon \|\hat \phi-\hat
\phi_h\|_{W^{-\frac{1}{2},2}(\pom)}^2 \\
&\hphantom{\lesssim} + \varepsilon^{-1} \|\hat u - u_h)\|^2_{\Wg} + \varepsilon^{-1}
\|\hat v - v_h\|^2_{\Wg},
\end{align*}
and
\begin{align*}
&\langle \hat \phi - \phi_h, \mathcal{V}(\hat \phi
- \hat \phi_h) + (1-\mathcal{K})((\hat u - \hat u_h)|_\pom + \hat v
- \hat
v_h)\rangle\\
&\lesssim \varepsilon^{-1}\|\hat \phi -
\phi_h\|^2_{W^{-\frac{1}{2},2}(\pom)} +\varepsilon \|\hat \phi -
\hat \phi_h\|^2_{W^{-\frac{1}{2},2}(\pom)}+ \varepsilon \|(\hat u -
\hat u_h)|_\pom + \hat v - \hat v_h\|^2_{\Wg}.
\end{align*}
Substituting $(u,v, \phi) = (u_h, \hat v,0)$ and $(u,v,\phi)=(2 \hat
u - u_h, \hat v, 0)$ into the variational inequality on $Y^p$ and
using that also the $\phi$ part is really an equality, the remaining
two terms reduce to
\begin{align*}
&\langle-t_0 - \mathcal{W} u_0+ \mathcal{W}(\hat
u|_\pom+\hat v) +
(\mathcal{K}'-1) \hat \phi, v_h - \hat v\rangle \\
&= - \langle t_0 - S(\hat u|_\pom + \hat v-u_0), v_h - \hat
v\rangle \\
&= - \langle \varrho(|\nabla u|)
\partial_\nu u, v_h - \hat v\rangle \ \leq\ \|g\|_{L^2(\gs)}\|v_h -
\hat v\|_{L^2(\gs)}.
\end{align*}
Applying these various estimates to the terms of the right hand
side, the assertion follows from
$$\|\hat u - \hat u_h,\hat v - \hat v_h, \hat \phi - \hat \phi_h\|_{Y^p}^p \lesssim |\hat u - \hat u_h|_{(1,\hat u, p)}^2 + \|(\hat u - \hat u_h)|_\pom + \hat v - \hat v_h\|_\Wg^2+\|\hat \phi - \hat \phi_h\|^2_{W^{-\frac{1}{2},2}(\pom)}$$
as in Lemma~\ref{monotony}.
\end{proof}

In the nondegenerate case $\delta = 0$, we essentially recover the estimates for uniformly elliptic operators from \cite{cag,mast}.

\begin{corollary} \label{ellapriori}
For $\delta = 0$, we obtain
$$\|\hat u - \hat u_h, \hat v - \hat v_h,\hat \phi - \hat
\phi_h\|_{Y^2}^2\ \lesssim \ \inf_{(u_h,v_h, \phi_h) \in Y^p_h}
\|\hat u - u_h, \hat v - v_h, \hat \phi - \phi_h\|_{Y^p}^2 + \|\hat
v-v_h\|_{L^2(\gs)}$$ uniformly in $h<h_0$
\end{corollary}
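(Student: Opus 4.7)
The plan is that this corollary follows from Theorem~\ref{apriori} almost immediately by substituting the improved quasi-norm estimate available in the nondegenerate regime. Theorem~\ref{apriori} already establishes that the middle expression
\[
|\hat u - \hat u_h|_{(1,\hat u, p)}^2 + \|(\hat u - \hat u_h)|_\pom + \hat v - \hat v_h\|_\Wg^2 + \|\hat \phi- \hat \phi_h\|_{W^{-\frac{1}{2},2}(\pom)}^2
\]
is dominated by the right-hand side of the corollary, so the only thing left is to bound the $Y^2$-norm squared by this same expression. In the general case one only has $|\hat u - \hat u_h|_{1,p}^p \leq |\hat u - \hat u_h|_{(1,\hat u, p)}^2$ from Remark~\ref{quasiremark1}a, which forces the $p$-th power on the left of Theorem~\ref{apriori}. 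For $\delta = 0$, however, Remark~\ref{quasiremark1}b upgrades this to the quadratic bound $|\hat u - \hat u_h|_{1,2}^2 \leq |\hat u - \hat u_h|_{(1,\hat u, p)}^2$, which is precisely what is needed to control the $W^{1,2}(\om)$-seminorm directly.

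Next, to recover the full $W^{1,2}(\om)$-norm from this seminorm, I would invoke Friedrichs' inequality (Proposition~\ref{poinfried}) with some $\Gamma \subset \pom$ of positive measure: the resulting $L^1(\Gamma)$ trace contribution is bounded by $\|(\hat u - \hat u_h)|_\pom\|_{\Wg}$ via the continuous embedding $W^{\frac12,2}(\pom) \hookrightarrow L^2(\pom) \hookrightarrow L^1(\pom)$. Splitting on $\gs$ and $\gt$ and using that $\hat v - \hat v_h$ is supported in $\gs$, the triangle inequality
\[
\|(\hat u - \hat u_h)|_\pom\|_{\Wg} \leq \|(\hat u - \hat u_h)|_\pom + \hat v - \hat v_h\|_{\Wg} + \|\hat v - \hat v_h\|_{\Wgs}
\]
reduces matters to bounding $\|\hat v - \hat v_h\|_{\Wgs}$ by the middle expression.

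For this last step, which I expect to be the subtlest, I would proceed exactly as in the proof of Lemma~\ref{monotony}: use the reverse triangle inequality to obtain
\[
\|\hat v - \hat v_h\|_{\Wgs} \leq \|(\hat u - \hat u_h)|_\pom + \hat v - \hat v_h\|_{\Wg} + \|(\hat u - \hat u_h)|_\gs\|_{W^{\frac12,2}(\gs)},
\]
then control the trace term by $\|\hat u - \hat u_h\|_{W^{1,2}(\om)}$. To avoid a circular bound, I would absorb this term into the $W^{1,2}$-seminorm estimate already established above by choosing the constants in the Friedrichs step small enough, exactly as is done near the end of the proof of Lemma~\ref{monotony}. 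The main obstacle is this delicate book-keeping: the Friedrichs step and the triangle-inequality step each feed a small amount of $\|\hat u - \hat u_h\|_{W^{1,2}}$ or $\|\hat v - \hat v_h\|_{\Wgs}$ back into the other, so one must either solve a small linear system for the constants or introduce an $\varepsilon$ and absorb, but nothing more substantial is required. The $W^{-\frac12,2}(\pom)$ component of the $Y^2$-norm appears literally in the middle expression, so no extra work is needed there.
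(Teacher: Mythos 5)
Your proposal is correct and follows essentially the same route as the paper, whose entire proof is the one-line instruction to use Remark~\ref{quasiremark1}b) to bound $|\hat u_h-\hat u|_{(1,\hat u,p)}^2$ from below by $|\hat u_h-\hat u|_{1,2}^2$ in Theorem~\ref{apriori}; the remaining recovery of the full $Y^2$-norm via Friedrichs and the triangle inequality is exactly the ``as in Lemma~\ref{monotony}'' step that the paper leaves implicit. One small simplification for your bookkeeping worry: taking $\Gamma=\gt$ in the Friedrichs step (as Lemma~\ref{monotony} effectively does) makes the trace term controllable by $\|(\hat u-\hat u_h)|_\pom+\hat v-\hat v_h\|_\Wg$ directly, since $\hat v-\hat v_h$ vanishes on $\gt$, so no circular absorption is needed.
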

\begin{proof}
Use~\ref{quasiremark1}b) to estimate $|\hat u_h-\hat u|_{(1,\hat u,p)}$ in Theorem~\ref{apriori} from below.
\end{proof}
\section{A posteriori error estimate}\label{sec:post}

Denote by $$(e, \tilde e, \epsilon) = (\hat u - \hat u_h, \hat v -
\hat v_h, \hat \phi - \hat \phi_h) \in Y^p$$ the error of the
Galerkin approximation, and let
$2\nu=\epsilon+\mathcal{V}^{-1}(1-\mathcal{K})(e|_\pom+\tilde e)$.
Our basic a posteriori estimate is the following.
\begin{lemma} \label{abstractapost}
For all $(e_h, \tilde e_h, \nu_h) \in Y^p_h$
\begin{align*}
&|e|_{(1,\hat u, p)}^2 + \|e|_\pom + \tilde e\|_\Wg^2 + \|\epsilon\|_{W^{-\frac{1}{2},2}(\pom)}^2 \\
&\lesssim \Lambda(e - e_h,\tilde e - \tilde e_h, \nu-\nu_h) +
j(\tilde e_h +\hat v_h) - j(\hat v) \\
 &\hphantom{\lesssim}-B(\hat u_h, \hat v_h, \hat
 \phi_h; e
-e_h, \tilde e - \tilde e_h, \nu-\nu_h)\\
&= \int_\om f (e - e_h)-\langle G'\hat u_h, e - e_h\rangle+\int_\gs
g (|\tilde e_h +\hat v_h| - |\tilde e
+\hat v_h|)\\
 &\hphantom{\lesssim} -\langle \nu-\nu_h, \mathcal{V} \hat
\phi_h + (1-\mathcal{K})(\hat u_h|_\pom+\hat v_h-u_0)\rangle\\
&\hphantom{\lesssim}+\langle t_0-\mathcal{W}(\hat u_h|_\pom+\hat v_h-u_0) -
(\mathcal{K}'-1) \hat \phi_h, (e -e_h)|_\pom +\tilde e - \tilde
e_h\rangle.
\end{align*}
\end{lemma}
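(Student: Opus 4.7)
My plan is to start from the coercivity estimate of Lemma~\ref{Bcoercive} applied to $(\hat u,\hat v,\hat\phi)$ and $(\hat u_h,\hat v_h,\hat\phi_h)$: the associated auxiliary multiplier $\eta$ in that lemma reduces precisely to $\nu$ by construction, so the left--hand side is controlled by
\[
B(\hat u,\hat v,\hat\phi;e,\tilde e,\nu) - B(\hat u_h,\hat v_h,\hat\phi_h;e,\tilde e,\nu).
\]
I would then insert a telescoping $(e,\tilde e,\nu) = (e-e_h,\tilde e-\tilde e_h,\nu-\nu_h)+(e_h,\tilde e_h,\nu_h)$ and use Galerkin orthogonality coming from both variational inequalities to bound the pieces.

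The structural point I would exploit is that both the continuous and the discrete VI in fact degenerate to equalities in the $u$-- and $\phi$--slots (the $u$--slot since the VI may be tested with $u=\hat u\pm\bar u$ while keeping $v=\hat v$; the $\phi$--slot because the $\mathcal{V}$--equation is genuinely an identity); only the $v$--slot carries the unilateral friction term $j$. Consequently, testing the continuous VI with $(u,v,\phi)=(\hat u_h,\hat v_h,\hat\phi_h)$ and adding the continuous $\phi$--identity evaluated at $\nu$ gives
\[
B(\hat u,\hat v,\hat\phi;e,\tilde e,\nu)-\Lambda(e,\tilde e,\nu)\leq j(\hat v_h)-j(\hat v),
\]
while the discrete VI tested with $v_h=\hat v_h+\tilde e_h$, combined with the discrete Galerkin identities in the $u$-- and $\phi$--slots evaluated at $e_h$ and $\nu_h$, yields the symmetric bound
\[
\Lambda(e_h,\tilde e_h,\nu_h)-B(\hat u_h,\hat v_h,\hat\phi_h;e_h,\tilde e_h,\nu_h)\leq j(\hat v_h+\tilde e_h)-j(\hat v_h).
\]

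Adding these two estimates into the coercivity inequality, the intermediate $j(\hat v_h)$ terms cancel, and regrouping according to $(e-e_h,\tilde e-\tilde e_h,\nu-\nu_h)$ versus $(e_h,\tilde e_h,\nu_h)$ delivers the first (abstract) form of the claim. The second, more explicit, expression then follows by writing out the definitions of $\Lambda$ and $B(\hat u_h,\hat v_h,\hat\phi_h;\cdot)$ and using $\hat v_h+\tilde e=\hat v$ to rewrite the friction difference as $\int_\gs g(|\tilde e_h+\hat v_h|-|\tilde e+\hat v_h|)$.

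The main step to get right is the bookkeeping of the equality--versus--inequality structure: the cancellation of $j(\hat v_h)$ depends on testing the discrete VI with the one--sided choice $v_h=\hat v_h+\tilde e_h$ rather than any symmetric perturbation, which is precisely what turns $\tilde e_h\in\Whgs$ into an essentially free increment. The admissibility of $\nu\in\Wmg$ and $\nu_h\in\Whmg$ as test functions in the respective $\phi$--identities is immediate from the mapping properties of $\mathcal V^{-1}(1-\mathcal K)$ recorded in Remark~\ref{sobremarks}.
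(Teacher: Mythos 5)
Your argument coincides with the paper's proof: both start from Lemma~\ref{Bcoercive} applied with the test triple $(\hat u_h,\hat v_h,\hat\phi_h)$ (so that $\eta$ becomes $\nu$), then bound the resulting $B$--difference by testing the continuous inequality at the discrete solution (with the $\phi$--slot identity evaluated at $-\nu$) and adding the discrete inequality tested at $(\hat u_h+e_h,\hat v_h+\tilde e_h,\hat\phi_h+\nu_h)$, after which linearity in the test slots and the cancellation of $j(\hat v_h)$ give the stated bound. The only cosmetic difference is that you phrase the discrete step as separate Galerkin identities in the $u$-- and $\phi$--slots plus a one--sided $v$--test, which the paper subsumes into a single application of the discrete variational inequality.
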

\begin{proof}
Lemma~\ref{Bcoercive}, the continuous and the discretized
variational inequality imply
\begin{align*}
&|\hat u - u|_{(1,\hat u, p)}^2 + \|(\hat u - u)|_\pom + \hat v - v\|_\Wg^2 + \|\hat \phi- \phi\|_{W^{-\frac{1}{2},2}(\pom)}^2 \\
& \lesssim  B(\hat u,\hat v,\hat \phi;\hat u- \hat u_h, \hat v-
\hat v_h, \nu) - B(\hat u_h,\hat v_h,\hat \phi_h;\hat u- \hat u_h,
\hat v- \hat v_h, \nu)\\
&\lesssim \Lambda(\hat u- \hat u_h, \hat v- \hat v_h, \nu) +j(\hat
v_h)-j(\hat v)-B(\hat u_h,\hat v_h,\hat \phi_h;\hat u- \hat u_h,
\hat v- \hat v_h, \nu)\\
& \lesssim  \Lambda(\hat u - \hat u_h - (u_h-\hat u_h),\hat v -
\hat v_h - (v_h-\hat v_h), \nu-\nu_h) + j(v_h)-j(\hat v) \\
 &\hphantom{\lesssim} - B(\hat u_h, \hat v_h, \hat \phi_h; \hat u - \hat u_h
-(u_h-\hat u_h), \hat v - \hat v_h - (v_h-\hat v_h), \nu-\nu_h).
\end{align*}
Note that the variational inequalities are identities when
restricted to the $\phi$-variable. The claim follows by setting $e_h
= u_h - \hat u_h$ and $\tilde e_h = v_h - \hat v_h$.
\end{proof}
Simplifying the right hand side along the lines of \cite{cly} leads to a gradient recovery scheme in the interior with a residual type estimator on the boundary. With a straight forward modification of \cite{ly}, also a method purely based on residual type estimates could be justified.

For $1< p<\infty$ and $0 \leq \delta \leq 1$, define $$G_{p,\delta}(x,y) = |y|^2 \omega(x,y)^{p-2}=|y|^2[(|x|+|y|)^{\delta}(1+|x|+|y|)^{1-\delta}]^{p-2}$$
whenever $|x|+|y|>0$ and $0$ otherwise.
As in \cite{cly}, our analysis will be based on the following consequences of the monotony and convexity properties of $G_{p,\delta}$.

\begin{lemma} \label{gpoinc}
Assume that $\om$ is connected. Let $q$ be a continuous linear form on $W^{1,p}(\om)$ with $\R \cap \ker q = \{0\}$, where $\R$ is identified with the space of constant functions on $\om$. Then for any $1<p<\infty$ there exists $C_P = C_P(p,q,\om)>0$ such that for all $a\geq 0$ and $u \in W^{1,p}(\om)$,
$$\int_\om G_{p,\delta}(a,u) \leq C_P \left(G_{p,\delta}(a, q(u)) + \int_\om G_{p,\delta}(a,|\nabla u|)\right).$$
\end{lemma}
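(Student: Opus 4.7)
The inequality is a weighted Poincar\'e--Friedrichs estimate whose integrand $G_{p,\delta}(a,\cdot)$ interpolates between $L^2$--behaviour for $|y|\ll a$ and $L^p$--behaviour for $|y|\gg a$. My plan is to reduce the claim to a weighted Poincar\'e inequality on $\ker q$ and then establish that inequality uniformly in $a\ge 0$ by a compactness argument.

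\emph{Normalization, quasi--subadditivity and reduction.} Since $\R\cap\ker q=\{0\}$, we have $q(1)\ne 0$, and after rescaling $q$ we may assume $q(1)=1$. Decompose $u=c+u_0$ with $c=q(u)\in\R$ a constant function and $u_0:=u-c\in\ker q$; then $\nabla u_0=\nabla u$. A short calculation (checking the two factors of $\omega$ separately for $\delta\in[0,1]$) gives $\omega(a,2y)\le 2\omega(a,y)$, and combined with $(s+t)^2\le 4\max(s,t)^2$, treating $p\ge 2$ and $1<p<2$ separately via the monotonicity of $\omega(a,\cdot)^{p-2}$, yields
$$G_{p,\delta}(a,s+t)\le C_p\bigl[G_{p,\delta}(a,s)+G_{p,\delta}(a,t)\bigr],\qquad s,t\ge 0,$$
with $C_p$ depending only on $p$. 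Applied pointwise to $|u(x)|\le |c|+|u_0(x)|$ and integrated, this reduces the lemma to the uniform weighted Poincar\'e estimate
$$\int_\om G_{p,\delta}(a,u_0)\le C\int_\om G_{p,\delta}(a,|\nabla u_0|)\qquad (u_0\in W^{1,p}(\om)\cap\ker q)\qquad(\ast)$$
with $C$ independent of $a\ge 0$.

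\emph{Proof of $(\ast)$ by contradiction and compactness.} Suppose $(\ast)$ fails uniformly: extract $a_n\ge 0$ and $u_n\in\ker q$ with $\int_\om G_{p,\delta}(a_n,u_n)=1$ and $\int_\om G_{p,\delta}(a_n,|\nabla u_n|)\to 0$, and pass to a subsequence with $a_n\to a_\infty\in[0,\infty]$. If $a_\infty<\infty$, the uniform lower bound $G_{p,\delta}(a,y)\gtrsim\min(|y|^2,|y|^p)$ bounds $u_n$ in $W^{1,p}(\om)$; Rellich compactness, strong $L^p$--convergence of the gradients to $0$, connectedness of $\om$, and continuity of $q$ together force $u_n\to 0$ in $L^p(\om)$, contradicting the normalization through continuity of $G_{p,\delta}(\cdot,\cdot)$. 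If $a_\infty=\infty$, split $\om$ according to whether $|u_n|$ and $|\nabla u_n|$ are $\le a_n$ or not: on the sub--$a_n$ sets $\omega(a_n,\cdot)^{p-2}\sim[a_n^\delta(1+a_n)^{1-\delta}]^{p-2}$, so after rescaling $u_n$ by $\beta_n:=[a_n^\delta(1+a_n)^{1-\delta}]^{(p-2)/2}$ the modular integrals reduce to $L^2$--integrals for which a Poincar\'e--Wirtinger argument applies; on the super--$a_n$ sets $\omega(a_n,\cdot)^{p-2}\sim|\cdot|^{p-2}$ and the standard $L^p$--Poincar\'e inequality on $\ker q$ controls them. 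Combining the two contributions contradicts the normalization.

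\emph{Main obstacle.} The delicate step is $(\ast)$ in the regime $a_n\to\infty$. The weight $\omega(a,\cdot)^{p-2}$ undergoes a transition near $|y|\sim a$, mixing $L^2$-- and $L^p$--scales in an $a$--dependent manner, and one has only $q\in(W^{1,p}(\om))'$ at one's disposal, so the $L^2$--Poincar\'e inequality on $\ker q$ must be recovered from $L^2$ Poincar\'e--Wirtinger combined with the auxiliary estimate $|\bar u_0|\lesssim \|q\|\,\|\nabla u_0\|_p$ (using $q(1)=1$, the $L^p$ Poincar\'e--Wirtinger inequality, and continuity of $q$ on $W^{1,p}$), where $\bar u_0$ is the spatial average. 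Matching the $L^2$-- and $L^p$--contributions at the scale $(1+a)^{p-2}$ so that the resulting Poincar\'e constant is uniform in $a$ is the principal technical difficulty of the argument.
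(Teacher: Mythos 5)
The first thing to note is that the paper does not prove this lemma: its ``proof'' is the one--line citation of \cite{cly}, Lemma 4.1 and Remark 4.3. You are therefore attempting something the authors deliberately outsource, and your reduction steps are sound as far as they go: normalizing $q(1)=1$, splitting $u=q(u)+u_0$ with $u_0\in\ker q$, and invoking the quasi--subadditivity $G_{p,\delta}(a,s+t)\lesssim_p G_{p,\delta}(a,s)+G_{p,\delta}(a,t)$ (which indeed holds, by the doubling property $\omega(a,2y)\le 2\,\omega(a,y)$ together with the monotonicity of $G_{p,\delta}(a,\cdot)$ in its second argument) correctly reduce the lemma to your uniform modular Poincar\'e inequality $(\ast)$. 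But $(\ast)$ \emph{is} the lemma, and your argument for it has two genuine gaps.

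First, the normalization $\int_\om G_{p,\delta}(a_n,u_n)=1$ is not legitimately available: $G_{p,\delta}(a,\cdot)$ is not homogeneous, and rescaling $u_n\mapsto t_nu_n$ multiplies the two modulars $\int_\om G_{p,\delta}(a_n,u_n)$ and $\int_\om G_{p,\delta}(a_n,|\nabla u_n|)$ by factors that each lie anywhere between $t_n^{\min(2,p)}$ and $t_n^{\max(2,p)}$, depending pointwise on the size of $|u_n|$ and $|\nabla u_n|$ relative to $a_n$. A sequence violating $(\ast)$ with ratio $>n$ therefore need not produce, after normalization, a sequence with $\int_\om G_{p,\delta}(a_n,|\nabla u_n|)\to0$; the loss is a factor $t_n^{|p-2|}$ which is uncontrolled. (Also, the ``uniform'' lower bound $G_{p,\delta}(a,y)\gtrsim\min(|y|^2,|y|^p)$ fails for $1<p<2$ as $a\to\infty$; it is rescued in your first case only because there $a_n$ stays bounded.) Second, and more seriously, the regime $a_n\to\infty$ --- which you yourself identify as the crux --- is described rather than proved. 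Splitting $\om$ into the sets where $|u_n|\le a_n$ and where $|\nabla u_n|\le a_n$ yields two unrelated, arbitrary measurable sets on which no Poincar\'e--Wirtinger inequality is available: there is no estimate bounding $\int_{\{|u_n|\le a_n\}}|u_n|^2$ by $\int_{\{|\nabla u_n|\le a_n\}}|\nabla u_n|^2$. The difficulty is real and not a routine omission: even the direct route for $p\ge2$ (expanding $\omega(a,y)^{p-2}$ into a sum $\sum_j c_j(a)\,|y|^{r_j-2}$ with $r_j\in[2,p]$ and applying $L^{r_j}$ Poincar\'e inequalities on $\ker q$) founders on exactly the matching you mention, since controlling the mean of $u_0$ through $q$ costs a $\|\nabla u_0\|_{L^p}$ term, and $c_j(a)\|\nabla u_0\|_{L^p}^{r_j}$ is not dominated by $c_j(a)\|\nabla u_0\|_{L^{r_j}}^{r_j}+\|\nabla u_0\|_{L^p}^p$ uniformly in $a$. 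Your closing paragraph concedes that this matching is ``the principal technical difficulty''; a concession is not a proof. Either carry out that step in detail or do what the paper does and quote \cite{cly}.
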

\begin{proof}
Cf. \cite{cly}, Lemma 4.1 and its generalization in Remark 4.3.
\end{proof}

\begin{lemma} \label{combilemma}
For any $d, k \in \N$ there is $C_\Sigma=C_\Sigma(p,d,k)>0$ such that for all $a_1, a_2, \dots , a_k \in \R^d$
$$\sum_{j=1}^k\sum_{l=1}^{j-1} G_{p,\delta}(a_j, a_j-a_l) \lesssim C_\Sigma \sum_{j=1}^{k-1} \min_{1 \leq m \leq k} G_{p,\delta}(a_m, a_{j+1} - a_j).$$
\end{lemma}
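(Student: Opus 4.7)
The plan is to combine a telescoping identity in the second argument of $G_{p,\delta}$ with a controlled change of base point in its first. For any $l<j\leq k$ I telescope $a_j-a_l=\sum_{i=l}^{j-1}(a_{i+1}-a_i)$. Since, for fixed $x$, the function $y\mapsto G_{p,\delta}(x,y)=|y|^{2}\omega(x,y)^{p-2}$ is equivalent to a Young function in $|y|$ with the $\Delta_{2}$--condition uniformly in $x$, a straightforward computation based on $\bigl|\sum_i y_i\bigr|\leq\sum_i|y_i|$ and the monotonicity of $\omega(t)=t^{\delta}(1+t)^{1-\delta}$ yields
\[
G_{p,\delta}(x,y_1+\cdots+y_N)\;\lesssim\;C(p,N)\sum_{i=1}^{N}G_{p,\delta}(x,y_i).
\]
Applied with $N=j-l\leq k$, this reduces the left--hand side of the claim to a linear combination of at most $k^{3}$ terms of the form $G_{p,\delta}(a_j,a_{i+1}-a_i)$ with $1\leq l\leq i\leq j-1\leq k-1$.

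Next I change the base point from $a_j$ to a minimizer $a_{m^{*}(i)}$ of $G_{p,\delta}(a_m,a_{i+1}-a_i)$ over $1\leq m\leq k$. A direct computation shows that $\omega(t)$ is concave on $[0,\infty)$ with $\omega(0)\in\{0,1\}$, hence subadditive, so $\omega(a_j,y)\leq\omega(a_m,y)+\omega(a_j-a_m,0)$. Combined with $|a_j-a_m|\leq\sum_{r=1}^{k-1}|a_{r+1}-a_r|$ and the elementary inequality $(\alpha+\beta)^{p-2}\lesssim\alpha^{p-2}+\beta^{p-2}$ valid for $p\geq 2$, I obtain
\[
G_{p,\delta}(a_j,a_{i+1}-a_i)\;\lesssim\;G_{p,\delta}(a_{m^{*}(i)},a_{i+1}-a_i)+\sum_{r=1}^{k-1}|a_{i+1}-a_i|^{2}\,\omega(a_{r+1}-a_r,0)^{p-2}.
\]
For each cross term I compare the two consecutive differences: if $|a_{r+1}-a_r|\geq|a_{i+1}-a_i|$, then $|a_{i+1}-a_i|^{2}\omega(a_{r+1}-a_r,0)^{p-2}\leq G_{p,\delta}(0,a_{r+1}-a_r)$, and otherwise (using that $\omega$ is nondecreasing in $|y|$) the cross term is at most $G_{p,\delta}(0,a_{i+1}-a_i)$. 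Since $\omega$ is also nondecreasing in its first argument, $G_{p,\delta}(0,y)\leq\min_{1\leq m\leq k}G_{p,\delta}(a_m,y)$, so each cross term is dominated by the right--hand side of the claim in the required form.

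Summing over the $O(k^{3})$ triples $(j,l,i)$ produces the assertion with a constant $C_{\Sigma}=C(p,d,k)$. The main obstacle is the change of base point: $G_{p,\delta}(a_j,y)$ can be much larger than $\min_m G_{p,\delta}(a_m,y)$ when $|a_j|\gg|a_m|$ and $p>2$, so the excess $|a_j-a_m|$ must be routed back onto the right--hand side through the consecutive--difference structure. The case $p=2$, where $G_{2,\delta}$ is independent of the first argument and the inequality collapses to a telescoped Cauchy--Schwarz estimate, is trivial and motivates the strategy.
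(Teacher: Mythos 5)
Your argument is correct for the range $p\geq 2$, and it is considerably more informative than what the paper offers: the paper's ``proof'' of Lemma~\ref{combilemma} is only a pointer to Lemma~4.2 and Remark~4.3 of Carstensen--Liu--Yan \cite{cly}, whereas you supply a self-contained elementary derivation. Each of your steps checks out. The quasi-triangle inequality $G_{p,\delta}(x,y_1+\dots+y_N)\lesssim C(p,N)\sum_i G_{p,\delta}(x,y_i)$ follows from monotonicity of $t\mapsto |t|^2\omega(x,t)^{p-2}$ in $|t|$ together with the doubling bound $\omega(x,Nt)\leq N\,\omega(x,t)$; the function $\phi(t)=t^\delta(1+t)^{1-\delta}$ is indeed concave (one computes $\phi''(t)=\delta(\delta-1)t^{\delta-2}(1+t)^{-\delta-1}\leq 0$) with $\phi(0)\geq 0$, hence subadditive, which justifies the base-point change $\omega(a_j,y)\leq\omega(a_{m},y)+\omega(a_j-a_{m},0)$; and the two-case comparison of $|a_{i+1}-a_i|$ against $|a_{r+1}-a_r|$, combined with $G_{p,\delta}(0,y)\leq\min_m G_{p,\delta}(a_m,y)$ (valid because $p\geq 2$ makes $G_{p,\delta}$ nondecreasing in its first argument), routes every cross term onto a summand of the right-hand side. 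Two small caveats. First, several of your inequalities --- $(\alpha+\beta)^{p-2}\lesssim\alpha^{p-2}+\beta^{p-2}$, the monotonicity in the first argument, and the case distinction --- reverse or degenerate for $1<p<2$, so your proof covers only $p\geq 2$; since the paper fixes $p\geq 2$ from Section~\ref{formulation} onward this is harmless here, but the lemma in \cite{cly} is stated for general $p$, and the sub-$2$ case needs a genuinely different (in fact easier, since $G_{p,\delta}$ is then decreasing in $|x|$) comparison. Second, the appeal to ``a Young function satisfying $\Delta_2$ uniformly in $x$'' is slightly loose as phrased; it would be cleaner to note directly that only monotonicity in $|y|$ and the uniform doubling constant $G_{p,\delta}(x,2y)\leq 2^{p}G_{p,\delta}(x,y)$ are used. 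Neither point is a gap in the argument as it applies to this paper.
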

\begin{proof}
Cf. \cite{cly}, Lemma 4.2 and its generalization in Remark 4.3.
\end{proof}

Even though Lemma~\ref{gradlemma} and Lemma~\ref{flemma} hold for any $1<p<\infty$ with minor modifications of the proofs (see \cite{cly} for a similar discussion), we will from now on concentrate on the range $2 \leq p <\infty$ relevant to our transmission problem.

\begin{definition}
Let $z \in D$ be a node of the triangulation $\mathcal{T}_h$ and
$\varphi_z \in \Whp$ the associated nodal basis function. Let
$\omega_z = \{x \in \om : \varphi_z(x) >0\}$ be the interior of the
support of $\varphi_z$. The interpolation operator $\pi : \Wp \to
\Whp$ is defined as
$$ \pi u = \sum_{z \in D} u_z \varphi_z, \qquad u_z = \int_\om \varphi_z u / \int_\om \varphi_z.$$
\end{definition}

\begin{lemma}\label{gradlemma}
Let $\mathcal{E}_h^z = \{l \in \mathcal{E}_h: l=\bar K_i \cap \bar
K_j \text{ for some } K_i, K_j\subset \omega_z\}$. Given $u_h \in
\Whp$, let $\left[\partial_{\nu_\mathcal{E}} u_h\right]_l$ denote
the jump of the normal derivative across the inner edge $l$ of the
triangulation. Then, if $v \in \Wp$ and $K \in \mathcal{T}_h$, the
following estimate holds:
\begin{align*}
&\int_K G_{p,\delta}(\nabla u_h, h_K^{-1}(v-\pi v)) + \int_K G_{p,\delta}(\nabla u_h, \nabla(v-\pi v)) \\
&\lesssim\sum_{z \in D \cap \bar K} \left(\int_{\omega_z}
G_{p,\delta}(\nabla u_h, \nabla v) +  \sum_{l \in \mathcal{E}_h^z}
\min_{\bar K' \cap l \neq \emptyset} \int_{\omega_z}
G_{p,\delta}(\nabla u_h|_{K'}, \left[\partial_{\nu_\mathcal{E}}
u_h\right]_l)\right).
\end{align*}
\end{lemma}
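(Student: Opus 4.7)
\noindent\emph{Plan of proof.} My strategy is to adapt the argument of \cite{cly}, Lemma~4.4, to the degenerate weight $G_{p,\delta}$ used here. The first step is to localize the interpolation error on $K$. Since $\sum_{z \in D \cap \bar K}\varphi_z \equiv 1$ on $K$, we have $v - \pi v = \sum_{z \in D \cap \bar K}\varphi_z (v - v_z)$ on $K$. Combining $\varphi_z \in [0,1]$ with the inverse estimate $\|\nabla\varphi_z\|_\infty \lesssim h_K^{-1}$, the monotonicity of $y \mapsto G_{p,\delta}(a,y)$ in $|y|$ (which holds for $p \geq 2$), and the cross-term inequality of Remark~\ref{quasiremark1}c, the left hand side is controlled by a sum over $z \in D \cap \bar K$ of the two expressions
$$\int_{\omega_z} G_{p,\delta}\bigl(\nabla u_h|_K,\, h_K^{-1}(v - v_z)\bigr) \quad \text{and} \quad \int_{\omega_z} G_{p,\delta}\bigl(\nabla u_h|_K,\, |\nabla v|\bigr).$$

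For each vertex $z$, I would then apply the weighted Poincar\'e inequality of Lemma~\ref{gpoinc} on the patch $\omega_z$, with linear functional $q(w) = w_z$ (whose kernel intersects $\R$ trivially) and parameter $a = |\nabla u_h|_{K^*}|$ for some fixed reference element $K^* \subset \omega_z$. This yields
$$\int_{\omega_z} G_{p,\delta}\bigl(\nabla u_h|_{K^*},\, h_K^{-1}(v - v_z)\bigr) \lesssim \int_{\omega_z} G_{p,\delta}\bigl(\nabla u_h|_{K^*},\, |\nabla v|\bigr),$$
using $h_K \simeq h_{K^*}$ by shape-regularity.

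The main obstacle, which I expect to be the most delicate step, is the mismatch between $\nabla u_h|_K$ appearing on the left hand side and $\nabla u_h|_{K^*}$ produced by Poincar\'e: the weight $\omega(a,y)^{p-2}$ depends on both arguments and cannot simply be rewritten. To bridge the two I would fix a chain $K = K_1, K_2, \ldots, K_s = K^*$ of triangles in $\omega_z$ with each $\bar K_j \cap \bar K_{j+1}$ a common edge $l_j \in \mathcal{E}_h^z$. Because $u_h$ is continuous and piecewise linear, the tangential component of $\nabla u_h$ is continuous across $l_j$, so $\nabla u_h|_{K_{j+1}} - \nabla u_h|_{K_j}$ is controlled by the normal jump $[\partial_{\nu_{\mathcal{E}}} u_h]_{l_j}$. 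Applying Lemma~\ref{combilemma} to the sequence $\nabla u_h|_{K_1}, \ldots, \nabla u_h|_{K_s}$ then transfers the $K^*$-weighted quasi-norm to a $K$-weighted one up to boundary jump terms, with the inner $\min_{\bar K' \cap l \neq \emptyset}$ in the target bound arising from the freedom to pick the minimizing element in each summand of Lemma~\ref{combilemma}. Summing over $z \in D \cap \bar K$ and using the finite overlap of the patches (controlled by the shape-regularity constant $R$) assembles the claim; the absorption of cross terms again relies on Remark~\ref{quasiremark1}c and the monotonicity of $\omega$ in both arguments for $p\geq 2$.
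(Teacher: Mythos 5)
Your proposal is essentially correct and rests on the same two pillars as the paper's proof: the nodal decomposition $v-\pi v=\sum_{z\in D\cap\bar K}\varphi_z(v-v_z)$ followed by the weighted Poincar\'e inequality of Lemma~\ref{gpoinc} with the weighted-average functional, and then Lemma~\ref{combilemma} together with the observation that continuity of the tangential derivative of the piecewise linear $u_h$ identifies $\nabla u_h|_{K_{j+1}}-\nabla u_h|_{K_j}$ with the normal jump $[\partial_{\nu_\mathcal{E}}u_h]_{l_j}$, which is exactly how the $\min_{\bar K'\cap l\neq\emptyset}$ jump terms arise in the paper as well. The one genuinely different choice you make is the treatment of the second term $\int_K G_{p,\delta}(\nabla u_h,\nabla(v-\pi v))$: you use the product rule and the inverse estimate $\|\nabla\varphi_z\|_\infty\lesssim h_K^{-1}$ to reduce it immediately to $G_{p,\delta}(\nabla u_h|_K,|\nabla v|)$ plus the already-treated zero-order term, whereas the paper subtracts the mean value $c=\frac{1}{|K|}\int_K v$, applies an inverse estimate to the affine function $\pi v-c$, uses Jensen's inequality, and invokes Lemma~\ref{gpoinc} a second time with $q(u)=\int_K u$; your route is shorter and equally valid. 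Two small inefficiencies: the auxiliary reference element $K^*$ is unnecessary, since Lemma~\ref{gpoinc} holds for an arbitrary constant $a\geq 0$ and you can take $a=|\nabla u_h|_K|$ directly (the chaining via Lemma~\ref{combilemma} is still needed afterwards, but only once, to pass from the constant weight $\nabla u_h|_K$ to the varying weight $\nabla u_h$ in the target bound); and Remark~\ref{quasiremark1}c plays no role here --- only the convexity/quasi-triangle inequality and monotonicity of $G_{p,\delta}$ in its second argument are used, the Young-type cross-term inequality being reserved for the $f$-term in Lemma~\ref{flemma}.
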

\begin{proof}
The proof is a modification of \cite{cly}, Lemma 4.3. Concerning the
first term on the left hand side, the convexity of $G_{p,\delta}$ in
its second argument (a ``triangle inequality'') and enlarging the
domain of integration leads to
\begin{eqnarray*}\int_K G_{p,\delta}(\nabla u_h, h_K^{-1}(v-\pi v))
& = & \int_K G_{p,\delta}(\nabla u_h, \sum_{z \in D \cap \bar K}h_K^{-1}(v- v_z)\varphi_z)\\
& \lesssim & \sum_{z \in D \cap \bar K} \int_K G_{p,\delta}(\nabla u_h, h_K^{-1}(v- v_z)\varphi_z)\\
& \leq & \sum_{z \in D \cap \bar K} \int_{\omega_z}
G_{p,\delta}(\nabla u_h|_K, h_K^{-1}(v- v_z)\varphi_z).
\end{eqnarray*}
As $G_{p,\delta}(\nabla u_h|_K, \cdot)$ is increasing and
$|\varphi_z|\leq 1$, Lemma~\ref{gpoinc} with $q(u) = \int_{\omega_z}
\varphi_z u$ implies
\begin{eqnarray}
\int_{\omega_z} G_{p,\delta}(\nabla u_h|_K, h_K^{-1}(v- v_z)\varphi_z)
&\leq & \int_{\omega_z} G_{p,\delta}(\nabla u_h|_K, h_K^{-1}(v- v_z)) \nonumber\\
&\leq&  C_P \int_{\omega_z} G_{p,\delta}(\nabla u_h|_K, \nabla(v- v_z)) \label{helpfulestimate1}\\
& = & C_P \int_{\omega_z} G_{p,\delta}(\nabla u_h|_K, \nabla v)
\nonumber
\end{eqnarray}
for every term in the sum over $z \in D \cap \bar K$. To replace the
constant $\nabla u_h|_K$ by $\nabla u_h$, we repeatedly apply the
usual triangle inequality and the convexity of $G_{p,\delta}$ to
obtain
\begin{align*}
&G_{p,\delta}(\nabla u_h|_K, \nabla v) \\
&\leq G_{p,\delta}(\nabla u_h|_K, |\nabla v| + |\nabla u_h|_K-\nabla u_h|)\\
&= (|\nabla v| + |\nabla u_h|_K-\nabla u_h|)^2 (|\nabla u_h|_K|+|\nabla v| + |\nabla u_h|_K-\nabla u_h|)^{\delta(p-2)}\\
&\hphantom{=} \times (1+|\nabla u_h|_K|+|\nabla v| + |\nabla u_h|_K-\nabla u_h|)^{(1-\delta)(p-2)}\\
&\leq (|\nabla v| + |\nabla u_h|_K-\nabla u_h|)^2 (|\nabla v| + 2(|\nabla u_h|+|\nabla u_h|_K-\nabla u_h|))^{\delta(p-2)}\\
&\hphantom{=}  \times (1+|\nabla v| + 2(|\nabla u_h|+|\nabla u_h|_K-\nabla u_h|))^{(1-\delta)(p-2)}\\
&\lesssim  G_{p,\delta}(\nabla u_h, |\nabla v| + |\nabla u_h|_K-\nabla u_h|)\\
&\lesssim  G_{p,\delta}(\nabla u_h, \nabla v) + G_{p,\delta}(\nabla
u_h,\nabla u_h|_K-\nabla u_h).
\end{align*}
Altogether
\[
 \int_K G_{p,\delta}(\nabla u_h|_K, h_K^{-1}(v-\pi v))
 \lesssim \sum_{z \in D \cap \bar K}
 \int_{\omega_z}\left\{G_{p,\delta}(\nabla u_h, \nabla v) +
   G_{p,\delta}(\nabla u_h,\nabla u_h|_K-\nabla u_h)\right\}.
\]
Let $\overline{\omega}_z = \bar K_1 \cup \cdots \cup \bar K_k$.
Applying Lemma~\ref{combilemma} with $a_j = \nabla u_h|_{K_j}$, $1
\leq j \leq k$, leads to the asserted bound for the first term. For
the proof, note that the conormal derivatives of the piecewise
linear function $u_h$ are determined by its boundary values on the
corresponding edge. But $u_h \in \Whp \subset \Wp$, so the
restrictions from both sides have to coincide, and the conormal
derivative does not jump: $a_j - a_{j-1} =
[\partial_{\nu_\mathcal{E}} u_h|_{\bar K_j \cap \bar K_{j-1}}]$.

As for the second term, let $c = \frac 1 {|K|} \int_K v$. Because
\begin{eqnarray*}
\int_{K} G_{p,\delta}(\nabla u_h, \nabla(v - \pi v)) \lesssim \int_K
G_{p,\delta}(\nabla u_h, \nabla v) + \int_K G_{p,\delta}(\nabla u_h,
\nabla (\pi v-c))
\end{eqnarray*}
by convexity and the triangle inequality, it only remains to
consider the second term $\int_K G_{p,\delta}(\nabla u_h, \nabla
(\pi v-c))$. The inverse estimate
$$|\nabla(\pi v - c)|  \lesssim \frac 1 {|K|} \int_K h_K^{-1} |\pi v - c|$$
for the affine function $\pi v - c$ and Jensen's inequality show
\begin{eqnarray*}\int_K G_{p,\delta}(\nabla u_h, \nabla (\pi v-c)) & \lesssim& \int_K \frac 1 {|K|} \int_K G_{p,\delta}(\nabla u_h, h_K^{-1}(\pi v-c))\\ &=& \int_K G_{p,\delta}(\nabla u_h, h_K^{-1}(\pi v-c)).\end{eqnarray*}
However, as before $$\int_K G_{p,\delta}(\nabla u_h, h_K^{-1}(\pi
v-c)) \lesssim \int_K G_{p,\delta}(\nabla u_h, h_K^{-1}(v-\pi v)) +
\int_K G_{p,\delta}(\nabla u_h, h_K^{-1}(v-c)),$$ and the first term
has been considered in the first step of the proof. Lemma~\ref{gpoinc} with $q(u) = \int_K u$ also bounds the final term by
$\int_K G_{p,\delta}(\nabla u_h, \nabla v)$.
\end{proof}

\begin{lemma} \label{flemma}
For any $\varepsilon >0$, $u_h \in \Whp$, $v \in \Wp$ and $f \in
L^{p'}(\om)$, 
\begin{eqnarray*}
\int_\om f (v - \pi v) &\leq& C \varepsilon \int_\om G_{p,\delta}(\nabla u_h, \nabla v)\\
& & + C(\varepsilon) \sum_{z \in D} \sum_{K \subset \overline{\omega}_z}\int_{K} G_{p',1}(|\nabla u_h|^{p-1}, h_K(f-f_K))\\
& & + C \varepsilon \sum_{z\in D}\sum_{l \in \mathcal{E}_h^z}
\min_{\bar K' \cap l \neq \emptyset} \int_{\omega_z}
G_{p,\delta}(\nabla u_h|_{K'}, \left[\partial_{\nu_\mathcal{E}}
u_h\right]_l).
\end{eqnarray*}
Here, $f_K = \frac{1}{|K|} \int_{K} f$. If $f \in W^{1,p'}(\om)$,
the second term may be replaced by
$$C(\varepsilon) \sum_{z \in D} \sum_{K \subset \overline{\omega}_z}\int_{K} G_{p',1}(|\nabla
u_h|^{p-1}, h_K^2\nabla f).$$
\end{lemma}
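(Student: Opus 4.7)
The plan is to exploit the partition-of-unity decomposition $v-\pi v = \sum_{z \in D}(v-v_z)\varphi_z$ together with the orthogonality $\int_\om (v-v_z)\varphi_z = 0$, which is immediate from the definition $v_z = \int_\om \varphi_z v/\int_\om\varphi_z$. Writing
\[
\int_\om f(v-\pi v) = \sum_{z \in D}\int_{\omega_z} f (v-v_z)\varphi_z = \sum_{z \in D}\int_{\omega_z}(f-c_z)(v-v_z)\varphi_z,
\]
the constant $c_z$ on $\omega_z$ can be chosen freely; a weighted average such as $c_z = \int_{\omega_z}\varphi_z f / \int_{\omega_z}\varphi_z$ keeps $\|f-c_z\|_{L^{p'}(\omega_z)}$ comparable to $\sum_{K\subset\omega_z}\|f-f_K\|_{L^{p'}(K)}$ via a Poincar\'e-type estimate on the patch.

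Next, on each element $K\subset\omega_z$ I would apply the Young-type inequality of Remark~\ref{quasiremark1}c, generalized from $\delta=1$ to arbitrary $\delta\in[0,1]$ by the same convexity argument, pointwise with $a=|\nabla u_h|_K|$, $\lambda = h_K|f-c_z|$ and $\mu = h_K^{-1}|v-v_z|\varphi_z$:
\[
|f-c_z|\,|v-v_z|\varphi_z \lesssim C(\varepsilon)\,G_{p',1}(|\nabla u_h|^{p-1},h_K(f-c_z)) + \varepsilon\,G_{p,\delta}(\nabla u_h,h_K^{-1}(v-v_z)\varphi_z).
\]
Integrating over $K$ and summing over $K\subset\omega_z$ and $z\in D$ produces the two required groups of terms.

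For the group containing $h_K^{-1}(v-v_z)\varphi_z$, I would invoke verbatim the first half of the proof of Lemma~\ref{gradlemma}: Lemma~\ref{gpoinc} applied with the linear form $q(u)=\int_{\omega_z}\varphi_z u$ (which satisfies $\R\cap\ker q=\{0\}$ and for which $q(v-v_z)=0$) gives $\int_{\omega_z}G_{p,\delta}(\nabla u_h|_K, h_K^{-1}(v-v_z)\varphi_z)\lesssim \int_{\omega_z}G_{p,\delta}(\nabla u_h|_K,\nabla v)$, and Lemma~\ref{combilemma} then replaces $\nabla u_h|_K$ by $\nabla u_h$ at the price of the inner-edge jumps $[\partial_{\nu_\mathcal{E}}u_h]_l$ along $\mathcal{E}_h^z$. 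For the first group, a triangle inequality combined with the convexity of $G_{p',1}$ in its second argument and a local Poincar\'e estimate transfers $h_K(f-c_z)$ to $h_K(f-f_K)$; the patch-remainder $h_K(f_K-c_z)$ is piecewise constant, so a discrete Poincar\'e inequality on $\omega_z$ bounds its $G_{p',1}$ contribution by the already-present $h_{K'}(f-f_{K'})$ terms on neighbouring elements. The $W^{1,p'}$ refinement is then immediate from $\|f-f_K\|_{L^{p'}(K)} \lesssim h_K\|\nabla f\|_{L^{p'}(K)}$, which upgrades the scaling from $h_K(f-f_K)$ to $h_K^2\nabla f$.

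I expect the principal obstacle to be the bookkeeping involved in replacing the patch-averaged $c_z$ by the element-averaged $f_K$ while preserving the $G_{p',1}$ quasi-norm structure, since this quantity is nonlinear in its second argument and genuine cross terms arise. A secondary issue is verifying that the scalar Young inequality of Remark~\ref{quasiremark1}c extends uniformly in $\delta\in[0,1]$ with constants depending only on $\varrho_*,\varrho^*$, which should follow from splitting the $\omega$-weight into its two factors $(|x|+|y|)^\delta$ and $(1+|x|+|y|)^{1-\delta}$ and applying the usual Young inequality to each regime.
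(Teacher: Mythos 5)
Your proposal follows the paper's proof in all essentials: the partition--of--unity decomposition $v-\pi v=\sum_{z}(v-v_z)\varphi_z$, an orthogonality coming from the definition of $v_z$ to insert a constant in the $f$--factor, the Young--type inequality of Remark~\ref{quasiremark1}c to split off the data term, the first half of the proof of Lemma~\ref{gradlemma} (Lemma~\ref{gpoinc} with $q(u)=\int_{\omega_z}\varphi_z u$, then Lemma~\ref{combilemma} to trade the frozen gradient for edge jumps) for the $v$--part, and Lemma~\ref{gpoinc} with $q(u)=\int_K u$ for the $W^{1,p'}$ refinement. The one substantive divergence is exactly the step you flag as your ``principal obstacle'': you subtract the patch average $c_z$ and must then convert $h_K(f-c_z)$ into $h_K(f-f_K)$ inside the nonlinear $G_{p',1}$. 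The paper avoids this by inserting the element averages $f_K$ directly, invoking $\sum_{K\subset\overline{\omega}_z}\int_K f_K(v-v_z)\varphi_z=0$, so no conversion is needed; note, however, that this per--element orthogonality follows from $\int_{\omega_z}(v-v_z)\varphi_z=0$ only for a single constant on the whole patch, so the difficulty you identified is real in either route, and your sketched resolution would additionally have to control the change of the \emph{first} argument $|\nabla u_h|_K|\to|\nabla u_h|_{K'}|$ across the patch (a quasi--norm comparison, not a Poincar\'e estimate), which you have not supplied. Two smaller points: your concern about extending Remark~\ref{quasiremark1}c to general $\delta$ is unnecessary, since for $p\ge2$ one has $\omega(a,\mu)\ge a+\mu$ and hence $(a+\mu)^{p-2}\mu^2\le G_{p,\delta}(a,\mu)$, so the remark applies as stated; and the paper chooses $a=|\nabla u_h|_{\tilde K}|$ with $\tilde K$ maximizing $|\nabla u_h|$ over the patch, so that Lemma~\ref{gpoinc} can be applied on all of $\omega_z$ with one fixed first argument and the monotonicity from $p'\le2$ returns $|\nabla u_h|^{p-1}$ in the data term --- with your element--local choice $a=|\nabla u_h|_K|$ you should say explicitly that you enlarge $\int_K$ to $\int_{\omega_z}$ before invoking Lemma~\ref{gpoinc}.
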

\begin{proof}
We adapt the proof of \cite{cly}, Lemma 4.4. Let $\tilde{K} \subset
\overline{\omega}_z$ such that $|\nabla u_h|_{\tilde{K}}| =
\max_{K'\subset \overline{\omega}_z} |\nabla u_h|_{K'}|$. Applying
the inequality from Remark~\ref{quasiremark1}c) for some
$\varepsilon >0$ and $C(\varepsilon) = C_P \max\{\varepsilon^{-1},
\varepsilon^{1/(1-p)}\}$,
\begin{eqnarray*}
\int_\om f (v - \pi v) &=& \sum_{z \in D} \sum_{K \subset \overline{\omega}_z}\int_{K} h_K (f-f_K) h_K^{-1} (v - v_z) \varphi_z\\
&\leq& C_P^{-1}\ C(\varepsilon)\sum_{z \in D} \sum_{K \subset \overline{\omega}_z}\int_{K}(|\nabla u_h|_{\tilde{K}}|^{p-1} + h_K |f-f_K|)^{p'-2} h_K^2 |f-f_K|^2 \\
& &+ \varepsilon \sum_{z \in D}\sum_{K \subset \overline{\omega}_z}\int_{K}(|\nabla u_h|_{\tilde{K}}|+h_K^{-1} |v - v_z| \varphi_z)^{p-2}h_K^{-2} |v - v_z|^2 \varphi_z^2\\
&\leq & C_P^{-1}\ C(\varepsilon)\sum_{z \in D}\sum_{K \subset \overline{\omega}_z}\int_{K}G_{p',1}(|\nabla u_h|_{\tilde{K}}|^{p-1}, h_K (f-f_K)) \\
& &+ \varepsilon \sum_{z \in D}\sum_{K \subset
\overline{\omega}_z}\int_{K} G_{p,\delta}(\nabla u_h|_{\tilde{K}},
h_K^{-1} (v - v_z) \varphi_z),
\end{eqnarray*}
because $\sum_{K \subset \overline{\omega}_z}\int_{K} f_K (v - v_z)
\varphi_z=0$. However, by our choice of $\tilde{K}$ and because
$p'\leq 2$,
$$\int_{K}G_{p',1}(|\nabla u_h|_{\tilde{K}}|^{p-1}, h_K (f-f_K)) \leq \int_{K}G_{p',1}(|\nabla u_h|^{p-1}, h_K (f-f_K)).$$
If $f \in W^{1,p'}(\om)$, Lemma~\ref{gpoinc} with $q(u) = \int_{K}
u$ gives:
$$\int_{K}G_{p',1}(|\nabla u_h|^{p-1}, h_K (f-f_K))
\leq  C_P \int_{K}G_{p',1}(|\nabla u_h|^{p-1}, h_K^2 \nabla f).$$
Concerning the $G_{p,\delta}$--term, equation
(\ref{helpfulestimate1}) in the proof of Lemma~\ref{gradlemma} shows
that it is dominated by $\varepsilon \int_{\omega_z}
G_{p,\delta}(\nabla u_h|_{\tilde{K}}, \nabla v)$, which in turn was
bounded by
$$\varepsilon \int_{\omega_z}
G_{p,\delta}(\nabla u_h, \nabla v) +  \varepsilon \sum_{l \in
\mathcal{E}_h^z} \min_{\bar K' \cap l \neq \emptyset}
\int_{\omega_z} G_{p,\delta}(\nabla u_h|_{K'},
\left[\partial_{\nu_\mathcal{E}} u_h\right]_l).$$
\end{proof}

In order to define the a posteriori estimator, we still need to
introduce some notation. For any $z \in D$, denote by $K_{j,z} \in
\mathcal{T}_h$, $1\leq j \leq N_z$, the triangles neighboring $z$ in
the sense that $\overline{\omega}_z = \bigcup_{j=1}^{N_z} \bar
K_{j,z}$. To each $K_{j,z}$ we associate a weight factor
$\alpha_{j,z} \geq 0$ normalized to $\sum_{j=1}^{N_z} \alpha_{j,z} =
1$.

\begin{definition}
Given $u_h \in \Whp$, define the gradient recovery $$G_h u_h =
\sum_{z \in D} (G_h v_h)(z)\ \varphi_z, \quad (G_h v_h)(z) =
\sum_{j=1}^{N_z} \alpha_{j,z} \nabla u_h|_{K_{j,z}}.$$
\end{definition}

The following theorem states our reliable, but presumably not efficient a posteriori estimate.

\begin{theorem}
Let $f \in L^{p'}(\om)$ and denote by $(e, \tilde e, \epsilon)$ the
error between the Galerkin solution $(\hat u_h, \hat v_h, \hat
\phi_h) \in Y^p_h$ and the true solution $(\hat u , \hat v, \hat
\phi) \in Y^p$. If $\gs \neq \emptyset$, assume that $\nabla \hat
u|_\gs \in L^p(\gs)$. Then
\begin{eqnarray*}
\|e, \tilde e, \epsilon\|_{Y^p}^p &\lesssim&
|e|_{(1,\hat u, p)}^2 + \|e|_\pom + \tilde e\|_\Wg^2 + \|\epsilon\|_{W^{-\frac{1}{2},2}(\pom)}^2 \\
&\lesssim& \eta_{gr}^2 + \eta_f^2 +\eta_S^2+\eta_\partial^2+\eta_g^2 ,
\end{eqnarray*}
where
\begin{align*}
\eta_{gr}^2 &= \sum_{K \in \mathcal{T}_h} \int_{K}G_{p,\delta}(\nabla \hat u_h, \nabla\hat u_h - G_h \hat u_h),\\
\eta_f^2 &= \sum_{K \in \mathcal{T}_h} \int_{K} G_{p',1}(|\nabla \hat u_h|^{p-1}, h_K (f-f_K)),\\
\eta_S^2 &= \sum_{l \subset \pom} l\ \|\partial_s \{\mathcal{V}
\hat \phi_h + (1-\mathcal{K})(\hat u_h|_\pom+\hat
v_h-u_0)\}\|_{L^2(l)}^2\\
\eta_{\partial}^2&=\sum_{l \subset \pom} l\ \|-\varrho(\nabla \hat u_h)\ \partial_{\nu} \hat
u_h+t_0-\mathcal{W}(\hat u_h|_\pom+\hat v_h-u_0) - (\mathcal{K}'-1)
\hat \phi_h\|_{L^2(l)}^2\\
\eta_{g}^2 &=\sum_{l\subset\gs} l
   \|\varrho(\nabla \hat u_h)\ \partial_{\nu} \hat
   u_h|_\gs\|^2_{L^2(l)}+\|g\|^2_{W^{-\frac{1}{2},2}(\gs)}
\end{align*}
If $f \in W^{1,p'}(\om)$, we may replace $\eta_f^2$ by $\sum_{K \in
\mathcal{T}_h} \int_{K} G_{p',1}(|\nabla \hat u_h|^{p-1}, h_K^2 \nabla
f)$.
\end{theorem}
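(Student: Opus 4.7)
The plan is to start from the abstract a posteriori bound of Lemma~\ref{abstractapost} and estimate its five constituent contributions one by one against $\eta_{gr}^2,\eta_f^2,\eta_S^2,\eta_\partial^2,\eta_g^2$. The first inequality of the theorem, between $\|e,\tilde e,\epsilon\|_{Y^p}^p$ and the mixed $L^p$--$H^{1/2}$ quasinorm, follows exactly as at the end of the proof of Theorem~\ref{apriori}: Remark~\ref{quasiremark1}a) converts $|e|_{(1,\hat u,p)}^2$ into $|e|_{1,p}^p$, and the trace control contained in $\|e|_\pom+\tilde e\|_\Wg$ together with Proposition~\ref{poinfried} upgrades this seminorm bound to the full $\Wp$--norm. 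I therefore concentrate on the second, substantive inequality.

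In Lemma~\ref{abstractapost} I take $e_h=\pi e$ with the Cl\'ement--type operator from the definition preceding Lemma~\ref{gradlemma}, $\tilde e_h$ an analogous boundary Cl\'ement interpolant of $\tilde e$ in $\Whgs$, and $\nu_h$ the $L^2(\pom)$--projection of $\nu$ onto $\Whmg$. The interior contribution $\int_\om f(e-\pi e)-\langle G'\hat u_h,e-\pi e\rangle$ is the heart of the estimate. Since $\hat u_h$ is piecewise affine the flux $\varrho(|\nabla\hat u_h|)\nabla\hat u_h$ is piecewise constant, and elementwise integration by parts yields
\[
-\langle G'\hat u_h,e-\pi e\rangle=\sum_{l\in\mathcal{E}_h,\,l\not\subset\pom}\int_l\bigl[\varrho(|\nabla\hat u_h|)\partial_{\nu_\mathcal{E}}\hat u_h\bigr](e-\pi e)-\int_\pom\varrho(|\nabla\hat u_h|)\partial_\nu\hat u_h\,(e-\pi e).
\]
Because $G_h\hat u_h$ is globally continuous, $[\partial_{\nu_\mathcal{E}}\hat u_h]_l=[(\nabla\hat u_h-G_h\hat u_h)\cdot\nu_l]$ on every interior edge. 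Combining this identity with Lemma~\ref{gradlemma} and Young's inequality bounds the interior-edge sum by $C(\varepsilon)\eta_{gr}^2+\varepsilon\int_\om G_{p,\delta}(\nabla\hat u_h,\nabla e)$, while Lemma~\ref{flemma} dominates $\int_\om f(e-\pi e)$ by $C(\varepsilon)\eta_f^2$ plus absorbable quantities of the same type. Remark~\ref{quasiremark2}a) lets me absorb the $\varepsilon\int_\om G_{p,\delta}(\nabla\hat u_h,\nabla e)$ contributions into $|e|_{(1,\hat u,p)}^2$ once $\varepsilon$ is chosen small enough.

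The three remaining contributions are of boundary type. The Neumann trace produced by the integration by parts merges with $\langle t_0-\mathcal{W}(\hat u_h|_\pom+\hat v_h-u_0)-(\mathcal{K}'-1)\hat\phi_h,(e-\pi e)|_\pom+\tilde e-\tilde e_h\rangle$ to give, edgewise on $l\subset\gt$ (and the $\gt$-part of edges meeting $\gs$), the residual inside $\eta_\partial^2$; the local trace estimate $\|(e-\pi e)|_l\|_{L^2(l)}\lesssim\sqrt{l}\,\|e|_\pom+\tilde e\|_\Wg$ combined with Young's inequality then yields an absorbable bound by $\eta_\partial^2$. The BIE residual $-\langle\nu-\nu_h,w\rangle$ with $w=\mathcal{V}\hat\phi_h+(1-\mathcal{K})(\hat u_h|_\pom+\hat v_h-u_0)$ is treated in Faermann's local style: I subtract the piecewise-constant $L^2$--projection from $w$ on each boundary edge, apply $\|w-\bar w\|_{L^2(l)}\lesssim\sqrt{l}\,\|\partial_s w\|_{L^2(l)}$, and convert back to $\Wg$ by a scaling/interpolation argument, which produces $\eta_S^2$. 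Finally, on $\gs$ the corresponding piece of the Neumann trace together with the friction difference $\int_\gs g(|\tilde e_h+\hat v_h|-|\tilde e+\hat v_h|)$ gives $\eta_g^2$: its first summand arises from edgewise Cauchy--Schwarz applied to $\varrho(|\nabla\hat u_h|)\partial_\nu\hat u_h$, which belongs to $L^2(\gs)$ thanks to the hypothesis $\nabla\hat u|_\gs\in L^p(\gs)$ and the growth of $\varrho$, while the $\|g\|^2_{W^{-1/2,2}(\gs)}$--summand comes from the friction difference via the duality pairing on $\Wgs\hookrightarrow W^{1/2,2}(\gs)$.

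The main obstacle is the by-now familiar $p$--versus--$2$ bookkeeping characteristic of degenerate $p$--Laplacian analysis: every application of Young's inequality introduces an $\varepsilon$ that must simultaneously be small enough to be absorbed against $|e|_{(1,\hat u,p)}^2$ \emph{and} against the positive-definiteness of $S$ on $\Wg$, exactly as in the chain of estimates proving Lemma~\ref{monotony}. A subordinate technical point is keeping the reduction of the edge-jump terms to $\eta_{gr}^2$ free of constants cascading with the weights $\alpha_{j,z}$; this is precisely what the combinatorial Lemma~\ref{combilemma} is designed for, and its careful application together with the identification $[\partial_{\nu_\mathcal{E}}\hat u_h]_l=[(\nabla\hat u_h-G_h\hat u_h)\cdot\nu_l]$ is the technical core of the argument.
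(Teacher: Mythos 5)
Your proposal is correct and follows essentially the same route as the paper: starting from Lemma~\ref{abstractapost}, treating the volume term with Lemma~\ref{flemma}, converting the interior edge jumps to $\eta_{gr}^2$ via the continuity of $G_h\hat u_h$ together with Lemmas~\ref{gradlemma} and~\ref{combilemma}, and handling the boundary residuals, the BIE term and the friction term by Cauchy--Schwarz/Young and duality exactly as in the paper (which bounds $\|\mathcal{V}\hat\phi_h+(1-\mathcal{K})(\hat u_h|_\pom+\hat v_h-u_0)\|_{\Wg}$ by $\eta_S$ citing Carstensen--Stephan, matching your Faermann-style local argument). The only cosmetic deviation is your choice of $\tilde e_h$ as a boundary interpolant where the paper simply takes $\tilde e_h=0$; this does not change the estimate.
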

\begin{proof}
From Lemma~\ref{abstractapost} we know that for all $(e_h, \tilde
e_h, \nu_h) \in Y^p_h$
\begin{align*}
&\|e, \tilde e, \epsilon\|_{Y^p}^p \lesssim  |e|_{(1,\hat u, p)}^2 + \|e|_\pom + \tilde e\|_\Wg^2 + \|\epsilon\|_{W^{-\frac{1}{2},2}(\pom)}^2 \\
&\lesssim  \int_\om f (e - e_h)- \sum_{K \in \mathcal{T}_h}
\int_{\partial K} \varrho(\nabla \hat u_h)\  \partial_\nu \hat u_h|_{\partial
K}\ (e-e_h)\\
&\hphantom{\lesssim} +\int_\gs g (|\tilde e_h +\hat v_h| - |\hat v|)-\langle
\nu-\nu_h, \mathcal{V} \hat
\phi_h + (1-\mathcal{K})(\hat u_h|_\pom+\hat v_h-u_0)\rangle\\
&\hphantom{\lesssim}+\langle t_0-\mathcal{W}(\hat u_h|_\pom+\hat v_h-u_0) -
(\mathcal{K}'-1) \hat \phi_h, (e -e_h)|_\pom +\tilde e - \tilde
e_h\rangle,
\end{align*}
with $2\nu=\epsilon+\mathcal{V}^{-1}(1-\mathcal{K})(e|_\pom+\tilde e)$.
The first two terms are mainly going to give the gradient recovery
in the interior, the fourth term the error $\eta_S$ of constructing
the Steklov-Poincar\'e operator, while the remaining terms add up to
$\eta_\partial$.

Concerning the first term:
\begin{eqnarray*}
\int_\om f(e - e_h) &\lesssim& \varepsilon \sum_{K \in \mathcal{T}_h} \int_{K} G_{p,\delta}(\nabla \hat u_h, \nabla e)\\
& & + C(\varepsilon)\sum_{K \in \mathcal{T}_h} \int_{K} G_{p',1}(|\nabla \hat u_h|^{p-1}, h_z (f-f_z))\\
& & + \varepsilon \sum_{z\in D}\sum_{l \in \mathcal{E}_h^z}
\min_{\bar K' \cap l \neq \emptyset} \int_{\omega_z}
G_{p,\delta}(\nabla \hat u_h|_{K'}, \left[\partial_{\nu_\mathcal{E}}
u_h\right]_l)\\
&\lesssim& \varepsilon |e|_{(1,\hat u, p)}^2+C(\varepsilon)\
\eta_f^2 + \varepsilon \sum_{z\in D}\sum_{l \in \mathcal{E}_h^z}
\min_{\bar K' \cap l \neq \emptyset} \int_{\omega_z}
G_{p,\delta}(\nabla \hat u_h|_{K'}, \left[\partial_{\nu_\mathcal{E}}
u_h\right]_l).
\end{eqnarray*}
$G_h \hat u_h$ is continuous across any interior edge $l$, so that
$[\partial_{\nu}\hat u_h]_l = [\partial_{\nu}\hat u_h - G_h \hat
u_h]_l$ and
$$\min_{\bar K' \cap l \ne \emptyset} \int_{\omega_z} G_{p,\delta}(\nabla \hat u_h|_{K'}, [\partial_{\nu}\hat u_h - G_hu_h]_l) \lesssim \int_{\omega_z} G_{p,\delta}(\nabla \hat u_h, \nabla \hat u_h - G_h \hat u_h).$$
Therefore,
\begin{eqnarray*}
\int_\om f(e - e_h) &\lesssim& \varepsilon |e|_{(1,\hat u,
p)}^2+C(\varepsilon) \eta_f^2 + \varepsilon \sum_{z\in D}\sum_{l \in
\mathcal{E}_h^z} \int_{\omega_z} G_{p,\delta}(\nabla \hat u_h,
[\partial_{\nu}\hat u_h- G_h \hat
u_h]_l)\\
&\lesssim& \varepsilon |e|_{(1,\hat u, p)}^2+C(\varepsilon) \eta_f^2 +\varepsilon \sum_{K \in \mathcal{T}_h} \int_{K} G_{p,\delta}(\nabla \hat u_h, \nabla \hat u_h - G_h \hat u_h)\\
&=&\varepsilon |e|_{(1,\hat u, p)}^2+C(\varepsilon) \eta_f^2 +\varepsilon \eta_{gr}^2.
\end{eqnarray*}
Concerning the second term, let $$A_l = \varrho(\nabla \hat
u_h|_{K_{l,1}})\ \partial_{\nu} \hat u_h|_{K_{l,1}} - \varrho(\nabla
\hat u_h|_{K_{l,2}})\ \partial_{\nu} \hat u_h|_{K_{l,2}},$$ where
again $l \subset \bar K_{l,1} \cap \bar K_{l,2}$, and the unit normal $\nu$
points outward of $K_{l,1}$. Therefore
\begin{eqnarray*}
- \langle G'\hat u_h, e - \pi e\rangle &=& - \sum_{K \in \mathcal{T}_h} \int_{\partial K} \varrho(\nabla \hat u_h)\  \partial_\nu \hat u_h|_{\partial K}\ (e-\pi e) \\
& = & - \sum_{l \not\subset \pom } \int_l A_l (e-\pi e) -  \sum_{l \subset \pom} \int_l  \varrho(\nabla \hat u_h)\ \partial_{\nu} \hat u_h|_l\ (e-\pi e).
\end{eqnarray*}
Repeating the analysis of \cite{cly}, Theorem 5.1, with the help of Lemma~\ref{gradlemma} gives $$- \sum_{l \not\subset \pom } \int_l A_l (e-\pi e) \lesssim \eta_{gr}^2 + \varepsilon (|e|_{(1, \hat u_h, p)}^2 + \eta_{gr}^2).$$
Thus
\begin{eqnarray*}
\|e, \tilde e, \epsilon\|_{Y^p}^p &\lesssim & |e|_{(1,\hat u, p)}^2 + \|e|_\pom + \tilde e\|_\Wg^2 + \|\epsilon\|_{W^{-\frac{1}{2},2}(\pom)}^2 \\
&\lesssim& \eta_f^2+\varepsilon (\eta_{gr}^2 + |e|_{(1,\hat u, p)}^2)+ \eta_{gr}^2
+ \varepsilon (|e|_{(1, \hat u_h, p)}^2 + \eta_{gr}^2)\\
& & 
 +\int_\gs \left\{-\varrho(\nabla \hat u_h)\ \partial_{\nu} \hat
u_h|_\gs (\tilde e_h - \tilde e)+g (|\tilde e_h +\hat v_h| - |\tilde
e +\hat v_h|)\right\} \\
& & - \int_\pom \varrho(\nabla \hat u_h)\
\partial_{\nu} \hat
u_h|_\pom\ ((e-\pi e)|_\pom +\tilde e - \tilde e_h)\\
&&+\langle t_0-\mathcal{W}(\hat u_h|_\pom+\hat v_h-u_0) -
(\mathcal{K}'-1) \hat \phi_h,
(e -\pi e)|_\pom +\tilde e - \tilde e_h\rangle \\
& & -\langle \nu-\nu_h, \mathcal{V} \hat \phi_h +
(1-\mathcal{K})(\hat u_h|_\pom+\hat v_h-u_0)\rangle.
\end{eqnarray*}
We bound the second, third + fourth as well as the final line
individually. Cauchy-Schwarz and Young's inequality allow to
estimate the last term by
$$\varepsilon \|e|_\pom + \tilde e\|_\Wg^2 + \varepsilon \|\epsilon\|_{W^{-\frac{1}{2},2}(\pom)}^2+\varepsilon^{-1}\ \|\mathcal{V} \hat
\phi_h + (1-\mathcal{K})(\hat u_h|_\pom+\hat v_h-u_0)\|_{\Wg}^2,$$
and the latter by $\eta_S^2$ (cf.~\cite{cast}). The third and fourth
lines are estimated by (cf.~\cite{cast})
$$\|-\varrho(\nabla \hat u_h)\ \partial_{\nu} \hat
u_h+t_0-\mathcal{W}(\hat u_h|_\pom+\hat v_h-u_0) - (\mathcal{K}'-1)
\hat \phi_h\|_{W^{-\frac{1}{2},2}(\pom)} \|(e-\pi e)|_\pom +\tilde e
\|_{\Wg}$$ which lead to $\eta_{\partial}$, where we have
choosen $\tilde e_h=0$, i.e. $v_h=\hat v_h$.
Finally,
using the triangle inequality,
the second line is simplified as follows:
\begin{align*}
&\int_\gs \{-\varrho(\nabla \hat u_h)\ \partial_{\nu} \hat u_h|_\gs
(\tilde e_h - \tilde e)+g (|\tilde e_h +\hat v_h| - |\tilde
e +\hat v_h|)\}\\
&\leq\int_\gs \{-\varrho(\nabla \hat u_h)\ \partial_{\nu} \hat u_h|_\gs
(\tilde e_h - \tilde e)+g|\tilde e_h -\tilde e |\}\\
&=\int_\gs \{\varrho(\nabla \hat u_h)\ \partial_{\nu} \hat u_h|_\gs
\tilde e +g|\tilde e |\}\\
&\leq \|\varrho(\nabla \hat u_h)\ \partial_{\nu} \hat
u_h|_\gs\|_{W^{-\frac{1}{2},2}(\gs)}\|\tilde
e\|_{W^{\frac{1}{2},2}(\gs)}
+\|g\|_{W^{-\frac{1}{2},2}(\gs)}\|\tilde e\|_{W^{\frac{1}{2},2}(\gs)}
.
\end{align*}
We may use the Cauchy-Schwartz inequality and the inverse inequality,
leading to $\eta_g$.

\end{proof}

\section{Numerical results}\label{sec:num}
With the subset $\Lambda_h$ of $\Whgs$ given by
\[
 \Lambda_h=\{\sigma_h\in \Whgs\,:\,|\sigma_h(x)|\leq 1\mbox{ a.e. on }
\Gamma_s\},
\]
we can define an Uzawa algorithm for solving the variational
inequality analogously to \cite{mast}. In order to introduce this
algorithm, let $P_\Lambda$ be the projection of $\Whgs$ onto
$\Lambda_h$, i.e. for every nodal point of the mesh
$\cT_h|_{\Gamma_s}$ holds
 $\delta\mapsto P_\Lambda(\delta)=\sup\{-1,\inf(1,\delta)\}$.
\begin{algorithm}[Uzawa]\quad\\
\begin{enumerate}
\item Choose $\sigma^0_h\in\Lambda_h$.
\item For $n=0,1,2,\ldots$
find $(u^n_h,v^n_h)\in X^p_h$ such that
\[\label{equ:uzca}
  \langle G' u^n_h, u_h\rangle + \langle S_h(u^n_h|_\pom+v^n_h), u_h|_\pom + v_h\rangle
+\int_{\Gamma_s} g\sigma^n_hv_h\,ds
  =\lambda_h(u_h,v_h)
\]
for all $(u_h,v_h)\in X^p_h$.
\item Set
\[\label{equ:uzs}
 \sigma^{n+1}_h=P_\Lambda(\sigma^n_h+\rho g v^n_h),
\]
where $\rho>0$ is a sufficiently small parameter that will be specified
later.
\item Repeat with 2. until a convergence criterion is satisfied.
\end{enumerate}
\end{algorithm}

In our first example the model problem is defined on the
L-shape with
$\Omega=[-\frac14,\frac14]^2\backslash[0,\frac14]^2$,
$\Omega^c=\R^2\backslash\Omega$.
The friction part of the
interface is $\Gamma_s=\overline{(-\frac14,-\frac14)(\frac14,-\frac14)}
           \cup\overline{(-\frac14,-\frac14)(-\frac14,\frac14)}$,
see Figure~\ref{fig:geo}.

In this example we choose $\varrho(t)=(\varepsilon+t)^{p-2}$, with
$p=3$ and $\varepsilon=0.00001$. Our volume and boundary data are
given by $f=0$ and $u_0=r^{2/3}\sin\frac23(\varphi-\frac\pi2)$,
$t_0=\partial_\nu u_0|_\pom$. The friction parameter is $g=0.5$,
leading to slip conditions on the interface. We have applied the
Uzawa algorithm as introduced above with the damping parameter
$\rho=25$ to solve the variational inequality. The nonlinear
variational problem in the Uzawa algorithm is then solved by
Newton's method in every Uzawa-iteration step.

In Table~\ref{tab:uni} we give the degrees of freedom, the value
$J_h(\hat u_h,\hat v_h)$ and the error measured with the help of
$J$, i.e. $\delta J=J_h(\hat u_h,\hat v_h)-J(\hat u,\hat v)$, where
we have obtained the value $J(\hat u,\hat v)$ by extrapolation of
$J_h(\hat u_h,\hat v_h)$. Due to the slip condition, we need only a
few Uzawa steps. But as a consequence of the degeneration of the
system matrix, due to the nonlinearity, the iteration numbers for
the MINRES solver, applied to the linearized system, are very high,
leading to large computation times. The convergence rate $\alpha_J$
is suboptimal, due to the presence of singularities, in the boundary
data as well, as due to the change of boundary conditions.

\begin{figure}[htb]
\begin{center}
\setlength{\unitlength}{2cm}
\begin{picture}(2.4,2.4)(-1.2,-1.2)
\put(-1,-1){\line(1,0){2}}
\put(-1,-1){\line(0,1){2}}
\put( 1,-1){\line(0,1){1}}
\put(-1, 1){\line(1,0){1}}
\put( 0, 0){\line(1,0){1}}
\put( 0, 0){\line(0,1){1}}
\put(-0.3,-0.3){\makebox{$\Omega$}}
\put(0.3,0.3){\makebox{$\Omega^c$}}
\put(1.1,-0.5){\makebox{$\Gamma_t$}}
\put(-1.15,0.0){\makebox{$\Gamma_s$}}
\put(-1.1,-1.1){\line(1,0){2.1}}
\put(-1.1,-1.1){\line(0,1){1.0}}
\put(-1.1, 0.2){\line(0,1){0.8}}
\put(-1.15,1.0){\line(1,0){0.1}}
\put(1.0,-1.15){\line(0,1){0.1}}
\end{picture}
\end{center}
\caption{\label{fig:geo}Geometry and interface of the model problem}
\end{figure}
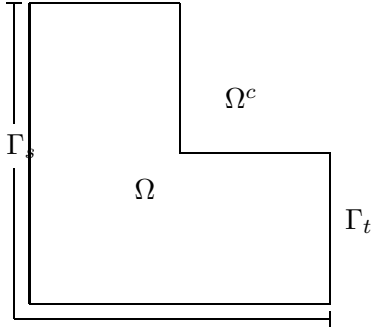

\begin{table}[htb]
\begin{center}
\begin{tabular}{rlllcl}
DOF & $J_h(\hat u_h,\hat v_h)$ & $\delta J$ & $\alpha_{J}$ & $It_{\rm Uzawa}$ & $\tau(s)$ \\ \hline
28&-0.511609&0.017249&---&2&0.190 \\
80&-0.517938&0.010920&-0.435&2&0.640 \\
256&-0.521857&0.007001&-0.382&2&2.440 \\
896&-0.524293&0.004566&-0.341&2&11.05 \\
3328&-0.525841&0.003017&-0.316&2&61.85 \\
12800&-0.526865&0.001993&-0.308&2&437.5 \\
50176&-0.527571&0.001287&-0.320&2&4218. \\
\end{tabular}
\end{center}
\caption{\label{tab:uni} Convergence rates and Uzawa steps for uniform
meshes (Example 1)}
\end{table}

In our second example we use the same model geometry as before (see
Fig.~\ref{fig:geo}). Here we choose the friction boundary
$\Gamma_s=\emptyset$.
Therefore
our model problem reduces to a non-linear
p-Laplacian FEM-BEM coupling problem, where we can prescribe the solution.

In this example we choose $\varrho(t)=(\varepsilon+t)^{p-2}$, with
$p=3$ and $\varepsilon=0.00001$. We prescribe the solution by
$u_1=r^{2/3}\sin\frac23(\varphi-\frac\pi2)$ and $u_2=0$.
Then the boundary data $u_0,t_0$ and volume data $f$ are given by
$u_0=u_1|_\Gamma$, $t_0=\varrho(|\nabla u_1|)\partial_\nu u_1$ and
$f=-\Div(\varrho(|\nabla u_1|)\nabla u_1)$.

In the following we give errors in the $\|\cdot\|_{W^{1,p}(\Omega)}$
norm and in the quasinorm $|u-u_h|_Q=\|u-u_h\|_{(1,u_h,p)}$.

In Tab.~\ref{tab:h4} we give the errors, convergence rates, number
of Newton iterations $It_{Newton}$ and the computing time for the
uniform h-version with rectangles. We observe that the convergence
rate in the quasi-norm $|\cdot|_{Q}$ is better than in the
$\|\cdot\|_{W^{1,3}(\Omega)}$-norm. The number of Newton iterations
appears to be bounded.

In Tab.~\ref{tab:h3} for the uniform h-version with triangles, we
give the errors, convergence rates, error estimator $\eta$,
efficiency indices $\delta_u/\eta$ for the
$\|\cdot\|_{W^{1,3}(\Omega)}$-norm and $\delta_q/\eta$ for the
$|\cdot|_{Q}$-norm, number of Newton iterations and the computing
time. Again, here we observe that the convergence rate in the
quasi-norm $|\cdot|_{Q}$ is better than in the
$\|\cdot\|_{W^{1,3}(\Omega)}$-norm and the number of Newton
iterations is bounded. The efficiency index $\delta_u/\eta$ appears
to be constant, whereas the efficiency index $\delta_q/\eta$ appears
to be decreasing.

Tab.~\ref{tab:adap} gives the corresponding numbers for the adaptive
version, using a blue-green refining strategy for triangles and
refining the 10\% elements with the largest indicators. Here we
observe that the convergence rates for both norms are very similar
and that both efficiency indices are bounded.

Figure~\ref{fig:L2} give the errors for all methods in the
$\|\cdot\|_{W^{1,3}(\Omega)}$-norm and the $|\cdot|_{Q}$ quasi-norm
together with the error indicators for the uniform and adaptive
methods.

Figure~\ref{fig:meshes} presents the sequence of meshes generated by
the adaptive refinement strategy. We clearly observe the refinement
towards the reentrant corner with the singularity of the solution.

\begin{table}
\begin{center}
\begin{tabular}{rl@{}rlrcr}
DOF & $\|u-u_h\|_{1,3}$ & $\alpha$
    & $|u-u_h|_{Q}$ & $\alpha$
& $It_{Newton}$ & $\tau(s)$ \\ \hline
21&0.1711499&---&0.1293512&---&22&0.224 \\
65&0.1308635&-0.238&0.0860870&-0.360&22&0.424 \\
225&0.1039326&-0.186&0.0612225&-0.274&23&1.668 \\
833&0.0826578&-0.175&0.0438478&-0.255&23&6.804 \\
3201&0.0657091&-0.170&0.0314280&-0.247&23&27.28 \\
12545&0.0522196&-0.168&0.0225589&-0.243&24&120.8 \\
49665&0.0414910&-0.167&0.0162319&-0.239&24&560.1 \\
197633&0.0329617&-0.167&0.0117169&-0.236&24&2678.
\end{tabular}
\end{center}
\caption{\label{tab:h4}Errors, convergence rates (Example 2, uniform
  mesh with rectangles)}
\end{table}
\begin{table}
\begin{center}
\begin{tabular}{rl@{}rl@{\quad }rrcc@{}c@{}r}
DOF & $\|u-u_h\|_{1,3}$ & $\alpha$
    & $|u-u_h|_{Q}$ & $\alpha$
    & $\eta$ & $\delta_u/\eta$ & $\delta_q/\eta$ &
$It_{New}$ & $\tau(s)$ \\ \hline
21&0.1945908&---&0.1510064&---&1.027&0.190&0.147&22&0.620 \\
65&0.1535874&-0.209&0.1081632&-0.295&0.690&0.223&0.157&22&2.212 \\
225&0.1219287&-0.186&0.0774765&-0.269&0.516&0.236&0.150&22&8.617 \\
833&0.0969249&-0.175&0.0555005&-0.255&0.394&0.246&0.141&23&36.00 \\
3201&0.0770270&-0.171&0.0396882&-0.249&0.304&0.253&0.131&23&144.2 \\
12545&0.0611994&-0.168&0.0283778&-0.246&0.236&0.260&0.120&24&608.7 \\
49665&0.0486160&-0.167&0.0203130&-0.243&0.184&0.265&0.111&24&2530. \\
197633&0.0386151&-0.167&0.0145686&-0.241&0.144&0.269&0.102&24&11000
\end{tabular}
\end{center}
\caption{\label{tab:h3}Errors, onvergence rates, estimator
  $\eta$, reliability $\delta_u/\eta$ and $\delta_q/\eta$ (Example 2,
  uniform mesh with triangles)}
\end{table}

\begin{table}
\begin{center}
\begin{tabular}{rl@{}rl@{\quad }rrcc@{}c@{}r}
DOF & $\|u-u_h\|_{1,3}$ & $\alpha$
    & $|u-u_h|_{Q}$ & $\alpha$
    & $\eta$ & $\delta_u/\eta$ & $\delta_q/\eta$ &
$It_{New}$ & $\tau(s)$ \\ \hline
21&0.1945908&---&0.1510064&---&1.027&0.190&0.147&22&0.196 \\
32&0.1602214&-0.461&0.1205155&-0.535&0.804&0.199&0.150&22&0.332 \\
54&0.1275298&-0.436&0.0918131&-0.520&0.603&0.212&0.152&22&0.648 \\
93&0.1019990&-0.411&0.0699054&-0.501&0.442&0.231&0.158&22&1.132 \\
152&0.0821754&-0.440&0.0540462&-0.524&0.325&0.253&0.166&23&2.000 \\
249&0.0679251&-0.386&0.0449420&-0.374&0.246&0.276&0.183&23&3.352 \\
400&0.0558447&-0.413&0.0369614&-0.412&0.190&0.294&0.194&23&5.700 \\
625&0.0439784&-0.535&0.0277857&-0.639&0.148&0.297&0.188&24&9.896 \\
986&0.0352491&-0.485&0.0217361&-0.539&0.116&0.305&0.188&24&17.45 \\
1528&0.0279287&-0.531&0.0167409&-0.596&0.091&0.308&0.184&25&31.16 \\
2322&0.0222760&-0.540&0.0129489&-0.614&0.071&0.312&0.181&25&53.98 \\
3620&0.0177640&-0.510&0.0102552&-0.525&0.056&0.316&0.182&25&106.7 \\
5544&0.0142059&-0.524&0.0080233&-0.576&0.044&0.320&0.181&25&205.3 \\
8449&0.0112965&-0.544&0.0063426&-0.558&0.035&0.322&0.181&26&422.4 \\
12810&0.0090396&-0.536&0.0050706&-0.538&0.028&0.325&0.183&26&1060. \\
19222&0.0072288&-0.551&0.0040370&-0.562&0.022&0.329&0.184&26&2400. \\
29006&0.0057984&-0.536&0.0032478&-0.529&0.018&0.333&0.186&27&5460. \\
43593&0.0046615&-0.536&0.0026230&-0.524&0.014&0.337&0.190&27&13000
\end{tabular}
\end{center}
\caption{\label{tab:adap}p-Laplacian (adaptive), convergence rates,
estimator
  $\eta$, reliability $\delta_u/\eta$ and $\delta_q/\eta$ }
\end{table}

\begin{figure}[htb]
\centerline{\resizebox{7cm}{!}{\includegraphics*{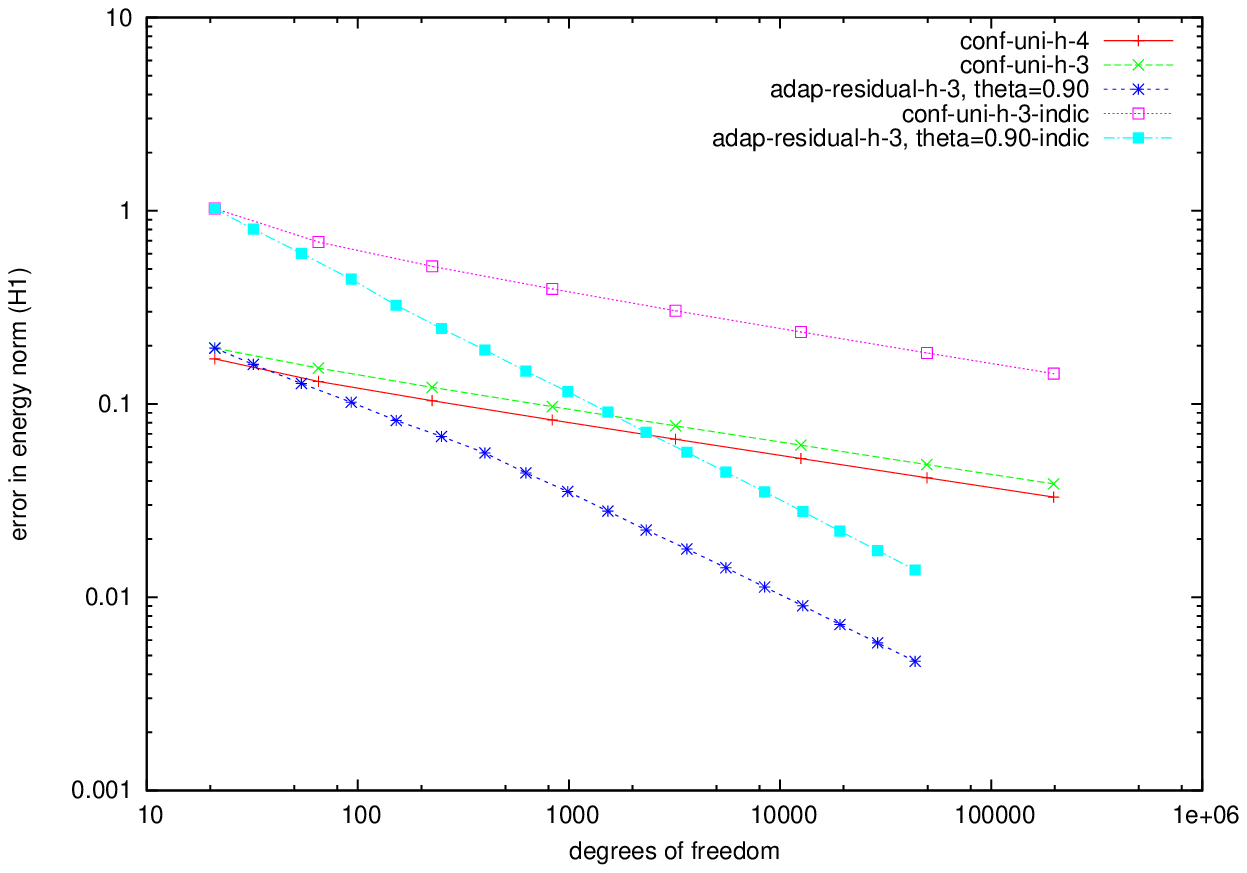}}\hfill
            \resizebox{7cm}{!}{\includegraphics*{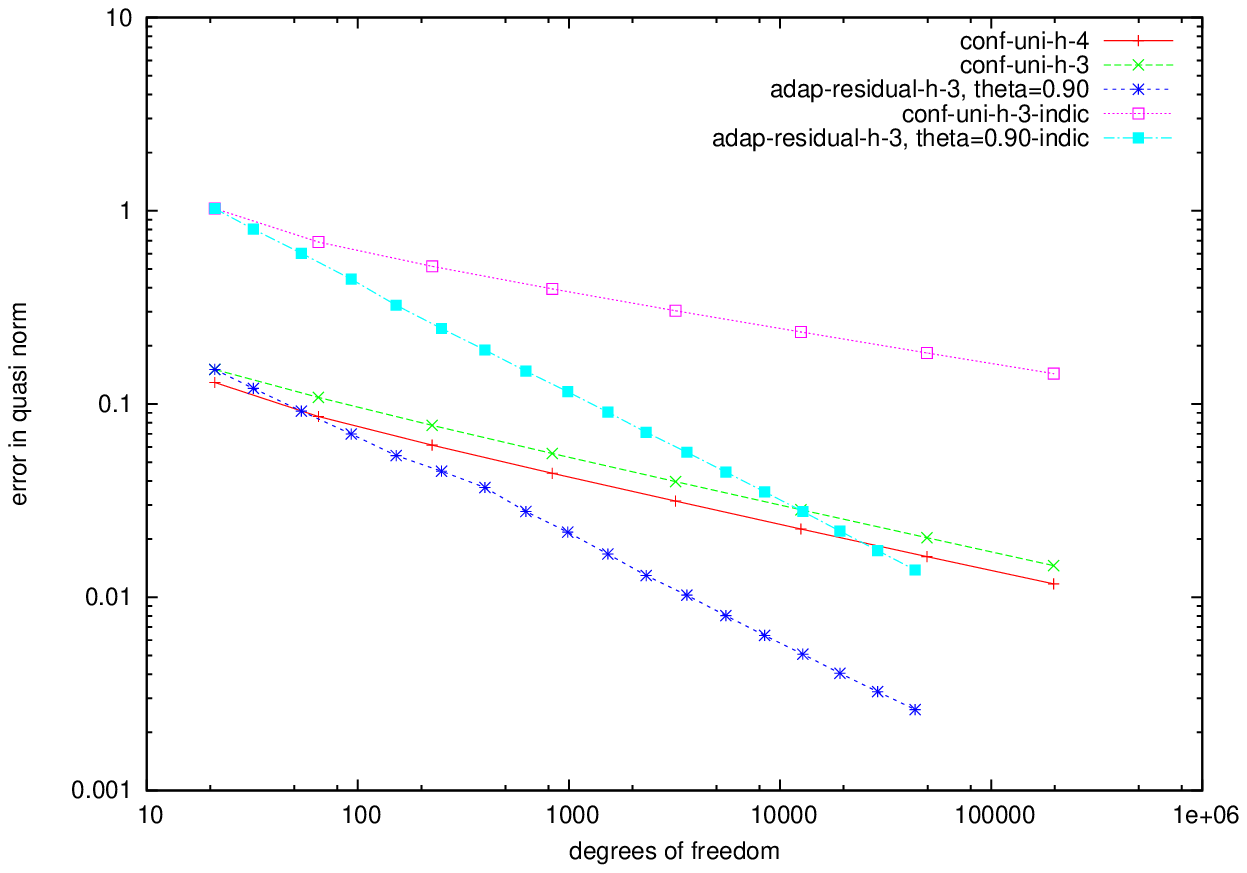}}}
\caption{\label{fig:L2} $\|u-u_n\|_{W^{1,3}(\Omega)}$ (left)
  and $|u-u_n|_{Q}$ (right).}
\end{figure}

\begin{figure}
\resizebox{3.3cm}{!}{\includegraphics*{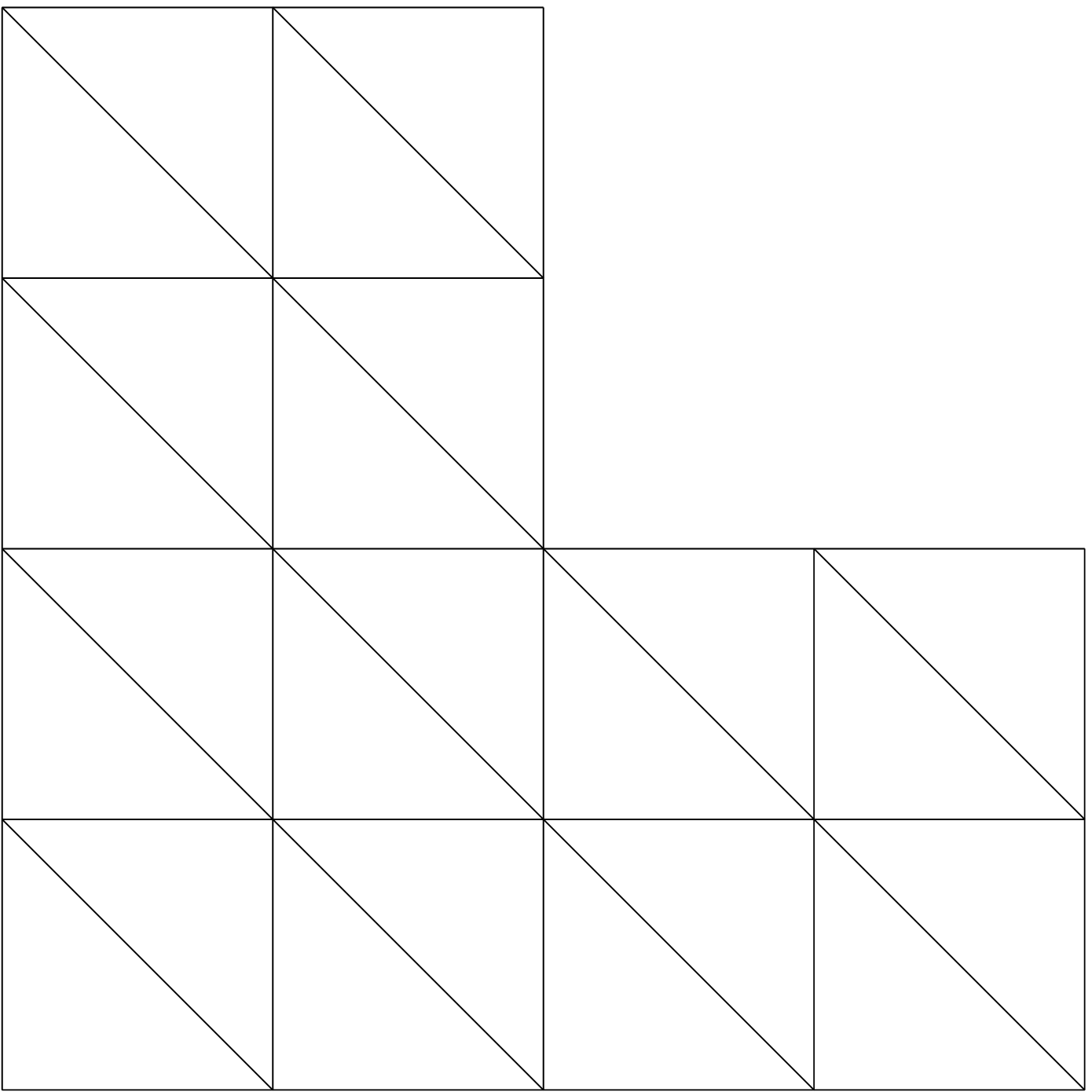}}\qquad
\resizebox{3.3cm}{!}{\includegraphics*{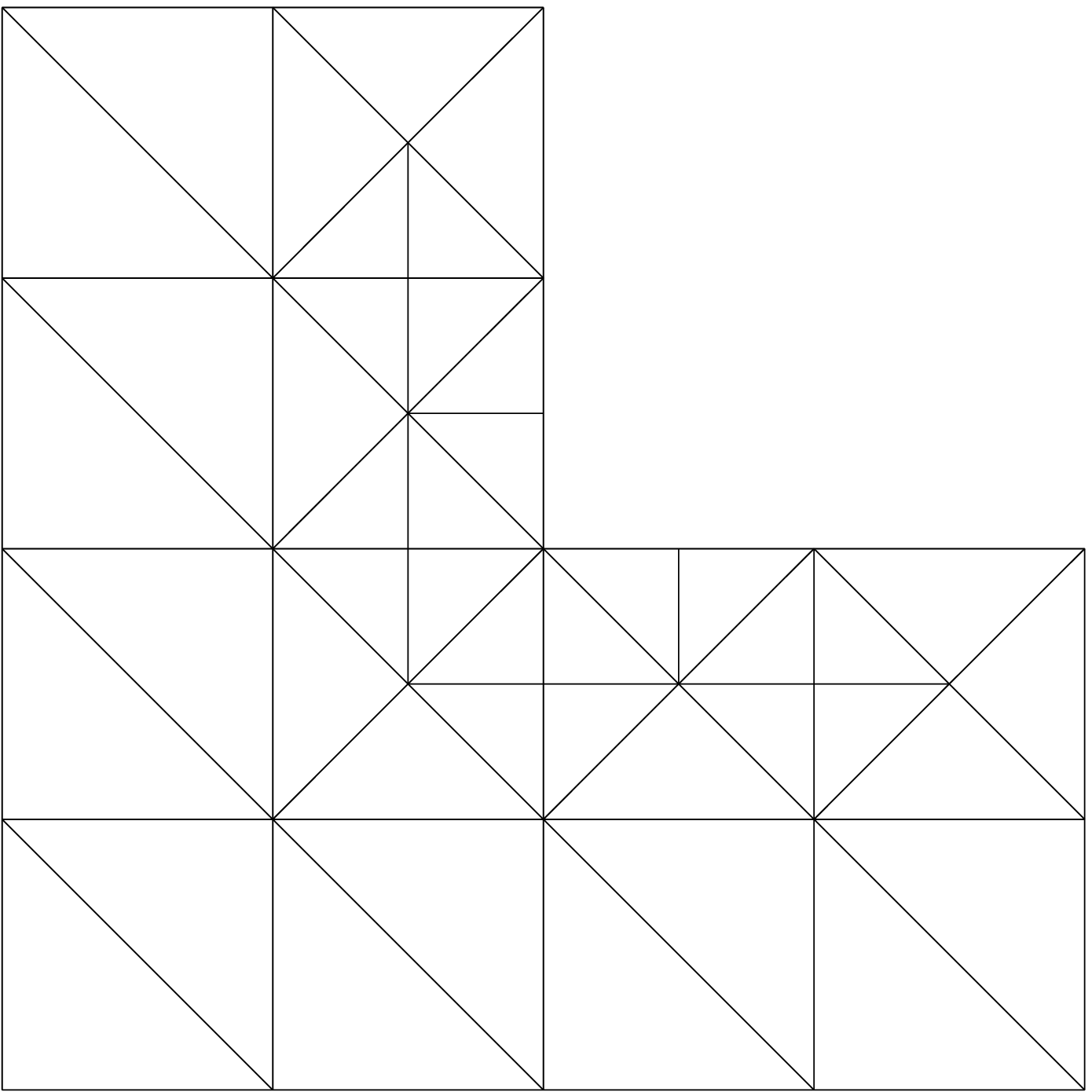}}\qquad
\resizebox{3.3cm}{!}{\includegraphics*{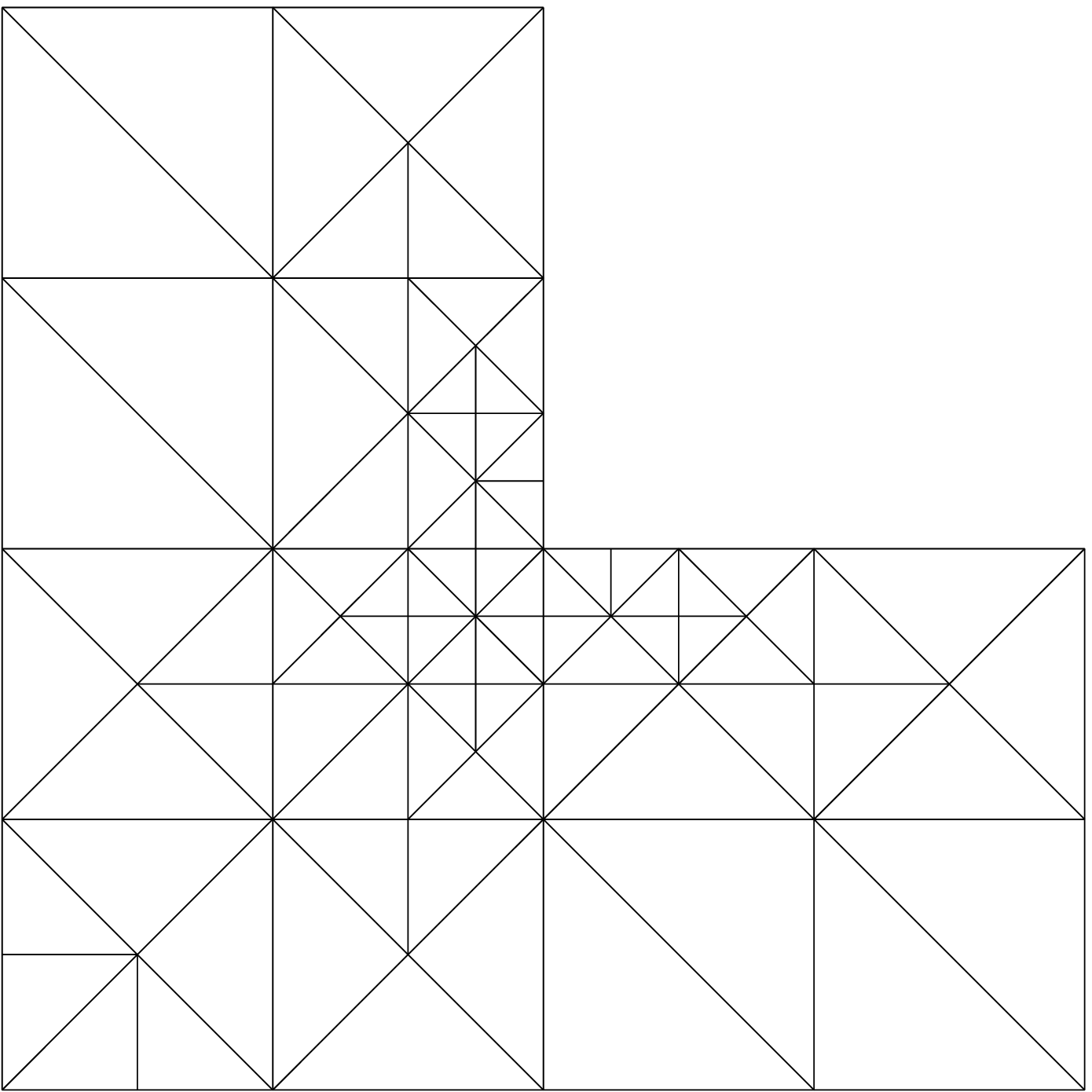}}\\

\resizebox{3.3cm}{!}{\includegraphics*{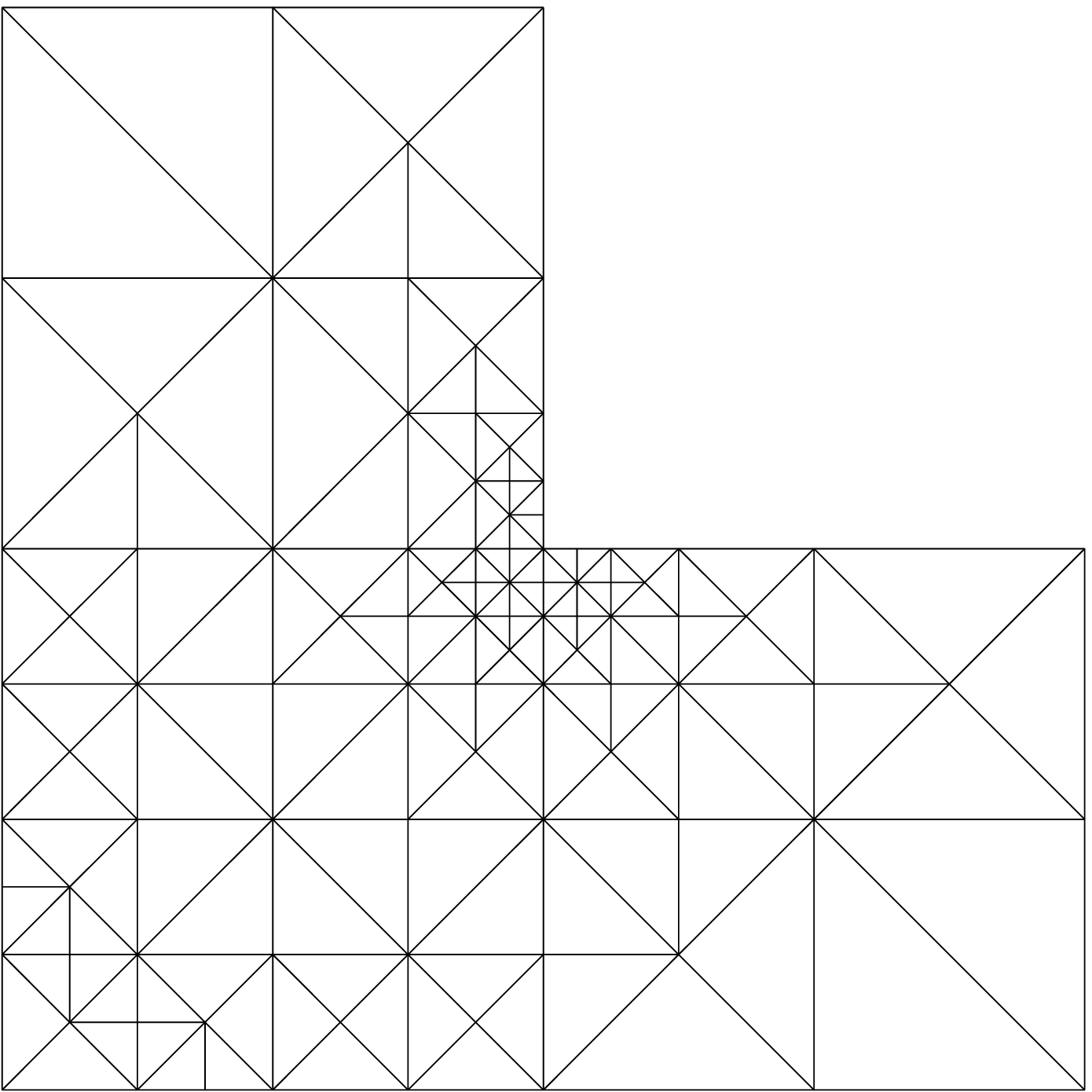}}\qquad
\resizebox{3.3cm}{!}{\includegraphics*{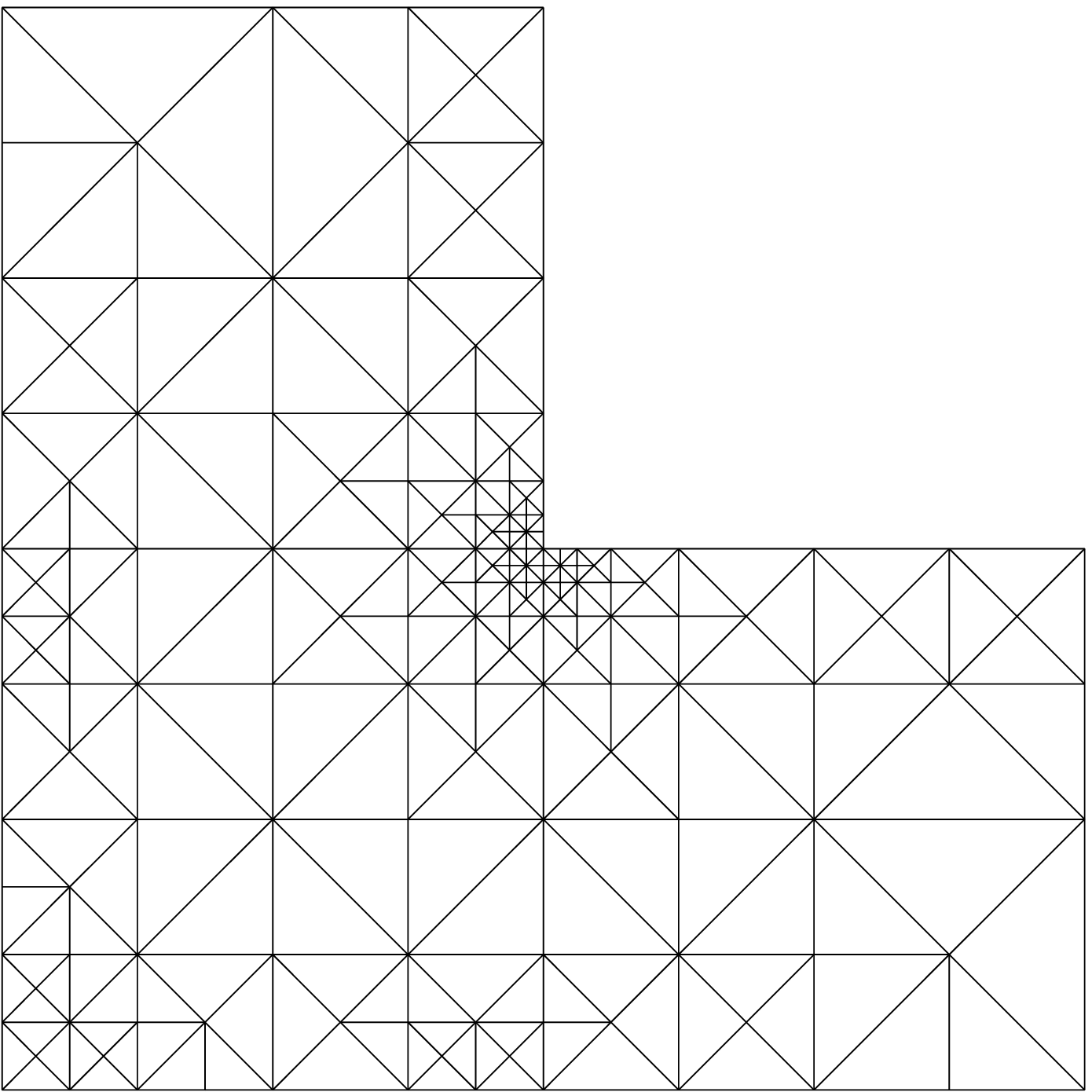}}\qquad
\resizebox{3.3cm}{!}{\includegraphics*{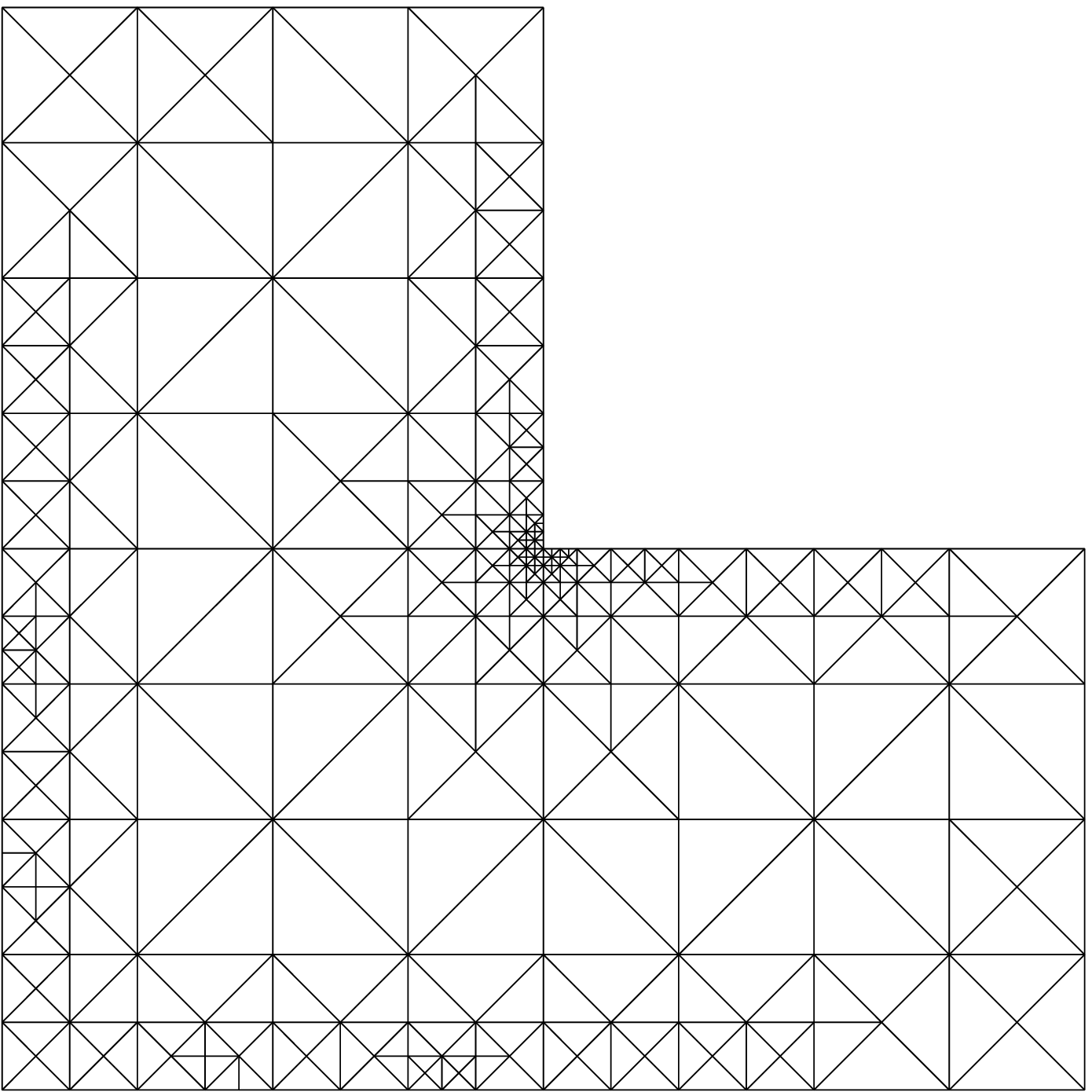}}\\

\resizebox{3.3cm}{!}{\includegraphics*{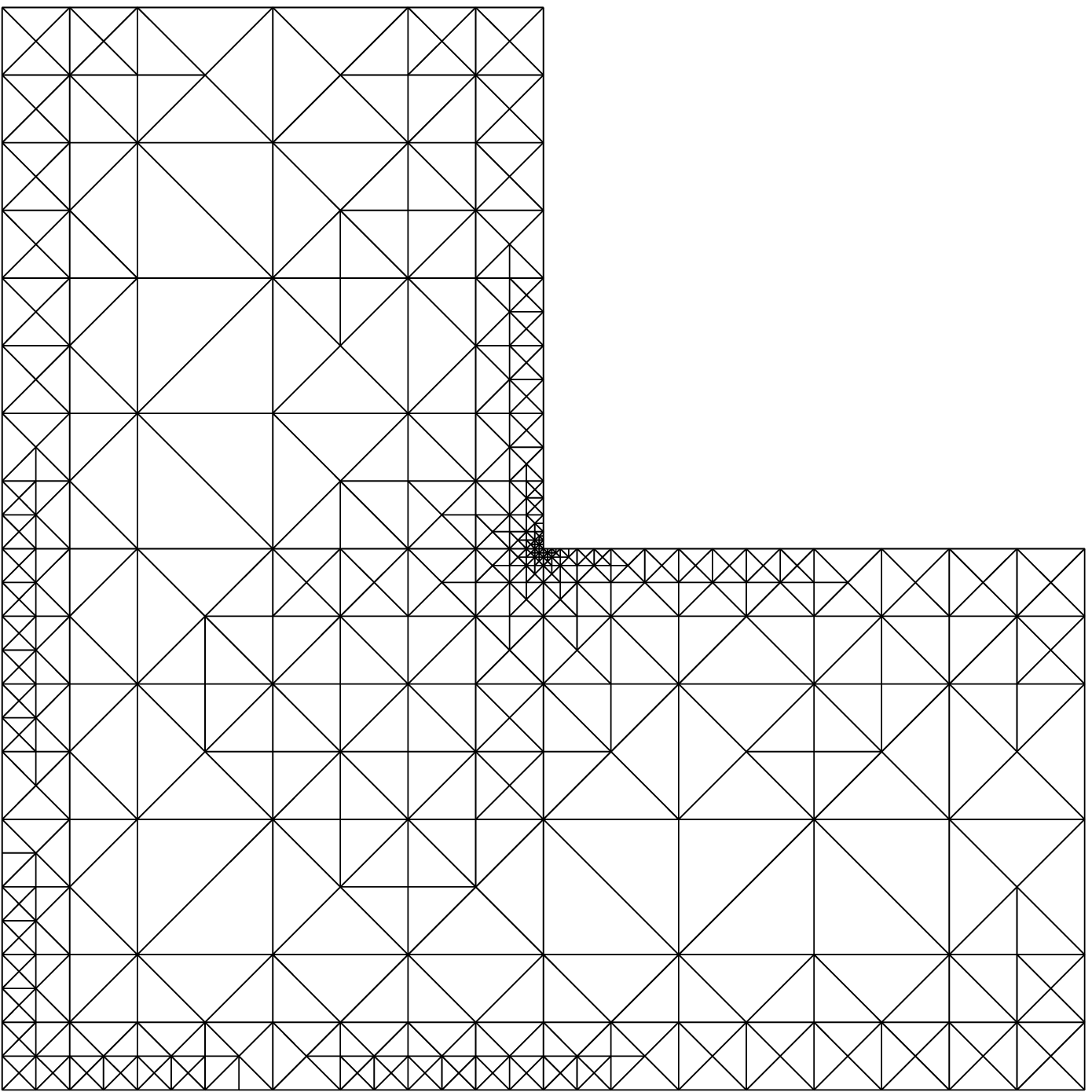}}\qquad
\resizebox{3.3cm}{!}{\includegraphics*{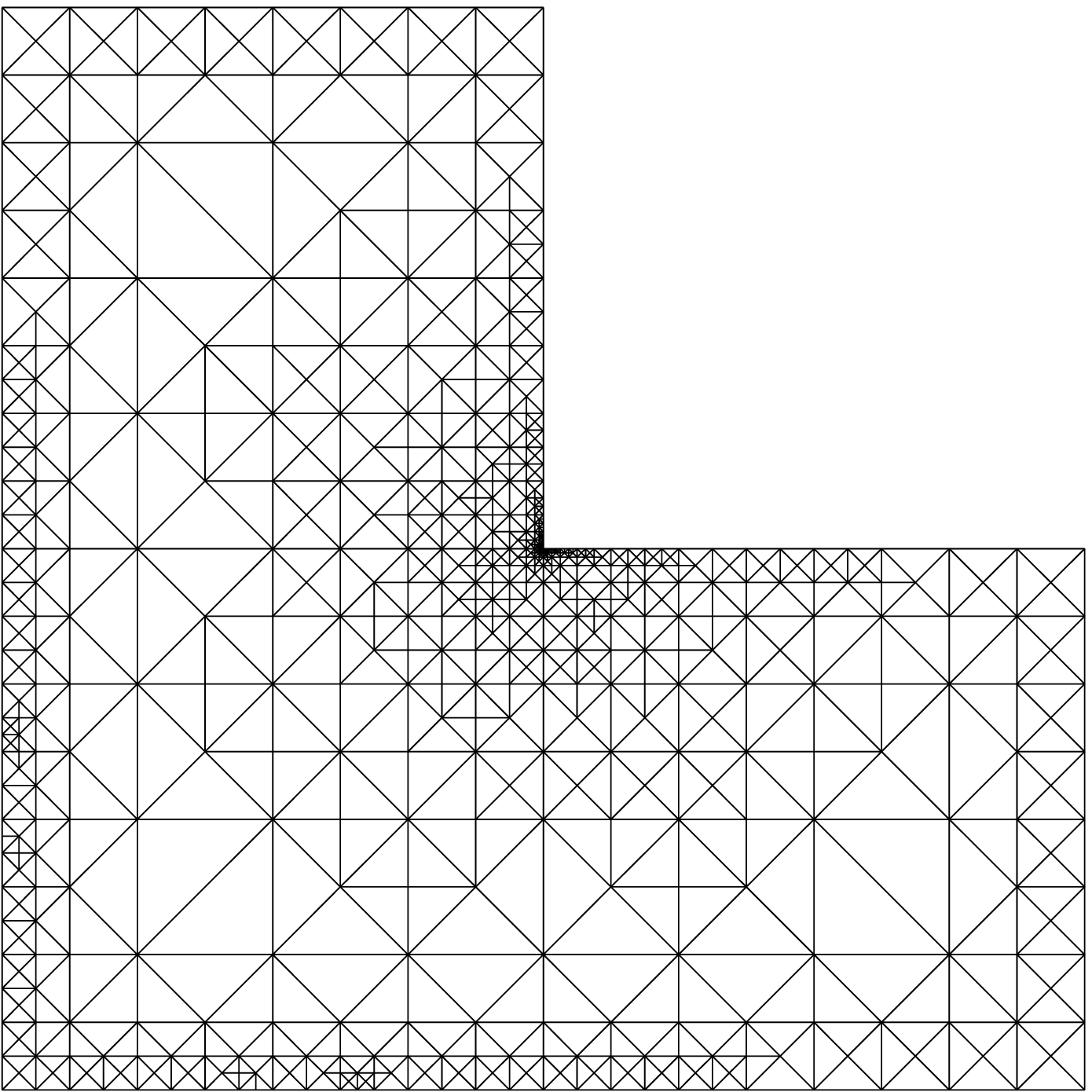}}\qquad
\resizebox{3.3cm}{!}{\includegraphics*{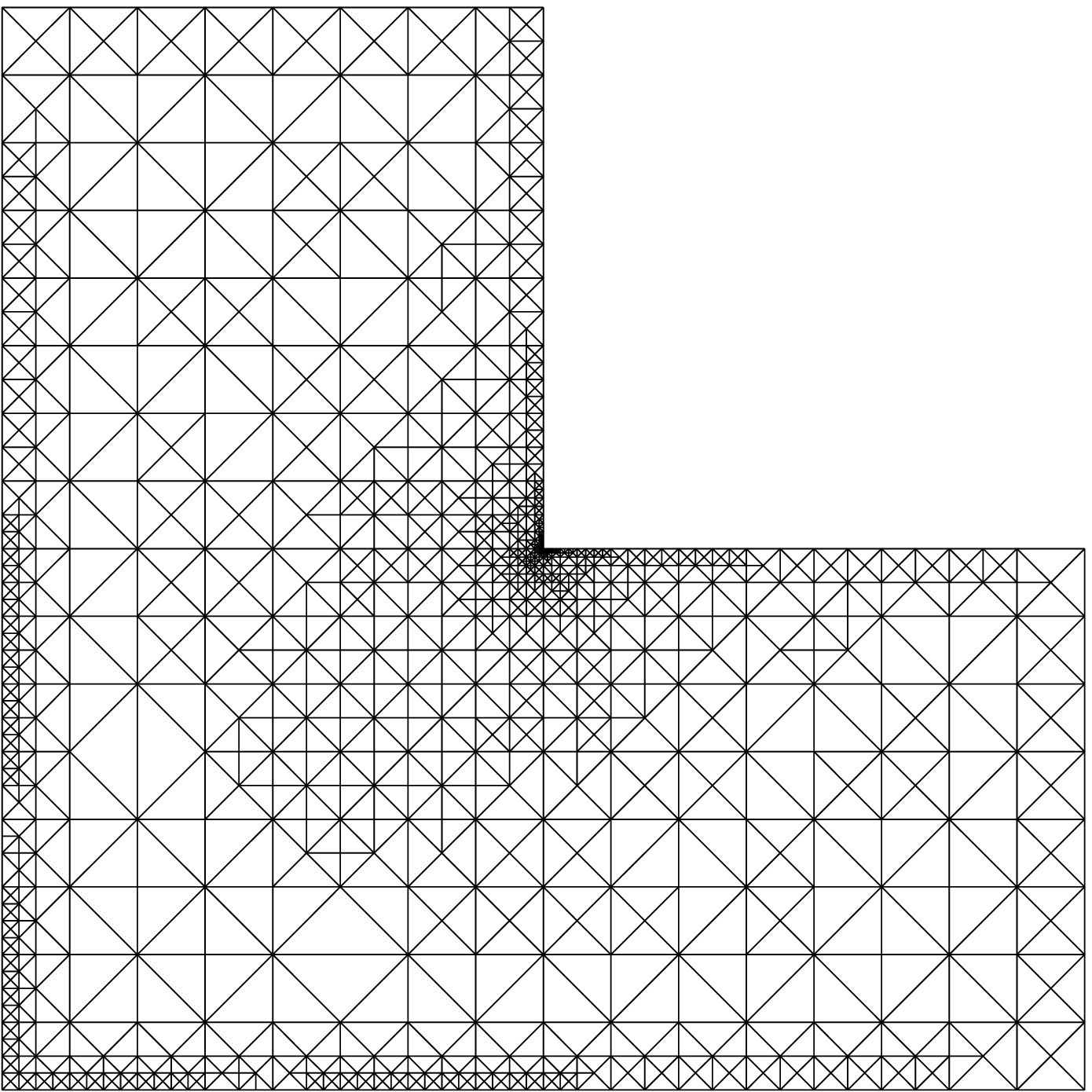}}\\

\resizebox{3.3cm}{!}{\includegraphics*{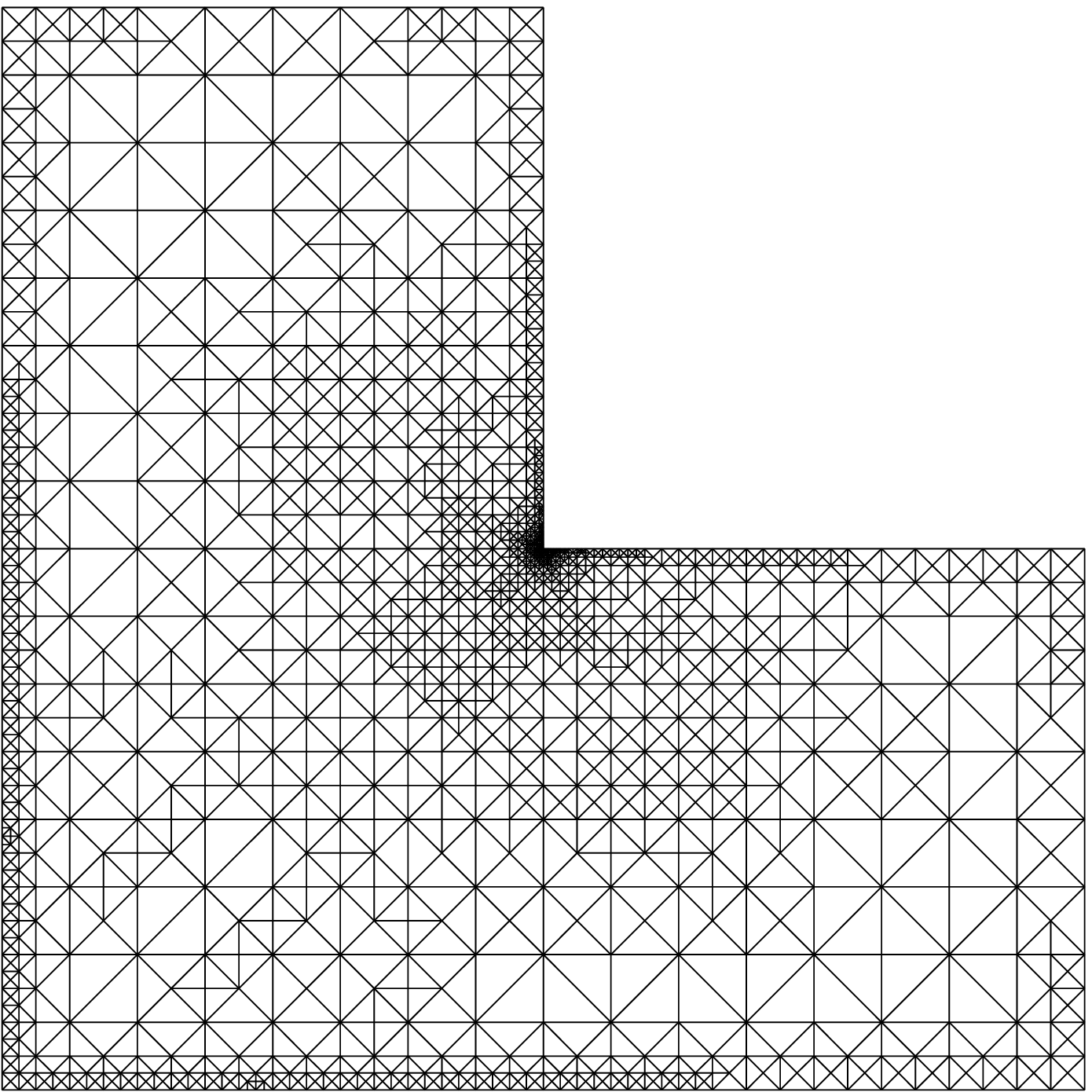}}\qquad
\resizebox{3.3cm}{!}{\includegraphics*{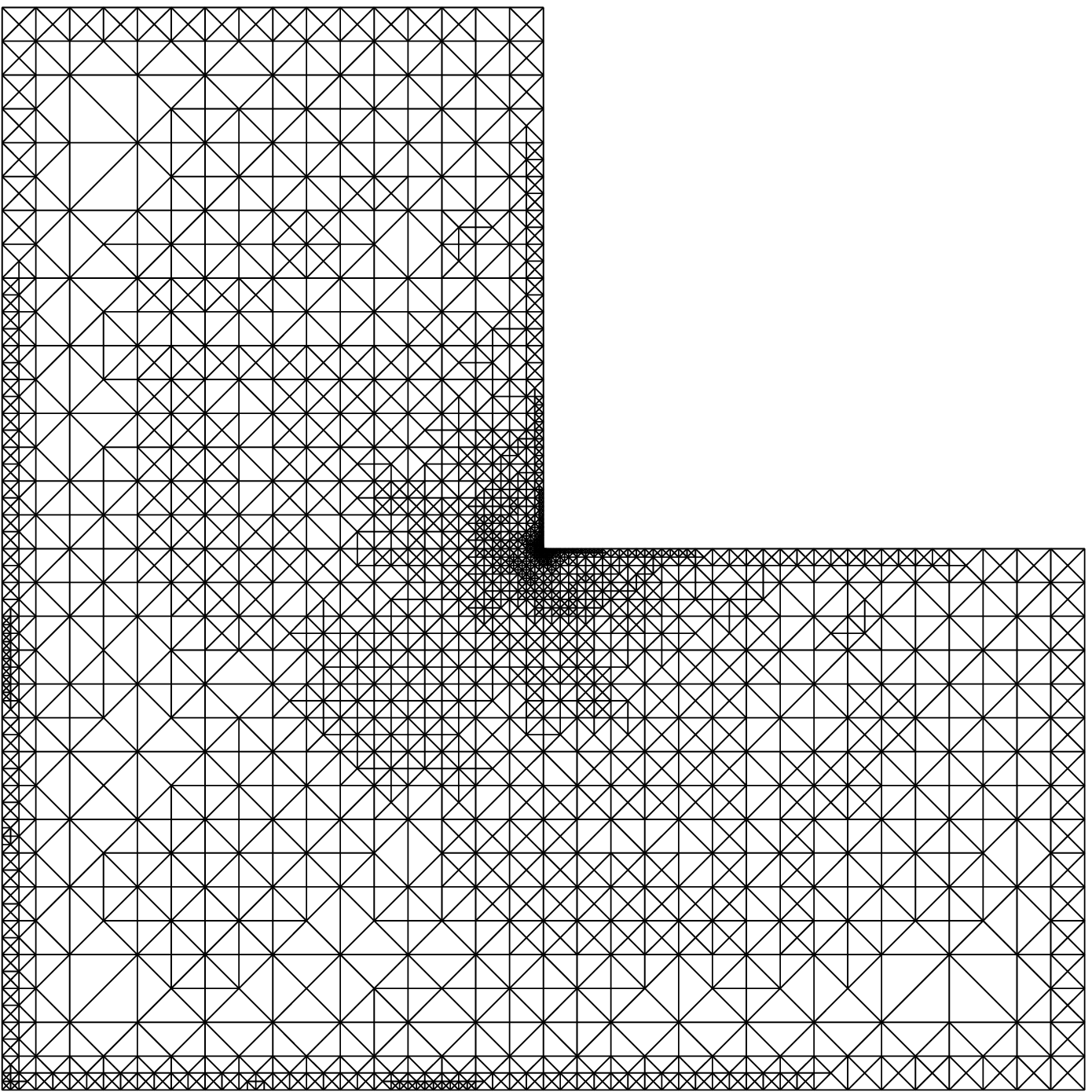}}\qquad
\resizebox{3.3cm}{!}{\includegraphics*{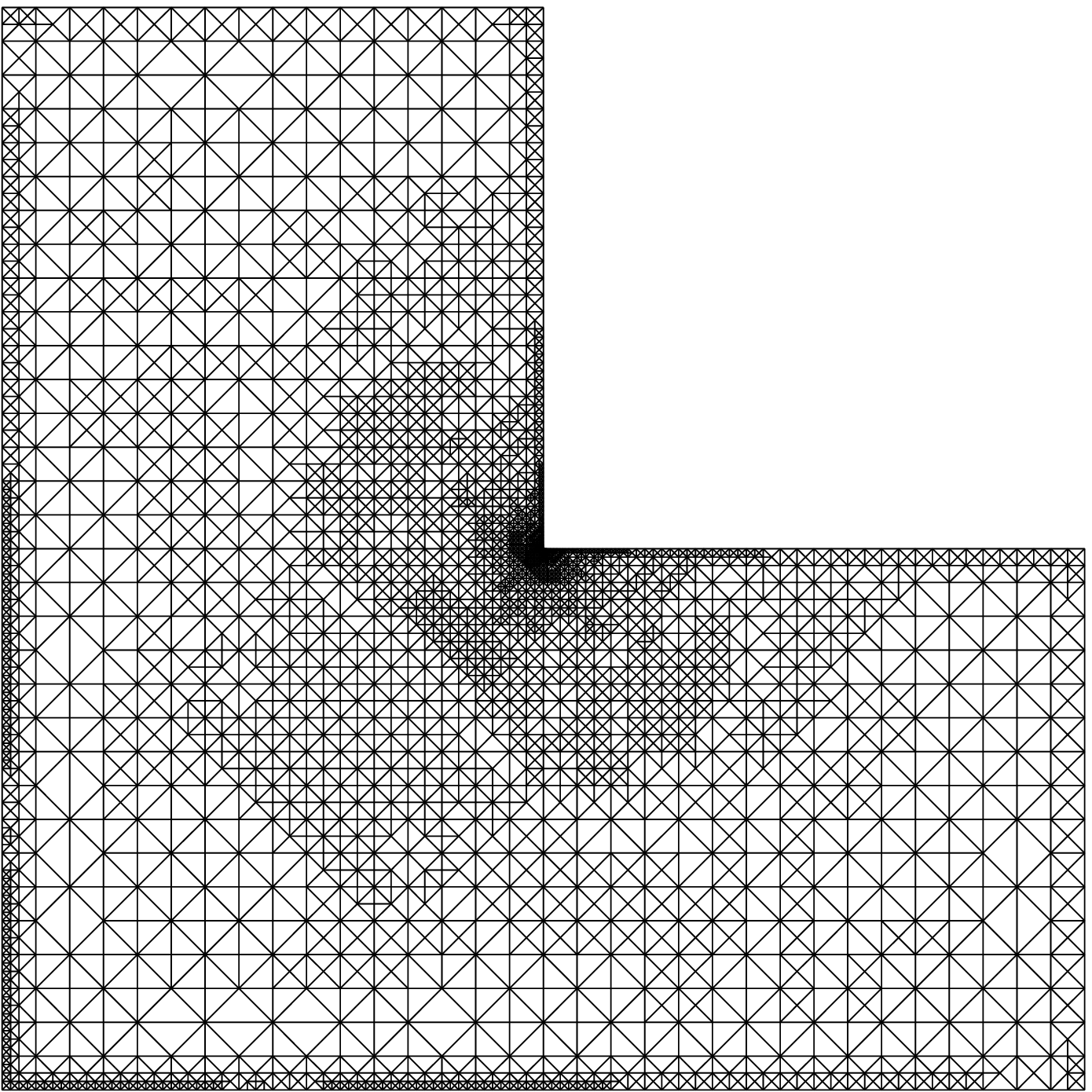}}



\caption{\label{fig:meshes} The first 12 meshes generated by the
  adaptive refinement algorithm}
\end{figure}

\end{document}